\newcommand{\lan}{{\mathcal L}}
\newcommand\var{\text{var}}
\newcommand\vw{\nu_{\omega}}
\newcommand\aw{\alpha_{\omega}}
\newcommand\vl{\underline{v}}
\newcommand\wl{\underline{w}}
\newcommand\dl{{\underline{\dim}_{\rm loc}}}
\newcommand\du{{\overline{\dim}_{\rm loc}}}
\newcommand\dm{{{\dim}_{\rm loc}}}
\newcommand\El{\underline{E}}
\newcommand\supp{\mathrm{supp}}
\renewcommand\widering[1]{\ring{#1}}
\newcommand\N {{\Bbb N}}
\newtheorem{theorem}{Theorem}[section]
\newtheorem{corollary}{Corollary}
\newtheorem{lemma}[theorem]{Lemma}
\newtheorem{proposition}{Proposition}
\theoremstyle{definition}
\newtheorem{definition}[theorem]{Definition}
\newtheorem{remark}{Remark}
\newtheorem{fact}{Fact}
\title[Inverse measures of random weak Gibbs measures] 
{Multifractal formalism for inverse measures of random weak Gibbs measures}
\author[Zhihui YUAN]{}
\subjclass[2010]{Primary:37C45; Secondary:37Hxx,28A78.}
\keywords{Multifractal, Hausdorff dimension, local dimension, random dynamical attractor, random weak Gibbs measure, inverse measure, conditioned ubiquity.}
\email{yzhh@hust.edu.cn}
\begin{document}
\maketitle

\centerline{\scshape Zhihui Yuan}
\medskip

\begin{abstract}
	Any Borel probability measure supported on a Cantor set of zero Lebesgue measure on the real line possesses a discrete inverse measure. We study the validity of the multifractal formalism for the inverse measures of random weak Gibbs measures supported on  the attractor associated with some $C^1$ random dynamics encoded by a random subshift of finite type, and expanding in the mean. The study requires, in particular, to develop in this context of random dynamics a suitable extension of the results known for heterogeneous ubiquity associated with deterministic Gibbs measures.
\end{abstract}

\section{Introduction} This paper investigates the validity of the multifractal formalism for discrete measures naturally arising in some random dynamical systems, namely the inverse measures of random weak Gibbs measures supported on random dynamical attractors in the line, to be defined below.

Let us recall the basic framework of multifractal formalism. Given a positive, finite, and compactly supported  Borel measure $\mu$ on $\mathbb{R}$, whose topological support is denote by $\supp(\mu)$, one defines its $L^q$-spectrum  $\tau_{\mu}:\mathbb{R}\rightarrow\mathbb{R}\cup\{-\infty\}$ by
\begin{equation}
\tau_{\mu}(q)=\liminf_{r\rightarrow 0}\frac{\log \sup\{\sum_{i}(\mu(B_{i}))^q\}}{\log (r)},
\end{equation}
where the supremum is taken over all families of disjoint closed balls $B_i$ of radius $r$ with centers in $\text{supp}(\mu)$.
One also defines for any $x\in \mathrm{supp}(\mu)$ the lower local dimension and the upper local dimension of $\mu$ at $x$ by
$$\dl(\mu,x)=\liminf_{r\to 0^+}\frac{\log \mu(B(x,r))}{\log r} \text{ and } \du(\mu,x)=\limsup_{r\to 0^+}\frac{\log \mu(B(x,r))}{\log r},$$
and the local dimension of $\mu$ at $x$ by  $\dm(\mu,x)=\dl(\mu,x)=\du(\mu,x)$ if the last equality holds. Then, one considers the level sets associated with these quantities, i.e., for $d\in \mathbb{R}$, the sets
\begin{align*}
\El(\mu,d)&=\{x\in \supp(\mu) :\dl(\mu,x)=d\},\\
\overline{E}(\mu,d)&=\{x\in \supp(\mu) :\du(\mu,x)=d\},\\
E(\mu,d)&=\El(\mu,d)\cap\overline{E}(\mu,d).\\
\end{align*}
One also defines the lower Hausdorff dimension of $\mu$:
$$\dim_H(\mu)=\sup\{s: \dl(\mu,x)\geq s\text{ for }\mu\text{-almost every }x\in \supp(\mu) \}.$$
An equivalent definition is (see~\cite[chapter 10]{Falconer1}) $\dim_H(\mu)=\inf\{\dim_H E: \,E\text{ Borel set},\, \mu(E)>0\}.
$
One always has, for all $d\in\mathbb{R}$ (see \cite{Olsen1,LN})
\begin{align}
\nonumber\dim_H E(\mu,d)&\le \min (\dim_H \El(\mu,d), \dim_H \overline E(\mu,d))\\
\label{MFI}&\le  \max (\dim_H \El(\mu,d), \dim_H \overline E(\mu,d))\le \tau_\mu^*(d),
\end{align}
where $\dim_H$ stands for the Hausdorff dimension, a negative dimension means that the set is empty, and we recall that the Legendre transform $f^*$ of any function $f:\mathbb{R}\to \mathbb{R}\cup\{-\infty\}$ with non-empty domain is defined on $\mathbb{R}$  by
$$f^*(d)=\inf_{q\in \mathbb{R}}\{d q-f(q)\}\in \mathbb{R}\cup\{-\infty\}.$$
The Hausdorff spectrum of $\mu$ is defined by
$$
d\in\mathbb{R} \mapsto \dim_H E(\mu,d),
$$
while the lower and upper Hausdorff spectra are defined similarly with the sets $\El(\mu,d)$ and $\overline E(\mu,d)$ respectively, in place of $E(\mu,d)$.

\begin{definition}
	One says that the multifractal formalism holds on a subset $I$ of $\mathbb{R}$ if $
	\dim_H \El(\mu,d)=\tau_\mu^*(d)$ for all  $d\in I$ and that it holds strongly on $I$ if
	$
	\dim_H E(\mu,d)=\tau_\mu^*(d)$ for all  $d\in I
	$. If $I=\mathbb{R}$, one simply says that the formalism holds.
\end{definition}
The study of the validity of the multifractal formalism for discrete measures started in \cite{MR1997-1}, and was developed further in~\cite{RM1998}. In these papers, the authors study the relations between the multifractal behavior of a Borel probability measure  supported  on $[0,1]$ and its inverse measure defined as follows:
\begin{definition}
	Let $\mu$ be a Borel probability measure supported  on $[0,1]$, and let $F_{\mu}$ be its distribution function, i.e. $F_{\mu}(t)=\mu([0,t])$. The  inverse measure $\nu$ of $\mu$ is the unique Borel probability measure on $[0,1]$ such that $\text{for all } x\in [0,1], F_{\nu}(x)=\sup\{t\in[0,1];F_{\mu}(t)\leq x\}.$
\end{definition}
The authors of \cite{RM1998} use  $\lim_{I\to\{x\}} \frac{\log(\mu(I))}{\log(|I|)}$ as a definition of  the local dimension, where~$I$ is a non trivial interval containing~$x$.  With this definition,  they observe that for a Gibbs measure on a cookie-cutter set, while it is well known  that the strong multifractal formalism holds, they can establish its failure on a non trivial interval for the discrete inverse measure of such a measure (they obtained the same type of failure for discrete in-homogeneous self-similar measures, see also \cite{OlSni}). Later,  the validity of the multifractal formalism as defined above was obtained in \cite{BS2009}, where the authors used the so-called heterogeneous, or conditioned, ubiquity theory, which combines ergodic theory and metric approximation theory, and was developed in~\cite{BS2007}. This tool makes it possible to study  a broad class of multifractal discrete measures~\cite{BS2004,BS2008}.

It is worth mentioning that the multifractal analysis of discrete measures via the Hausdorff dimensions of the level sets $\underline E(\mu,d)$, and with no consideration of multifractal formalism, started with homogeneous sums of Dirac masses \cite{AB1990,Jaffard1996,Jaffard1999,Falconer2000}, in particular the derivative of L\'evy subordinators \cite{Jaffard1999}, and that originally heterogeneous ubiquity was elaborate with the multifractal analysis of L\'evy processes in multifractal time as a target~\cite{BSadv}.

In \cite{Yuan2016}, we consider, on a base probability space $(\Omega,\mathcal F,\mathbb{P},\sigma)$, random weak Gibbs measures $\{\mu_{\omega}\}_{\omega\in\Omega}$ on some class of attractors $\{X_{\omega}\}_{\omega\in\Omega}$ included in $[0,1]$ and associated with $C^1$ random dynamics conjugate (up to countably many points), or semi-conjugate to a random subshift of finite type. We provide a study of the multifractal nature of these measures, including the validity of the strong multifractal formalism, the calculation of Hausdorff and packing dimensions of the so-called level sets of divergent points,  and a $0$-$\infty$ law for the Hausdorff and packing measures of the level sets of the local dimension.

In the present work, we study the multifractal nature of the discrete measures obtained as the inverse measures of the random weak Gibbs measures $\{\mu_{\omega}\}_{\omega\in\Omega}$, when the attractors have zero Lebesgue measure.
The precise definitions of these objects and our main result, theorem~\ref{multifractal inverse}, are exposed in the next section. Let us just mention that  the randomness and  the fact that we work on a subshift rather than  a fullshift are two sources of serious complications with respect to the study achieved for inverse measures of deterministic Gibbs measures in \cite{BS2009}.  In particular, the fundamental geometric tool provided by \cite{BS2007} must be revisited.

\section{Setting and main result}\label{section:Setting and main results}
We first need to expose basic facts from random dynamical systems and thermodynamic formalism.

\subsection{Random subshift and random weak Gibbs measures}\label{subsection: Random subshift, relativized entropy, topological pressure  and weak Gibbs measures}
\medskip
{\bf Random subshift.} 
Denote by $\Sigma$ the symbolic space $({\mathbb Z^+})^\N$, and endow it with the standard ultrametric distance: for any $\underline u=u_0u_1\cdots$ and $\vl=v_0v_1\cdots$ in $\Sigma$, $d(u,v)=e^{-\inf\{n\in\mathbb N: \ u_n\neq v_n\}}$, with the convention $\inf (\emptyset)=+\infty$. Let $(\Omega,\mathcal{F},\mathbb{P})$ be a complete probability space and $\sigma$  a $\mathbb{P}$-preserving  ergodic map. The product space $\Omega\times \Sigma$ is endowed with the $\sigma$-field $\mathcal F\otimes \mathcal B(\Sigma)$, where $\mathcal B(\Sigma)$ stands for the Borel $\sigma$-field of $\Sigma$.

Let $l$ be a $\mathbb{Z}^+$ valued random variable such that
$
\int\log (l)\, \mathrm{d}\mathbb{P}<\infty$ and $ \mathbb{P}(\{\omega\in\Omega: l(\omega)\geq 2\})>0.
$
Let $A=\{A(\omega)=(A_{r,s}(\omega)):\omega\in\Omega\}$ be  a random transition matrix such that $A(\omega)$ is a $l(\omega)\times l(\sigma\omega)$-matrix with entries 0 or 1. We  suppose that the map $\omega\mapsto A_{r,s} (\omega)$ is measurable for all $(r,s)\in \mathbb{Z}^+\times \mathbb{Z}^+$ and each $A(\omega)$  has at least one non-zero entry in each row and each column.
Let $\Sigma_{\omega}=\{\vl=v_0v_1\cdots;1\leq v_k\leq l(\sigma^{k}(\omega))\text{ and } A_{v_k,v_{k+1}}(\sigma^{k}(\omega))=1\ \text{for all } k\in\mathbb{N} \},$
and $F_{\omega}:\Sigma_{\omega}\rightarrow\Sigma_{\sigma\omega}$ be the left shift $(F_{\omega}\vl)_i=v_{i+1}$ for any $\vl=v_0v_1\cdots\in \Sigma_{\omega}$. Define $\Sigma_{\Omega}=\{(\omega,\vl):\omega\in\Omega, \vl\in\Sigma_\omega\}$
which is endowed with the $\sigma$-field  obtained as the trace of $\mathcal F\otimes \mathcal B(\Sigma)$.
Define the map $F:\Sigma_{\Omega}\to \Sigma_{\Omega}$  as $F((\omega,\vl))=(\sigma\omega,F_{\omega}\vl)$. The corresponding family $\tilde{F}=\{F_{\omega}:\omega\in \Omega\}$ is called a random subshift. We assume that this  random subshift  is topologically mixing, i.e.  there exists a $\mathbb{Z}^+$-valued r.v. $M$ on $(\Omega,\mathcal{F},\mathbb{P})$ such that for $\mathbb{P}$-almost every (a.e.) $\omega$,
$
A(\omega)A(\sigma\omega)\cdots A(\sigma^{M(\omega)-1}\omega)$ is positive.

For each $n\ge 1$, define $\Sigma_{\omega,n}$ as the set of words $v=v_0v_1\cdots v_{n-1}$ of length $n$, i.e. such that
$1\leq v_k\leq l(\sigma^{k}(\omega))$ for all  $0\leq k\leq n-1$
and $A_{v_k,v_{k+1}}(\sigma^{k}(\omega))=1$ for all $ 0\leq k\leq n-2$.
Define $\Sigma_{\omega,*}=\cup_{n\in\mathbb{N}}\Sigma_{\omega,n}$.
For $v=v_0v_1\cdots v_{n-1}\in\Sigma_{\omega,n}$,  we write $|v|$ for the length $n$ of $v$, and we define the cylinder $[v]_{\omega}$ as
$[v]_{\omega}:=\{\wl\in\Sigma_\omega:w_i=v_i\text{ for }i=0,\dots,n-1\}.$

For any $s\in \Sigma_{\omega,1}$, $p\geq M(\omega)$ and $s'\in \Sigma_{\sigma^{p+1}\omega,1}$, there is  at least one word $v(s,s')\in \Sigma_{\sigma\omega,p-1}$ such that $sv(s,s')s'\in \Sigma_{\omega,p+1}$. We fix such a $v(s,s')$ and denote the word $sv(s,s')s'$ by $s\ast s'$. Similarly, for any $w=w_0w_1\cdots w_{n-1}\in\Sigma_{\omega,n}$ and $w'=w'_0w'_1\cdots w'_{m-1}\in\Sigma_{\sigma^{n+p}\omega,m}$ with $n,p,m\in\mathbb{N}$ and $p\geq M(\sigma^{n-1}\omega)$, we  fix $v(w_{n-1},w'_{0})\in \Sigma_{\sigma^n\omega, p}$ (a word depending on  $w_{n-1}$ and $w'_{0}$ only) so that  $w\ast w':=w_0w_1\cdots w_{n-1}v(w_{n-1},w'_{0})w'_0w'_1\cdots w'_{m-1}\in\Sigma_{\omega,n+m+p-1}$.

\medskip

{\bf Random weak Gibbs measures.} We say that a measurable function $\Phi$ on $\Sigma_{\Omega}$ is in $\mathbb{L}^1_{\Sigma_{\Omega}}(\Omega,C(\Sigma))$ if
\begin{enumerate}
	\item \begin{equation}\label{int}
	C_{\Phi}=:\int_{\Omega} \|\Phi(\omega)\|_{\infty}\, \mathrm{d}\mathbb{P}(\omega)<\infty,
	\end{equation}
	where
	$
	\|\Phi(\omega)\|_{\infty}=:\sup_{\underline{v}\in \Sigma_{\omega}}|\Phi(\omega,\vl)|$,
	\item 
	for $\mathbb{P}$-a. e. $\omega$,  $\text{var}_n\Phi(\omega)\to 0$ as $n\to\infty$,
	where
	$$
	\text{var}_n\Phi(\omega) = \sup\{|\Phi(\omega,\vl)-\Phi(\omega,\wl)| : v_i = w_i, \forall i < n\}.
	$$
\end{enumerate}

Now, if $\Phi\in \mathbb{L}^1_{\Sigma_{\Omega}}(\Omega,C(\Sigma))$, due to Kingsman's subadditive ergodic theorem,
$$P(\Phi)=\lim_{n\to\infty}\frac{1}{n}\log \sum_{v\in\Sigma_{\omega,n}}\sup_{\underline v\in[v]_\omega}\exp\left(S_n\Phi(\omega,\underline v)\right )$$
exists for $\mathbb{P}$-a.e.  $\omega$ and does not depend on $\omega$, where $S_n\Phi(\omega,\underline v)=\sum_{i=0}^{n-1}\Phi(F^i(\omega,\underline v))$.  This limit is called {\it topological pressure of $\Phi$}.

Also, with $\Phi$ is associated
the Ruelle-Perron-Frobenius  operator $\lan_{\Phi}^\omega: C^0(\Sigma_{\omega})\to C^0(\Sigma_{\sigma\omega})$ defined as
$$\lan_{\Phi}^\omega h(\vl)=\sum_{F_{\omega}\wl=\vl}\exp(\Phi(\omega,\wl))h(\wl),\ \ \forall\ \vl\in \Sigma_{\sigma\omega}.$$
\begin{proposition}\label{eigen}\cite{Kifer1,MSU}
	Removing from $\Omega$ a set of $\mathbb{P}$-probability 0 if necessary, for all  $\omega\in \Omega$ there exists $\lambda(\omega)=\lambda^{\Phi}(\omega)>0$ and  a probability measure $\widetilde \mu_{\omega}=\widetilde\mu^\Phi_\omega$ on $\Sigma_{\omega}$ such that $(\mathcal{L}_{\Phi}^\omega)^*\widetilde \mu_{\sigma\omega}=\lambda(\omega)\widetilde \mu_{\omega}$.
\end{proposition}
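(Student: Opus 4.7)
The plan is to establish a random Perron--Frobenius--Ruelle theorem by constructing, in a measurable way, a family of probability measures that is fixed (up to scalars) by the dual cocycle $\{(\lan_{\Phi}^\omega)^*\}_{\omega\in\Omega}$. Since $(\lan_{\Phi}^\omega)^*$ sends measures on $\Sigma_{\sigma\omega}$ to measures on $\Sigma_\omega$, finding $\{\widetilde\mu_\omega\}_\omega$ satisfying $(\lan_{\Phi}^\omega)^*\widetilde\mu_{\sigma\omega}=\lambda(\omega)\widetilde\mu_\omega$ is a fiberwise fixed-point problem along the base dynamics $\sigma$.

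To set this up, I would introduce the fiberwise normalized map
$$T_\omega(\nu)=\frac{(\lan_{\Phi}^\omega)^*\nu}{\int \lan_{\Phi}^\omega \mathbf{1}\,\mathrm d\nu},\qquad \nu\in\mathcal P(\Sigma_{\sigma\omega}),$$
which is well defined and weak-$*$ continuous: the assumption that every column of $A(\omega)$ has a non-zero entry guarantees that $\lan_{\Phi}^\omega \mathbf{1}$ is strictly positive and continuous on $\Sigma_{\sigma\omega}$. A solution of the eigenmeasure equation corresponds to a family $\{\widetilde\mu_\omega\}$ with $T_\omega(\widetilde\mu_{\sigma\omega})=\widetilde\mu_\omega$, and the eigenvalue is given by the strictly positive, measurable quantity $\lambda(\omega)=\int \lan_{\Phi}^\omega \mathbf{1}\,\mathrm d\widetilde\mu_{\sigma\omega}$. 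I would then pick a measurable family of seed probabilities $\{\nu_\omega\}$ (e.g.\ Dirac masses at a measurable selection of a point in $\Sigma_\omega$) and define
$$\mu_\omega^{(n)}=T_\omega\circ T_{\sigma\omega}\circ\cdots\circ T_{\sigma^{n-1}\omega}(\nu_{\sigma^n\omega}),\qquad n\ge 1.$$

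The analytic core would be to prove, for $\mathbb P$-a.e.\ $\omega$, a random weak Gibbs-type two-sided estimate
$$\mu_\omega^{(n)}([v]_\omega)=\exp\bigl(S_k\Phi(\omega,\vl)-k\,P(\Phi)+o(k)\bigr)$$
uniform in $\vl\in[v]_\omega$ and for $n\gg k$, with an $\omega$-dependent $o(k)$. Its derivation combines three ingredients: (i) the variational identity $P(\Phi)=\lim \tfrac{1}{n}\log\|\lan^\omega_{S_n\Phi}\mathbf{1}\|_\infty$ already provided in the excerpt via Kingman's subadditive ergodic theorem; (ii) a sublinear bound on the cumulative distortion $\sum_{i=0}^{k-1}\var_{n-i}\Phi(\sigma^i\omega)$, obtained by combining $\var_n\Phi(\omega)\to 0$ with the Birkhoff ergodic theorem applied to the functions $\omega\mapsto \var_j\Phi(\omega)$; and (iii) the concatenation operation $w\ast w'$ built from the random mixing gap $M(\omega)$, used to compare cylinder masses at different scales and produce the lower bound. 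Once these estimates are in hand, cylinder masses of $\mu_\omega^{(n)}$ converge as $n\to\infty$, yielding a weak-$*$ limit $\widetilde\mu_\omega$ that inherits the fixed-point relation $T_\omega(\widetilde\mu_{\sigma\omega})=\widetilde\mu_\omega$. Measurability of $\omega\mapsto\widetilde\mu_\omega$ follows from measurability at each finite stage $n$ and pointwise convergence on a countable generating family of cylinders.

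The main obstacle I anticipate is precisely the distortion control in step (ii)--(iii). Unlike in the deterministic setting, the operator $\lan_{\Phi}^\omega$ does not contract in a fixed Hilbert projective metric: the relevant positive cone and its projective diameter depend on $\omega$, and the mixing time $M(\omega)$ and alphabet size $l(\omega)$ are themselves unbounded random variables. Reconciling the possibly erratic $\omega$-dependence of these quantities with the uniform sublinearity required for the weak Gibbs estimate is the delicate point, and it is here that the integrability $\int\log l\,\mathrm d\mathbb P<\infty$, the ergodicity of $\sigma$, the assumption $\var_n\Phi(\omega)\to 0$, and the topological mixing hypothesis must all be used simultaneously to tame the random constants along the orbit $\{\sigma^n\omega\}_{n\ge 0}$.
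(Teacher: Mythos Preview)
The paper does not prove this proposition: it is quoted as a known result with citations to \cite{Kifer1,MSU}, and no argument is given in the text. So there is no ``paper's own proof'' to compare against.

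That said, your sketch is a faithful outline of how the cited references establish such random eigenmeasure theorems. The normalized dual map $T_\omega$, the pull-back iteration $\mu_\omega^{(n)}=T_\omega\circ\cdots\circ T_{\sigma^{n-1}\omega}(\nu_{\sigma^n\omega})$, and the passage to a weak-$*$ limit are exactly the mechanism used by Kifer and by Mayer--Skorulski--Urba\'nski. One remark: in those references the convergence of $\mu_\omega^{(n)}$ is typically obtained not by first proving full two-sided weak Gibbs bounds (your step (i)--(iii)), but more directly by controlling the projective diameter of the image of the positive cone under the iterated dual operator, using the random mixing time $M(\omega)$ to get eventual contraction. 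The weak Gibbs property you describe is then a \emph{consequence} of the eigenmeasure equation rather than an input to its construction; this is also how the present paper uses it (Proposition~\ref{for n}(ii) is stated for the already-constructed $\widetilde\mu_\omega$). Your route would work too, but it front-loads the hardest estimates.
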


\begin{proposition}\label{eigencon}\cite{Kifer1,MSU,Yuan2016}
	For $\mathbb{P}$-a.e. $\omega\in\Omega$, $$\lim_{n\to\infty}\frac{\log(\prod_{i=0}^{n-1}\lambda^{\Phi}(\sigma^i\omega))}{n}=P(\Phi).$$
\end{proposition}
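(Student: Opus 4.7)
The plan is to sandwich $\Lambda_n(\omega):=\prod_{i=0}^{n-1}\lambda^{\Phi}(\sigma^i\omega)$ against the partition function $Z_n(\omega):=\sum_{u\in\Sigma_{\omega,n}}\sup_{\underline v\in[u]_\omega}\exp(S_n\Phi(\omega,\underline v))$, whose normalized logarithm converges to $P(\Phi)$ by definition. Iterating the dual eigenvalue identity of Proposition~\ref{eigen} yields $(\lan_{\Phi}^{\sigma^{n-1}\omega}\circ\cdots\circ\lan_{\Phi}^{\omega})^*\widetilde\mu_{\sigma^n\omega}=\Lambda_n(\omega)\widetilde\mu_{\omega}$; pairing both sides with the constant function $\mathbf{1}$ on $\Sigma_\omega$ and unfolding the transfer operator gives
$$\Lambda_n(\omega)=\int_{\Sigma_{\sigma^n\omega}}\sum_{u\in\Sigma_{\omega,n}:\,u\vl\in\Sigma_\omega}\exp\bigl(S_n\Phi(\omega,u\vl)\bigr)\,\mathrm d\widetilde\mu_{\sigma^n\omega}(\vl),$$
which is the master identity driving the two estimates below.

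From this formula, the upper bound $\limsup_n \tfrac{1}{n}\log\Lambda_n(\omega)\le P(\Phi)$ follows at once: each integrand term is at most $\sup_{\underline v\in[u]_\omega}\exp(S_n\Phi(\omega,\underline v))$, and dropping the admissibility constraint on $u$ together with $\widetilde\mu_{\sigma^n\omega}(\Sigma_{\sigma^n\omega})=1$ gives $\Lambda_n(\omega)\le Z_n(\omega)$. For the reverse inequality I would exploit topological mixing: setting $p=p(n):=M(\sigma^{n-1}\omega)$, for every $u\in\Sigma_{\omega,n}$ and every $\vl\in\Sigma_{\sigma^{n+p}\omega}$ the mixing assumption allows one to insert a gluing word between $u_{n-1}$ and the first letter of $\vl$, producing an admissible $w(u,\vl)\in\Sigma_{\omega,n+p}$ with $u$ as prefix and $w(u,\vl)\vl\in\Sigma_\omega$. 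Splitting $S_{n+p}\Phi(\omega,w(u,\vl)\vl)=S_n\Phi(\omega,w(u,\vl)\vl)+\sum_{i=n}^{n+p-1}\Phi(F^i(\omega,w(u,\vl)\vl))$, controlling the first summand on the cylinder $[u]_\omega$ via the oscillation bound $V_n(\omega):=\sum_{i=0}^{n-1}\var_{n-i}\Phi(\sigma^i\omega)$, and bounding the second by $\sum_{i=n}^{n+p-1}\|\Phi(\sigma^i\omega)\|_{\infty}$, summation over $u\in\Sigma_{\omega,n}$ (the $w(u,\vl)$ being distinct since they have distinct prefixes) followed by integration against $\widetilde\mu_{\sigma^{n+p}\omega}$ should yield
$$\Lambda_{n+p}(\omega)\ge\exp\Bigl(-V_n(\omega)-\sum_{i=n}^{n+p-1}\|\Phi(\sigma^i\omega)\|_{\infty}\Bigr)\,Z_n(\omega).$$

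The final task is to verify that all the correction factors and the index shift from $n$ to $n+p$ are subexponential. Under the standard integrability of $M$, Birkhoff's theorem gives $p(n)/n\to 0$ a.s.; applied to $\|\Phi(\cdot)\|_{\infty}$, integrable by~\eqref{int}, it yields $\sum_{i=n}^{n+p-1}\|\Phi(\sigma^i\omega)\|_{\infty}=o(n)$; and applied to $\log\lambda^{\Phi}$, integrable because $|\log\lambda^{\Phi}(\omega)|\le\log l(\omega)+\|\Phi(\omega)\|_{\infty}$, it lets me pass from $\Lambda_{n+p}$ back to $\Lambda_n$ at subexponential cost. The main obstacle is showing $V_n(\omega)/n\to 0$: the pointwise convergence $\var_k\Phi(\omega)\to 0$ gives only a term-wise bound summing to $O(n)$. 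My plan is a Cesaro-style truncation: for any fixed $K$ the head $\sum_{i\le n-K}\var_{n-i}\Phi(\sigma^i\omega)$ is controlled using $\var_{n-i}\Phi\le\var_K\Phi$ for $n-i\ge K$ and Birkhoff applied to the integrable function $\var_K\Phi$, giving a limit $\mathbb E[\var_K\Phi]$ that tends to $0$ with $K$ by dominated convergence (since $\var_K\Phi\le 2\|\Phi\|_{\infty}$ and $\var_K\Phi\to 0$ a.s.), while the length-$K$ tail is absorbed using $\|\Phi(\sigma^i\omega)\|_{\infty}/i\to 0$ a.s. Putting everything together yields $\liminf_n \tfrac{1}{n}\log\Lambda_n(\omega)\ge P(\Phi)$, matching the upper bound.
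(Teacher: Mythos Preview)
The paper does not prove this proposition; it is stated with attribution to \cite{Kifer1,MSU,Yuan2016} and no argument is supplied. Your approach---sandwiching $\Lambda_n(\omega)$ against the partition function $Z_n(\omega)$ via the iterated dual identity $\Lambda_n(\omega)=\int(\lan_{\Phi}^{\sigma^{n-1}\omega}\circ\cdots\circ\lan_{\Phi}^{\omega})\mathbf{1}\,\mathrm d\widetilde\mu_{\sigma^n\omega}$, and using mixing to glue cylinders for the lower bound---is the standard route in those references and is essentially correct. Your master identity, the upper bound $\Lambda_n\le Z_n$, the oscillation estimate via $V_n(\omega)$, and the Ces\`aro truncation argument for $V_n(\omega)/n\to 0$ are all sound; the last is precisely the mechanism behind the paper's later remark (just after Proposition~\ref{for n}) that one may absorb $\sum_{i=0}^{n-1}\var_{n-i}\Upsilon(\sigma^i\omega)$ into $n\,\epsilon(\Upsilon,\omega,n)$.

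There is one genuine soft spot. You write ``under the standard integrability of $M$'' to obtain $p(n)=M(\sigma^{n-1}\omega)=o(n)$, but this paper does \emph{not} assume $\int M\,\mathrm d\mathbb P<\infty$, and without it $M(\sigma^{n-1}\omega)/n\to 0$ may fail. The repair is short and stays within the paper's hypotheses. Since you have shown $|\log\lambda^\Phi(\omega)|\le \log l(\omega)+\|\Phi(\omega)\|_\infty\in L^1(\mathbb P)$, Birkhoff's theorem already guarantees that $L:=\lim_n\frac{1}{n}\log\Lambda_n(\omega)$ exists a.s.; hence the lower bound need only be checked along a subsequence. Fix $N$ with $\mathbb P(M\le N)>0$ (this is how the paper itself handles $M$, e.g.\ in the proof of Lemma~\ref{gap}). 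By ergodicity there are infinitely many $n$ with $M(\sigma^{n-1}\omega)\le N$; since positivity of $A(\omega')\cdots A(\sigma^{M(\omega')-1}\omega')$ persists under right multiplication by further $A$'s (each column has a nonzero entry), your gluing works with the \emph{fixed} gap $p=N$ along this subsequence, giving $\Lambda_{n+N}(\omega)\ge e^{-V_n(\omega)-\sum_{i=n}^{n+N-1}\|\Phi(\sigma^i\omega)\|_\infty}\,Z_n(\omega)$. The correction terms are $o(n)$ (the tail sum has fixed length $N$ and $\|\Phi(\sigma^i\omega)\|_\infty/i\to 0$ by integrability), so $L\ge P(\Phi)$; combined with $\Lambda_n\le Z_n$ this finishes the proof without any integrability assumption on~$M$.
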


We call the  family $\{\widetilde \mu^\Phi_{\omega}:\omega\in\Omega\}$ a random weak Gibbs measure on $\{\Sigma_\omega:\omega\in\Omega\}$ associated with $\Phi$.

\subsection{A model of random dynamical attractor}

We present the model of random dynamical attractor in the real line defined and illustrated in \cite{Yuan2016}. It is more general than examples considered until now in the literature dedicated to multifractal analysis of random Gibbs measures on $\mathbb{R}$ \cite{Kifer2,MSU,FS}.

For any $\omega\in\Omega$, let $U_{\omega}^1=[a_{\omega,1},b_{\omega,1}],U_{\omega}^2=[a_{\omega,2},b_{\omega,2}],\cdots U_{\omega}^{s}=[a_{\omega,s},b_{\omega,s}]\cdots$ be closed non trivial intervals with disjoint interiors and $b_{\omega,s}\le a_{\omega,s+1}$. We assume that for each $s\ge 1$, $\omega\mapsto (a_{\omega,s},b_{\omega,s})$ is measurable, as well as  $a_{\omega,1}\geq 0$ and $b_{\omega,l(\omega)}\leq 1$.  Let $f^s_\omega(x)=\frac{x-a_{\omega,s}}{b_{\omega,s}-a_{\omega,s}}$ and consider a measurable  mapping $\omega\mapsto \mathbf{T}_{\omega}^s$ from $(\Omega,\mathcal F)$ to the space of $C^1$ diffeomorphisms of $[0,1]$ endowed with its Borel $\sigma$-field. We consider  the measurable $C^1$ diffeomorphism  $T_{\omega}^s: U_{\omega}^s \rightarrow [0,1]$ by $T_{\omega}^s=\mathbf{T}_{\omega}^s\circ f^s_\omega$. We denote the  inverse of $T_\omega^s$ by $g_{\omega}^s$. We also define
\begin{eqnarray*}
	U_{\omega}^v&=&g_{\omega}^{v_0}\circ g_{\sigma\omega}^{v_1}\circ\cdots\circ g_{\sigma^{n-1}\omega}^{v_{n-1}}([0,1]),\ \forall v=v_0v_1\cdots v_{n-1}\in\Sigma_{\omega,n},\\
	X_{\omega}&=&\bigcap_{n\geq 1}\bigcup_{v\in\Sigma_{\omega,n}}U_{\omega}^v,\\
	X_{\Omega}&=&\{(\omega,x):\omega\in\Omega, x\in X_{\omega}\},
\end{eqnarray*}
and  for all $\omega\in\Omega$,  $ s\ge 1$ and $x\in U_{\omega}^{s}$,
$$
\psi(\omega,s,x)=-\log|(T_{\omega}^{s})'(x)|.
$$

We say that a measurable function $\widetilde \psi$ defined on $\widetilde{U}_{\Omega}=\{(\omega,s,x):\omega\in \Omega,1\leq s\leq l(\omega),x\in U_{\omega}^s\}$ is in $\mathbb{L}^1_{X_{\Omega}}(\Omega,\widetilde{C}([0,1]))$ if
\begin{enumerate}
	\item 
	$\int_{\Omega}\|\widetilde\psi(\omega)\|_{\infty}\mathrm{d}\mathbb{P}(\omega)<\infty$,
	where
	$\|\widetilde\psi(\omega)\|_{\infty}:=\sup_{1\leq s\leq l(\omega)}\sup_{x\in U_{\omega}^{s}}|\widetilde\psi(\omega,s,x)|$,
	
	\item for $\mathbb{P}$-a.e.   $\omega\in\Omega$, $\text{var}(\widetilde\psi,\omega,\varepsilon) \to 0$ as $\varepsilon\to 0$, where
	$$\text{var}(\widetilde\psi,\omega,\varepsilon)=\sup_{1\leq s\leq l(\omega)}\sup_{x,y\in U_{\omega}^s\text{ and } |x-y|\leq \varepsilon }|\widetilde\psi(\omega,s,x)-\widetilde\psi(\omega,s,y)|.$$
\end{enumerate}

We will make the following assumption:

\medskip $\psi\in\mathbb{L}^1_{X_{\Omega}}(\Omega,\widetilde{C}([0,1]))$ and $\psi$ satisfies the  contraction property in the mean
\begin{equation}\label{cpsi}
c_{\psi}:=-\int_{\Omega}\sup_{1\leq s\leq l(\omega)}\sup_{x\in U_{\omega}^{s}}\psi(\omega,s,x)\mathrm{d}\mathbb{P}(\omega)>0.
\end{equation}

Under this assumption, there is $\mathbb{P}$-almost surely a natural projection $\pi_\omega: \Sigma_{\omega}\rightarrow X_{\omega}$ defined as
$$
\pi_\omega(\underline{v})= \lim_{n\rightarrow\infty}g_{\omega}^{v_0}\circ g_{\sigma\omega}^{v_1}\circ\cdots\circ g_{\sigma^{n-1}\omega}^{v_{n-1}}(0).$$
This mapping may not be injective, but any $x\in X_{\omega}$ has at most two preimages in $\Sigma_{\omega}$. Let
$$
\Psi(\omega,\vl)=\psi(\omega,v_0,\pi(\vl))\text{ for }\vl=v_0v_1\cdots\in\Sigma_{\omega}.
$$
By construction we know $ \Psi\in \mathbb{L}^1_{\Sigma_{\Omega}}(\Omega,C(\Sigma))$.
Using a standard approach, it can be easily proven that for $\mathbb{P}$-a.e. $\omega\in \Omega$, the Bowen-Ruelle formula holds, i.e. $\dim_H X_{\omega}=t_0$ where $t_0$ is the unique root of the equation $P(t\Psi)=0$.

\subsection{Main result}

\medskip

Let $\phi\in \mathbb{L}^1_{X_{\Omega}}(\Omega,\widetilde{C}([0,1]))$
and consider the function
$$
\Phi(\omega,\vl)=\phi(\omega,v_0,\pi(\vl)) \quad (\vl=v_0v_1\cdots\in\Sigma_{\omega}).
$$
We have $\Phi\in\mathbb{L}^1_{\Sigma_{\Omega}}(\Omega,C(\Sigma))$. Let $\mu$ be the random weak Gibbs measure on $\{X_\omega:\omega\in \Omega\}$ obtained as $\mu_{\omega}={\pi_{\omega}}_* \widetilde \mu_\omega:= \widetilde \mu_\omega \circ \pi_\omega^{-1}$, where $\widetilde\mu$ is obtained from proposition~\ref{eigen} with respect to $\Phi$. Without changing the random measures $\widetilde\mu_\omega$ and $\mu_\omega$,  we can  assume $P(\Phi)=0$. Then, due to  equation~\eqref{cpsi}, for any $q\in \mathbb{R}$, there exists a unique ${T}(q)\in\mathbb{R}$ such that
$
P(q\Phi-{T}(q)\Psi)=0,
$
and the mapping $T$ is concave and increasing. In \cite{Yuan2016}, we showed that with $\mathbb P$-probability 1, the strong multifractal formalism holds for $\mu_\omega$ with $\tau_{\mu_{\omega}}=T$.

Now we assume
\begin{equation}\label{cphi}
c_{\phi}:=-\int_{\Omega}\sup_{1\leq s\leq l(\omega)}\sup_{x\in U_{\omega}^{s}}(\phi(\omega,s,x))\mathrm{d}\mathbb{P}(\omega)>0,
\end{equation}
and
\begin{equation}\label{lebesgue measure 0}
\quad\mathbb{P}(\{\omega\in\Omega:\text{ The Lebesgue measure of } X_{\omega}\text{ is equal to }0\})=1.
\end{equation}
The first property ensures that for any $q\in \mathbb{R}$, there exists a unique $\mathcal{T}(q)\in\mathbb{R}$ such that
$
P(q\Psi-\mathcal{T}(q)\Phi)=0$; moreover, the mapping $\mathcal T$ is concave and increasing.
The second property is equivalent to requiring that the inverse measure of $\mu_\omega$ is discrete. We notice that while it is not hard to construct examples with $\dim_H X_\omega<1$ (which implies \eqref{lebesgue measure 0}), no example with $\dim_H X_\omega=1$ and $\mathrm{Leb}(X_\omega)=0$ is known.

\begin{theorem}\label{multifractal inverse}
	For $\mathbb P$-a.e.  $\omega\in\Omega$, let $\nu_{\omega}$ be the inverse measure of $\mu_{\omega}$. We have the following properties:
	\begin{enumerate}
		\item The multifractal formalism holds for $\nu_\omega$, with $\tau_{\nu_{\omega}}=\min(\mathcal{T},0)$.
		
		\item
		\begin{itemize}
			\item For any $d\in [\mathcal{T}'(+\infty),\mathcal{T}'(-\infty)]$, one has
			$\dim_H\overline{E}(\nu_{\omega},d)=\dim_HE(\nu_{\omega},d)=\mathcal{T}^*(d).$ In particular, the strong multifractal formalism holds on the maximal set  $\mathbb{R}\setminus \{d\in\mathbb{R} : \mathcal T^*(d)<(\min(\mathcal{T},0))^*(d)\}$.
			\item For any $d\in (0,\mathcal{T}'(+\infty))$, the sets $\overline{E}(\nu_{\omega},d)$ and $E(\nu_{\omega},d)$ are empty.
			\item For $d=0$,
			$	\overline{E}(\nu_{\omega},0)=E(\nu_{\omega},0)=\{\rm{atoms\  of\  }\nu_\omega\}$.		
		\end{itemize}
	\end{enumerate}
\end{theorem}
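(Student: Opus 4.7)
The plan is to leverage the dual relationship between $\mu_\omega$ and $\nu_\omega$ induced by the distribution function $F_{\mu_\omega}$. Thanks to \eqref{lebesgue measure 0}, the atoms of $\nu_\omega$ are precisely the $F_{\mu_\omega}$-images of the bounded complementary intervals of $X_\omega$, with masses equal to the Lebesgue lengths of those gaps. Combined with the standard bounded distortion and weak Gibbs estimates
\[
\mu_\omega(U_\omega^v) = e^{S_n\Phi(\omega,\vl) + o(n)}, \qquad |U_\omega^v| = e^{S_n\Psi(\omega,\vl) + o(n)},
\]
for cylinders $v\in\Sigma_{\omega,n}$, this dictates that the multifractal analysis of $\nu_\omega$ reduces, at the symbolic level, to the ratio $S_n\Psi/S_n\Phi$ instead of $S_n\Phi/S_n\Psi$; this is the origin of $\mathcal T$ in place of $T$.

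For part~(1), I would compute $\tau_{\nu_\omega}$ in two regimes. For $q$ with $\mathcal T(q)\le 0$, using coverings of $\supp(\nu_\omega)$ by the images $J_\omega^v:=F_{\mu_\omega}(U_\omega^v)$ together with the pressure identity $P(q\Psi-\mathcal T(q)\Phi)=0$, the upper bound follows from the weak Gibbs estimates and the lower bound from a near-optimal packing, yielding $\tau_{\nu_\omega}(q)=\mathcal T(q)$. In the regime $\mathcal T(q)>0$, the discrete nature of $\nu_\omega$ enforces $\tau_{\nu_\omega}(q)=0$: for $q\ge 1$ the total mass bounds the relevant sums, while for intermediate $q$ covering by atoms above a size threshold yields the matching estimates.

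For the upper bounds in part~(2), the inequality $\dim_H E(\nu_\omega,d)\le \mathcal T^*(d)$ on $[\mathcal T'(+\infty),\mathcal T'(-\infty)]$ follows from part~(1) through \eqref{MFI}. For the emptiness on $(0,\mathcal T'(+\infty))$: any non-atomic $x\in\supp(\nu_\omega)$ lies in a shrinking nest $\{J_\omega^{v|_n}\}$ whose largest complementary gap forces $\nu_\omega(B(x,r))\ge r^{\mathcal T'(+\infty)+o(1)}$, so $\du(\nu_\omega,x)\le\mathcal T'(+\infty)$; since $\du$ vanishes at atoms, no $x$ realizes $\du\in (0,\mathcal T'(+\infty))$. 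The identification $\overline E(\nu_\omega,0)=E(\nu_\omega,0)=\{\text{atoms}\}$ is then immediate from $\nu_\omega(B(x,r))\ge\nu_\omega(\{x\})>0$ at atoms.

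The hard part is the matching lower bound $\dim_H E(\nu_\omega,d)\ge \mathcal T^*(d)$ on $[\mathcal T'(+\infty),\mathcal T'(-\infty)]$. I would construct, via heterogeneous ubiquity, sufficiently many points $x$ approximated by atoms of $\nu_\omega$ at the precise rate prescribed by $d$. The natural auxiliary object is the random weak Gibbs measure associated with $q\Psi-\mathcal T(q)\Phi$, where $q$ is Legendre-conjugate to $d$; its transport through $F_{\mu_\omega}$ concentrates on points with the correct $\nu_\omega$-local dimension. The obstacle is that the framework of \cite{BS2007,BS2009} is tailored to deterministic Gibbs measures on a fullshift, whereas here one must work with quenched realizations over a random subshift of finite type. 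The adaptation requires a random version of the key geometric lemma of \cite{BS2007}, in which the connector words $v(w_{n-1},w_0')$ introduced in Section~\ref{section:Setting and main results} play the role of free transition blocks; controlling the distortion they introduce and the fluctuations of Birkhoff sums along typical $\omega$ is the central technical step. Once this random ubiquity lemma is established, the construction of Cantor subsets of approximators carrying a measure of local dimension $\mathcal T^*(d)$, and the deduction of the strong formalism on the maximal set $\mathbb R\setminus\{d:\mathcal T^*(d)<(\min(\mathcal T,0))^*(d)\}$, proceed as in the deterministic case.
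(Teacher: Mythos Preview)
Your plan has the right instincts about the $\Psi/\Phi$ duality and the need for a random version of conditioned ubiquity, but there are three genuine gaps.

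First, in part~(2) you assert that the upper bound $\dim_H E(\nu_\omega,d)\le\mathcal T^*(d)$ on $[\mathcal T'(+\infty),\mathcal T'(-\infty)]$ follows from part~(1) via \eqref{MFI}. It does not: since $\tau_{\nu_\omega}=\min(\mathcal T,0)$, the multifractal inequality only yields $\dim_H\overline E(\nu_\omega,d)\le\tau_{\nu_\omega}^*(d)=t_0\,d$ on $[\mathcal T'(+\infty),\mathcal T'(t_0-))$, which is \emph{strictly larger} than $\mathcal T^*(d)$ there. The paper closes this gap by an independent argument: it first proves that the coarse approximation rate $\widehat\xi(\omega,x)=1$ for every $x\notin\Xi_\omega$, deduces $\du(\nu_\omega,x)\ge\liminf_n\log\nu_\omega(\widering I_\omega^{x|_n})/\log|I_\omega^{x|_n}|$, and then runs a direct Hausdorff covering bound on the sublevel sets of this $\liminf$ using the pressure equation $P(q\Psi-\mathcal T(q)\Phi)=0$ for $q\ge 0$.

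Second, your emptiness argument on $(0,\mathcal T'(+\infty))$ goes the wrong way. From ``$\nu_\omega(B(x,r))\ge r^{\mathcal T'(+\infty)+o(1)}$'' you get $\du(\nu_\omega,x)\le\mathcal T'(+\infty)$, which is perfectly compatible with $\du(\nu_\omega,x)$ taking any value in $(0,\mathcal T'(+\infty))$; it rules nothing out. What is needed is the opposite inequality $\du(\nu_\omega,x)\ge\mathcal T'(+\infty)$ for every non-atom $x$, and this again comes from the $\widehat\xi=1$ argument above (showing that the covering bound gives an empty set when $d<\mathcal T'(+\infty)$ because $\mathcal T^*(d)<0$).

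Third, you misplace the role of ubiquity. The lower bound $\dim_H E(\nu_\omega,d)\ge\mathcal T^*(d)$ on $[\mathcal T'(+\infty),\mathcal T'(-\infty)]$ is \emph{not} the hard part and does \emph{not} use ubiquity: it follows from the Riedi--Mandelbrot inversion theory, since $E(\nu_\omega,d)\supset\{x:\lim_{I\ni x}\log\nu_\omega(I)/\log|I|=d\}$ and the latter set inherits its dimension from the multifractal formalism for $\mu_\omega$ established in \cite{Yuan2016}. Conditioned ubiquity is instead the key to part~(1), specifically to the lower bound $\dim_H\underline E(\nu_\omega,d)\ge t_0\,d$ on the \emph{linear} segment $d\in[0,\mathcal T'(t_0-)]$, where one must produce points approximated by atoms $z_\omega^v$ at a prescribed super-rate $\xi=\mathcal T'(t_0-)/d>1$ while controlling $S_{|v|}\Psi/S_{|v|}\Phi$ near $\mathcal T'(t_0-)$. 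Your proposal never addresses this linear part, and without it the equality $\tau_{\nu_\omega}=\min(\mathcal T,0)$ cannot be concluded (your ``near-optimal packing'' argument for $\tau_{\nu_\omega}\le\mathcal T$ is not substantiated; the paper obtains that inequality only a posteriori, by Legendre duality from the full lower Hausdorff spectrum).
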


\begin{remark}\label{rem1}(1) The flavor of theorem~\ref{multifractal inverse}(i) is similar to that of  the result obtained in~\cite{BS2009} for  the inverse of Gibbs measures on cookie-cutter sets: for the level sets of  the lower local dimension, the Hausdorff spectrum is composed of two parts: a linear part with slope $\dim_H X_{\omega}$, which is established thanks to conditioned ubiquity theory, and a concave part which mainly reflects the  multifractal structure  of weak Gibbs measures or, equivalently, ratios of Birkhoff averages.  The properties stated in Theorem~\ref{multifractal inverse} are not considered in  \cite{BS2009}. Also, in \cite{BS2009} the level set  $\underline E(\nu_\omega,\mathcal{T}'(-\infty))$, corresponding to the maximal lower local dimension, is not treated when $\mathcal{T}^*(\mathcal{T}'(-\infty))=0$.
	
	(2) Although the main lines of the proof of  theorem~\ref{multifractal inverse}(i) are similar to those used to treat the case of deterministic Gibbs measures \cite{BS2009}, the study of $\nu_\omega$ requires the tools developed in \cite{Yuan2016} to study the multifractal nature of random weak Gibbs measures. Also,  it is made structurally more complex because the weak Gibbs measures are constructed on a random subshift; this is reflected in the expression of $\nu_\omega$ as a weighted sum of Dirac masses (see propositions~\ref{definition nu} and~\ref{measure v}).  Moreover, we need to establish a version of the heterogeneous ubiquity theorem of \cite{BS2007} adapted to our more general context. Indeed, until now the sufficient conditions required to directly apply the main result of \cite{BS2007} have been checked to hold in connection with a random Gibbs measure only in the random fullshift case and when $\mathbb{P}$ is a product measure \cite{BS2005}, and our investigations lead us to conclude that such conditions can be verified in our more general setting only if the dynamical system $(\Omega,\sigma,\mathbb{P})$ possesses rapid decay of correlations.
	
	(3) It is easy to see that there is a strong relationship between $T$ and $\mathcal{T}$: a short calculation yields $\mathcal{T}^*(d)=d T^*(1/d)$ for all $d$ in the support of $\mathcal T^*$. 
\end{remark}

The rest of the paper is organized as follows. Section 3  provides  an explicit writing of the measure $\nu_\omega$ and some useful estimate of the mass of its atoms. In section 4, we start the multifractal analysis  of $\nu_\omega$ by examining the possible scenarios which lead to a given lower local dimension. This yields a first, not everywhere sharp, but very useful for the sequel, upper bound for the lower Hausdorff spectrum. Indeed, it is already related to conditioned ubiquity properties associated with the sets of atoms, and thus it provides a beginning of concrete explanation of the origin of the linear part in the lower Hausdorff spectrum. In section 5, we derive the sharp upper bound for the $L^q$-spectrum of $\nu_\omega$, in which ubiquity properties remain hidden.  Section 6 introduces basic properties related to the approximation of $(\Phi,\Psi)$ by H\"older continuous random potentials, and used in next sections. Sections 7 to 9 obtain the sharp lower bound for the lower Hausdorff spectrum. This, combined with the result of section 5 gives the equality $\dim_H \underline E(\mu,d)=\tau_{\nu_\omega}^*(d)$ for all $d\in \mathbb{R}$, hence the validity of the multifractal formalism, as well as the equality $\tau_{\nu_\omega}=\min(\mathcal{T},0)$ by the duality property of Legendre transforms of concave functions. Specifically, section 7 derives the sharp lower bound for the lower Hausdorff spectrum in its non linear part, section 8  provides the conditioned ubiquity theorem used in section 9 to get the sharp lower bound for the lower Hausdorff spectrum in the linear part. Finally, section 10 deals with the Hausdorff dimension of the level sets $E(\nu_\omega,d)$ and $\overline E(\nu_\omega,d)$.

\section{Writing of $\nu_\omega$ as a sum of Dirac masses and estimates for the point masses}\label{section:The basic notations} We begin with a useful proposition established in~\cite{Yuan2016}, which provides estimates of the $\mu_\omega$ mass and the diameter of any set of the form $X_\omega^v$:

\begin{proposition}[\cite{Yuan2016}, Proposition 3]\label{for n}
	For $\mathbb{P}$-a.e.  $\omega\in \Omega$, there are  non increasing sequences $(\epsilon(\psi,\omega,n))_{n\ge 0}$ and $(\epsilon(\phi,\omega,n))_{n\ge 0}$, that we also denote as $(\epsilon(\Psi,\omega,n))_{n\ge 0}$ and $(\epsilon(\Phi,\omega,n))_{n\ge 0}$, converging to 0 as $n\to +\infty$, such that for all $n\in \mathbb{N}$, for all $v=v_0v_1\dots v_{n}\in\Sigma_{\omega,n}$, we have (the diameter of a set $E$ is denoted by $|E|$):
	\begin{enumerate}
		\item  For all $z\in \widering{U}_{\omega}^v,$
		$$
		\exp(S_n\psi(\omega,z)-n\epsilon(\psi,\omega,n))\leq |U_{\omega}^v|\leq \exp(S_n\psi(\omega,z)+n\epsilon(\psi,\omega,n)),$$
		hence for all $\vl\in [v]_{\omega}$,
		$$\exp(S_n\Psi(\omega,\vl)-n\epsilon(\Psi,\omega,n))\leq |U_{\omega}^v|\leq \exp(S_n\Psi(\omega,\vl)+n\epsilon(\Psi,\omega,n)).$$
		Consequently, for all $\vl\in X_{\omega}^v$:
		$$
		|X_{\omega}^v|\leq|U_{\omega}^v|\leq \exp(S_n\Psi(\omega,\vl)+n\epsilon(\Psi,\omega,n)).$$

		\item For any $\Upsilon\in\mathbb{L}^1_{\Sigma_{\Omega}}(\Omega,C(\Sigma))$, for any $v\in\Sigma_{\omega,n}$ with $n\in\mathbb{N}$,
		we have 
		$$
		\exp(-n\epsilon(\Upsilon,\omega,n))\leq \frac{\widetilde\mu^{\Upsilon}_{\omega}([v]_{\omega})}{\exp(S_n\Phi(\omega,\vl)-nP(\Upsilon))}
		\leq \exp(n\epsilon(\Upsilon,\omega,n)),
		$$
		for all $\vl\in[v]_{\omega}$, where $P(\Upsilon)$ denote the topological pressure for $\Upsilon$.
		
		Noticing $P(\Phi)=0$, we get 
		$$
		\exp(S_n\Phi(\omega,\vl)-n\epsilon(\Phi,\omega,n))\leq \widetilde\mu_{\omega}([v]_{\omega})\leq \exp(S_n\Phi(\omega,\vl)+n\epsilon(\Phi,\omega,n)),$$
		hence for all $z\in U_{\omega}^v$,
		$$
		\exp(S_n\phi(\omega,z)-n\epsilon(\phi,\omega,n))\leq \mu_{\omega}(X_{\omega}^v)\leq \mu_{\omega}(U^v_\omega),$$ as well as $\mu_{\omega}(U^v_\omega)\le \exp(S_n\phi(\omega,z)+n\epsilon(\phi,\omega,n))$ if $\widetilde\mu_\omega$ is atomless.
	\end{enumerate}
\end{proposition}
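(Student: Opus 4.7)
The plan is to derive both estimates from standard bounded-distortion arguments for random expanding Markov maps, adapted to (a) the fact that the potentials $\phi,\psi$ have only a vanishing modulus of continuity, captured by the definition of $\mathbb{L}^1_{X_\Omega}(\Omega,\widetilde C([0,1]))$, and (b) the fact that $(\Omega,\sigma,\mathbb{P})$ is only ergodic, so all errors must be absorbed into Birkhoff averages rather than controlled by a uniform constant. A preliminary step, needed for both parts, is to use the contraction hypothesis $c_\psi>0$ together with Birkhoff's theorem applied to $\omega\mapsto\sup_{s,x}\psi(\omega,s,x)$ in order to show that, for $\mathbb{P}$-a.e.\ $\omega$, $\max_{v\in\Sigma_{\omega,n}}|U_\omega^v|$ decays exponentially in $n$; this is what subsequently makes the modulus-of-continuity bounds effective.

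For part (1), I would fix $v=v_0\cdots v_{n-1}\in\Sigma_{\omega,n}$ and consider the composition $h_\omega^v=T_{\sigma^{n-1}\omega}^{v_{n-1}}\circ\cdots\circ T_\omega^{v_0}$, a $C^1$ diffeomorphism of a neighborhood of $U_\omega^v$ onto $[0,1]$. The chain rule gives $-\log|(h_\omega^v)'(z)|=S_n\Psi(\omega,\vl)$ for every $\vl\in\pi_\omega^{-1}(z)\cap[v]_\omega$, while the mean value theorem provides $z_0\in\widering{U}_{\omega}^v$ with $|U_\omega^v|=|(h_\omega^v)'(z_0)|^{-1}$. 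For any other $z\in\widering{U}_{\omega}^v$, the distortion difference is bounded by
\begin{equation*}
\bigl|\log|(h_\omega^v)'(z)|-\log|(h_\omega^v)'(z_0)|\bigr|\le\sum_{i=0}^{n-1}\text{var}\bigl(\psi,\sigma^i\omega,|U_{\sigma^i\omega}^{v_i\cdots v_{n-1}}|\bigr).
\end{equation*}
The preliminary exponential decay, combined with $\text{var}(\psi,\omega,\varepsilon)\to 0$ as $\varepsilon\to 0$ and a second Birkhoff argument, makes the right-hand side $o(n)$ uniformly in $v$. Defining $\epsilon(\psi,\omega,n)$ as $\sup_{m\ge n}m^{-1}$ times the supremum over $v\in\Sigma_{\omega,m}$ of the above quantity yields a non-increasing sequence tending to $0$ $\mathbb{P}$-almost surely; the claim for $x\in X_\omega^v$ is immediate since $X_\omega^v\subset U_\omega^v$.

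For part (2), I would iterate the eigenmeasure identity $(\mathcal{L}_\Upsilon^\omega)^*\widetilde\mu_{\sigma\omega}^\Upsilon=\lambda^\Upsilon(\omega)\widetilde\mu_\omega^\Upsilon$ $n$ times and test against $\mathbf 1_{[v]_\omega}$ to obtain
\begin{equation*}
\widetilde\mu_\omega^\Upsilon([v]_\omega)=\Big(\prod_{i=0}^{n-1}\lambda^\Upsilon(\sigma^i\omega)\Big)^{-1}\int_{F_\omega^n[v]_\omega}\exp\bigl(S_n\Upsilon(\omega,v\vl)\bigr)\,d\widetilde\mu_{\sigma^n\omega}^\Upsilon(\vl),
\end{equation*}
where $v\vl$ denotes the unique preimage of $\vl$ in $[v]_\omega$ under $F_\omega^n$. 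For any two admissible $\wl,\wl'\in[v]_\omega$ one has $|S_n\Upsilon(\omega,\wl)-S_n\Upsilon(\omega,\wl')|\le\sum_{i=0}^{n-1}\text{var}_{n-i}\Upsilon(\sigma^i\omega)$, which is $o(n)$ $\mathbb{P}$-a.s.\ by hypothesis and Birkhoff. The upper bound then follows by pulling the supremum of the integrand out and using $\widetilde\mu_{\sigma^n\omega}^\Upsilon(F_\omega^n[v]_\omega)\le 1$; for the lower bound I would invoke topological mixing, replacing $n$ by $n'=n+M(\sigma^{n-1}\omega)$ to guarantee a positive lower bound on $\widetilde\mu_{\sigma^{n'}\omega}^\Upsilon(F_\omega^{n'}[v]_\omega)$ at the cost of a bounded-in-$n$ correction. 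Combined with Proposition~\ref{eigencon}, which controls the pressure discrepancy $|\log\prod_{i=0}^{n-1}\lambda^\Upsilon(\sigma^i\omega)-nP(\Upsilon)|$, all errors are absorbed into a single non-increasing sequence $\epsilon(\Upsilon,\omega,n)\to 0$ by the same sup-in-$m$ construction as in part (1). Specializing $\Upsilon=\Phi$ (with $P(\Phi)=0$), the $\mu_\omega$-bounds then follow from $\mu_\omega=\widetilde\mu_\omega\circ\pi_\omega^{-1}$, since $\pi_\omega^{-1}(X_\omega^v)\supseteq[v]_\omega$ with equality off the at most countable set of points having two $\pi_\omega$-preimages, which carries no $\widetilde\mu_\omega$-mass when $\widetilde\mu_\omega$ is atomless.

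The main obstacle is producing error sequences $\epsilon(\cdot,\omega,n)$ that depend only on $\omega$ (not on the cylinder $v$), are non-increasing in $n$, and tend to $0$ $\mathbb{P}$-almost surely, in a setting with no quantitative mixing and no uniform expansion. The remedy is the double-supremum construction above---first over $v\in\Sigma_{\omega,m}$ to kill the cylinder dependence, then over $m\ge n$ to enforce monotonicity---combined with Birkhoff's ergodic theorem applied successively to $\text{var}_n\Upsilon(\omega)$, to $\text{var}(\psi,\cdot,\varepsilon)$, and to the log-eigenvalue sequence from Proposition~\ref{eigencon}.
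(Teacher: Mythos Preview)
The paper does not prove this proposition; it is quoted as \cite[Proposition~3]{Yuan2016} and used as a black box, so there is no in-paper argument to compare your proposal against. Your outline is the standard bounded-distortion route---mean-value theorem plus chain rule for part~(1), iterated conformal/eigenmeasure identity plus Proposition~\ref{eigencon} for part~(2), with all error terms rendered $o(n)$ via Birkhoff averages of $\text{var}(\psi,\cdot,\varepsilon)$, $\text{var}_n\Upsilon$, and $\log\lambda^\Upsilon$---and this is almost certainly what the cited reference does. One small detail to tighten: for the lower bound in part~(2), the mixing step should use $M(\sigma^n\omega)$ rather than $M(\sigma^{n-1}\omega)$, since it is the matrix product starting at $\sigma^n\omega$ that becomes positive after $M(\sigma^n\omega)$ steps; the cost of this extension (namely $\sum_{i=n}^{n+M(\sigma^n\omega)-1}\|\Upsilon(\sigma^i\omega)\|_\infty$ plus the corresponding eigenvalue product) is controlled by the integrability of $\|\Upsilon\|_\infty$ and of $\log l$, together with the fact that $M(\sigma^n\omega)=o(n)$ for $\mathbb{P}$-a.e.~$\omega$ (a consequence of $M<\infty$ a.s.\ and ergodicity). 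With that adjustment your plan is sound.
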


Proposition~\ref{for n} (ii) and assumption \eqref{cphi} imply that $\widetilde\mu_\omega$ is atomless $\mathbb P$-almost surely. Without loss of generality, we assume that this is the case for all $\omega\in\Omega$ and the sequences $(n\epsilon(\Psi,\omega,n))_{n\ge 0}$ and $(n\epsilon(\Phi,\omega,n))_{n\ge 0}$ are increasing as $n$ increasing to $\infty$. Furthermore, we can also ask $\sum_{i=0}^{n-1}\text{var}_{n-i}\Upsilon(\sigma^i\omega)\leq\epsilon(\Phi,\omega,n)$ for any $\Upsilon\in\mathbb{L}^1_{\Sigma_{\Omega}}(\Omega,C(\Sigma))$.

\medskip

Next we introduce a few notations required to explicitly write $\nu_\omega$ as a sum of weighted Dirac measures.

For $\omega\in\Omega$,  $n\ge 1$, $v\in \Sigma_{\omega,n}$ and $k\ge 1$ we define
$$S(\omega,v,k)=\{w\in \Sigma_{\sigma^n\omega,k}: vw\in \Sigma_{n+k}(\omega)\},$$
the set of words in $\Sigma_{\sigma^n\omega,k}$ which can be a suffix of $v$. Next we consider the set of words $w$ in $S(\omega,v,k)$ such that $U^{vw}$ has a right neighboring interval $U^{v\tilde w}$, with $\tilde w\in S(\omega,v,k)$ :
$$S'(\omega,v,k)=\left\{w\in S(\omega,v,k)\left|
\begin{array}{l}
\text{ there exists }\widetilde{w}\in S(\omega,v,k) \text{(necessarily unique)}\\\text{such that }
U_{\omega}^{v\widetilde{w}} \text{ is the nearest right  neighboring} \\\text{interval of $U_{\omega}^{vw}$}
\end{array}\right.
\right\}.$$
\begin{remark}Notice that $S'(\omega,v,k)$ may be empty, while this is not possible in the deterministic case, i.\,e. when the attractor is a cookie cutter set.
\end{remark}
For any $w\in S'(\omega,v,k)$, we denote by $\widetilde{w}$ the element of $S(\omega,v,k)$ such that $U_{\omega}^{v\widetilde{w}}$ is the closest right  neighboring interval of $U_{\omega}^{vw}$.

For every $v\in \Sigma_{\omega,\ast}$, $k\ge 1$ and $ w\in S(\omega,v,k)$, define
$$
m^{vw}_{\omega}=\min X_\omega^{vw}\quad\text{and}\quad  M^{vw}_{\omega}=\max X^{vw}_{\omega},
$$

For any $v\in \Sigma_{\omega,\ast}$, we define
$$I_{\omega}^v:=[F_{\mu_{\omega}}(m_{\omega}^v),F_{\mu_{\omega}}(M_{\omega}^v))
=F_{\mu_{\omega}}(X_{\omega}^v)\setminus\{F_{\mu_{\omega}}(M_{\omega}^v)\}.$$

Since the support of $\mu_{\omega}$ restricted to the interval $[m_{\omega}^v,M_{\omega}^v]$ (or $U_{\omega}^v$) is $X_{\omega}^v$, by construction $I_{\omega}^v$ is a non-empty interval of length $|I_{\omega}^v|=\mu_{\omega}(X_{\omega}^v)=\widetilde\mu_{\omega}([v]_{\omega})$.

Also, since $\text{supp}(\mu_{\omega})= X_{\omega}$ and $\bigcup_{v\in\Sigma_{\omega,n}}X_{\omega}^v=X_{\omega}$, the families of intervals
$\mathcal{F}_{\omega}^{n}=\{I_{\omega}^v\}_{v\in\Sigma_{\omega,n}},\ n\geq 1$, form a sequence of refined partitions   of $[0,1)$ into intervals.

For any $v\in \Sigma_{\omega,\ast}$ and $s\in S'(\omega,v,1)$, we define
$$
x_{\omega}^{vs}=F_{\mu_{\omega}}(M_{\omega}^{vs}).
$$
We also define $m_{\omega}^{\min}=\min X_{\omega}$ and $M_{\omega}^{\max}= X_{\omega}$.

Notice that by construction we have
$$
\nu_\omega(\widering{I}_{\omega}^{v})\leq |X_{\omega}^v|\leq |U_{\omega}^v|$$
for any $\vl\in [v]_{\omega}$ and $v\in \Sigma_{\omega,*}$.

We can now give the following explicit form for the inverse of the random weak Gibbs measures $\{\mu_{\omega}:\omega\in \Omega\}$.
\begin{proposition}[{\rm The inverse measure $\nu_{\omega}$ of  $\mu_{\omega}$}]\label{definition nu}
	The inverse measure $\nu_{\omega}$  of the random weak Gibbs measure $\mu_{\omega}$ is the discrete probability  measure on $[0,1]$ given by the following weighted sum of Dirac measures:
	\begin{equation}\label{eq1.1}
	\nu_{\omega}=m_{\omega}^{\min}\cdot\delta_0+\sum_{v\in\Sigma_{\omega,*}}\sum_{s\in S'(\omega,v,1)}(m_{\omega}^{v\widetilde{s}}-M_{\omega}^{vs})\cdot \delta_{x_{\omega}^{vs}}+(1-M_{\omega}^{\max})\delta_1.
	\end{equation}
\end{proposition}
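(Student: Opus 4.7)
The plan is to exploit two consequences of the hypotheses: first, $\mu_\omega$ is atomless (by proposition~\ref{for n}(ii) together with \eqref{cphi}), so its distribution function $F_{\mu_\omega}$ is continuous; second, since $X_\omega$ has zero Lebesgue measure, $F_{\mu_\omega}$ is constant on every connected component (\emph{gap}) of $[0,1]\setminus X_\omega$. Thus $F_{\nu_\omega}$, defined as the pseudo-inverse of $F_{\mu_\omega}$, is a pure step function, with one jump per gap located at the constant value of $F_{\mu_\omega}$ on that gap and with jump size equal to the gap's length. The proposition then reduces to identifying all gaps and their $F_{\mu_\omega}$-values.

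\textbf{Classification of the gaps.} I claim that the connected components of $[0,1]\setminus X_\omega$ are exactly $[0,m_\omega^{\min})$ (when non-empty), $(M_\omega^{\max},1]$ (when non-empty), and $(M_\omega^{vs},m_\omega^{v\widetilde s})$ for each $v\in\Sigma_{\omega,\ast}$ (interpreting the empty word so that level-$1$ gaps are covered) and each $s\in S'(\omega,v,1)$. Each listed set is disjoint from $X_\omega$ by construction. For the converse, given $y\in[0,1]\setminus X_\omega$ with $m_\omega^{\min}\le y\le M_\omega^{\max}$, set $a_y=\max\{z\in X_\omega:z\le y\}$ and $b_y=\min\{z\in X_\omega:z\ge y\}$; these exist because $X_\omega$ is compact, and $(a_y,b_y)$ is the maximal component of $[0,1]\setminus X_\omega$ containing $y$. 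Because $\max_{v\in\Sigma_{\omega,n}}|U_\omega^v|\to 0$ by proposition~\ref{for n}(i) and \eqref{cpsi}, there is a largest finite $n^*$ such that $a_y$ and $b_y$ both lie in some common $U_\omega^v$ with $v\in\Sigma_{\omega,n^*}$. At the next level, $a_y\in U_\omega^{vs}$ and $b_y\in U_\omega^{v\widetilde s}$ for distinct $s,\widetilde s\in S(\omega,v,1)$. Any child $U_\omega^{vs'}$ strictly between these two would contribute a non-empty subset $X_\omega^{vs'}\subseteq X_\omega$ to $(a_y,b_y)$, contradicting the definition of $a_y,b_y$; hence $s\in S'(\omega,v,1)$ with $\widetilde s$ as its nearest right neighbor. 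A short argument using $a_y\in X_\omega\cap U_\omega^{vs}\subseteq X_\omega^{vs}$ together with the absence of $X_\omega$-points in $(a_y,b_y)$ then forces $a_y=M_\omega^{vs}$ and $b_y=m_\omega^{v\widetilde s}$.

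\textbf{From gaps to Dirac masses.} On each gap, $F_{\mu_\omega}$ is constant because $\supp(\mu_\omega)\subseteq X_\omega$: it equals $0$ on $[0,m_\omega^{\min})$, equals $1$ on $(M_\omega^{\max},1]$, and equals $F_{\mu_\omega}(M_\omega^{vs})=x_\omega^{vs}$ on $(M_\omega^{vs},m_\omega^{v\widetilde s})$. Different gaps produce different plateau values, since $\supp(\mu_\omega)=X_\omega$ makes $F_{\mu_\omega}$ strictly increase across any subinterval meeting $X_\omega$ in a set of positive $\mu_\omega$-mass. From $F_{\nu_\omega}(x)=\sup\{t:F_{\mu_\omega}(t)\le x\}$, each plateau of $F_{\mu_\omega}$ at value $c$ over an interval $[a,b]$ produces a jump of $F_{\nu_\omega}$ at $c$ of size exactly $b-a$, and $F_{\nu_\omega}$ receives no other contribution. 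Summing these jumps gives formula~\eqref{eq1.1}; the total mass equals the Lebesgue measure of $[0,1]\setminus X_\omega$, which equals $1$ by \eqref{lebesgue measure 0}, so $\nu_\omega$ is purely atomic and coincides with the stated weighted sum of Dirac masses.

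The main obstacle will be the gap classification, where the random-subshift features complicate the picture compared with the deterministic cookie-cutter case of \cite{BS2009}: $S'(\omega,v,1)$ may be empty, the children $U_\omega^{vs}$ need not be contiguous within $U_\omega^v$ (so additional sub-gaps arise), and boundary coincidences between adjacent $U_\omega^{vs}$ and $U_\omega^{v\widetilde s}$ must be reconciled carefully with the identifications $a_y=M_\omega^{vs}$ and $b_y=m_\omega^{v\widetilde s}$.
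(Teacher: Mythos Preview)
Your proof is correct and considerably more detailed than the paper's, which offers only a two-sentence sketch. The paper argues by mass-counting: from the definition of the inverse measure one sees directly that $\nu_\omega(\{x_\omega^{vs}\})\ge m_\omega^{v\widetilde s}-M_\omega^{vs}$ (and similarly $\nu_\omega(\{0\})\ge m_\omega^{\min}$, $\nu_\omega(\{1\})\ge 1-M_\omega^{\max}$); since the sum of all these lower bounds equals the Lebesgue measure of $[0,1]\setminus X_\omega$, which is $1$ by \eqref{lebesgue measure 0}, the lower bounds must all be equalities and $\nu_\omega$ is exhausted by these atoms. Your route is instead a direct structural analysis: you classify the connected components of $[0,1]\setminus X_\omega$ and read off the atoms of $\nu_\omega$ from the plateaus of $F_{\mu_\omega}$. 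The paper's trick is shorter and neatly avoids having to prove that \emph{every} gap is of the form $(M_\omega^{vs},m_\omega^{v\widetilde s})$ or an endpoint gap; your argument, on the other hand, gives the full geometric picture, handles the boundary subtleties (contiguous $U_\omega^{vs}$, possibly empty $S'(\omega,v,1)$) explicitly, and makes the need for the empty-word convention to capture level-$1$ gaps visible.
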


This proposition can be  easily proved if we notice the following two facts. On the one hand, from the definition we can get that at each point $x_{\omega}^{vs}$, the point mass  is at least $m_{\omega}^{v\widetilde{s}}-M_{\omega}^{vs}$.
On the other hand, we know the total mass of $\nu_\omega$ is 1 and $m_{\omega}^{\min}+\sum_{v\in\Sigma_{\omega,*}}\sum_{s\in S'(\omega,v,1)}(m_{\omega}^{v\widetilde{s}}-M_{\omega}^{vs}) +(1-M_{\omega}^{\max})=1$ from equation~\eqref{lebesgue measure 0}.

\begin{remark}
	Notice that even if $S'(\omega,v,1)\neq\emptyset$, the weight $m_{\omega}^{v\widetilde{s}}-M_{\omega}^{vs}$  may vanish if there is no gap between $X_\omega^{vs}$ and $X_\omega^{v\widetilde{s}}$. For instance, it is not difficult to see that in the full shift case this situation occurs  with probability 1,  infinitely many times,  if and only if with probability 1 we have $l(\omega)\ge 3$, $\min (U_\omega^{1})=0$, $\max (U_\omega^{l(\omega)})=1$, and $\{1\le i\le l(\omega)-1: U_\omega^{i} \text{ and }U_\omega^{i+1}\text{ are contiguous}\}\neq\emptyset$.
\end{remark}

We end this section with a non trivial lower bound estimate for some point masses associated with $\nu_\omega$ (Proposition~\ref{measure v}). For every $k\ge 1$  define
$$
\text{gap}(\omega,k)=\inf_{v\in \Sigma_{\omega,1}}\sup_{1\le m\leq k}\sup_{w \in S'(\omega,v,m)}\{m^{v\widetilde{w}}_{\omega}-M^{vw}_{\omega}\}.
$$

\begin{lemma}\label{gap} We have
	$$\mathbb{P}(\{\omega\in\Omega:\sup_{k\geq 1}\mathrm{gap}(\omega,k)>0\})>0.$$
	Consequently,  setting $\mathrm{Gap}(k,\gamma)=\{\omega\in\Omega:\mathrm{gap}(\omega,k)>\gamma\}$, there exist some $k_{\psi}>0$ and $\gamma_\psi>0$ such that $\mathbb{P}(\mathrm{Gap}(k_{\psi},\gamma_\psi))>0$.
\end{lemma}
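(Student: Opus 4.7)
The plan is to argue by contradiction: suppose $\mathbb{P}(\{\omega:\sup_{k\geq 1}\mathrm{gap}(\omega,k)>0\})=0$, so that $\mathbb{P}$-a.s.\ $\mathrm{gap}(\omega,k)=0$ for every $k\ge 1$. Since $\Sigma_{\omega,1}$ is finite and the inner supremum $\sup_{1\le m\le k}\sup_{w\in S'(\omega,v,m)}\{m_{\omega}^{v\widetilde{w}}-M_{\omega}^{vw}\}$ is nondecreasing in $k$, a pigeonhole argument over first-level letters produces a single $v_0=v_0(\omega)\in\Sigma_{\omega,1}$ for which the equality $m_{\omega}^{v_0\widetilde{w}}=M_{\omega}^{v_0 w}$ holds simultaneously for every $m\ge 1$ and every $w\in S'(\omega,v_0,m)$.

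The crucial geometric step is to deduce from this that $X_{\omega}^{v_0}=[m_{\omega}^{v_0},M_{\omega}^{v_0}]$. If not, pick a connected component $(a_n,b_n)$ of $[m_{\omega}^{v_0},M_{\omega}^{v_0}]\setminus X_{\omega}^{v_0}$ with $a_n,b_n\in X_{\omega}^{v_0}$. By the contraction-in-mean assumption \eqref{cpsi}, $|U_{\omega}^{v_0 w}|\to 0$ uniformly in $w\in S(\omega,v_0,m)$ as $m\to\infty$, so for $m$ large the cylinders $w_i,w_j$ containing respectively $a_n$ and $b_n$ satisfy $i<j$. From $(a_n,b_n)\cap X_{\omega}^{v_0}=\emptyset$ and $|X_{\omega}^{v_0 w_i}|\le|U_{\omega}^{v_0 w_i}|<b_n-a_n$ one deduces $M_{\omega}^{v_0 w_i}=a_n$, and symmetrically $m_{\omega}^{v_0 w_j}=b_n$. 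Any intermediate cylinder $w_\ell$ with $i<\ell<j$ would then satisfy $X_{\omega}^{v_0 w_\ell}\subset[a_n,b_n]\setminus(a_n,b_n)=\{a_n,b_n\}$; but topological mixing of the random subshift combined with $\dim_H X_{\omega}=t_0>0$ (Bowen--Ruelle) forces $X_{\omega}^{v_0 w_\ell}$ to be uncountable for $m$ large enough, a contradiction. So $j=i+1$, giving $w_i\in S'(\omega,v_0,m)$ with $\widetilde{w_i}=w_j$ and $m_{\omega}^{v_0\widetilde{w_i}}-M_{\omega}^{v_0 w_i}=b_n-a_n>0$, violating the defining property of $v_0$.

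Hence $X_{\omega}^{v_0}=[m_{\omega}^{v_0},M_{\omega}^{v_0}]$, which is a nondegenerate closed interval by uncountability of $X_{\omega}^{v_0}$, so $\mathrm{Leb}(X_{\omega})>0$ on a set of positive probability, contradicting \eqref{lebesgue measure 0}. For the second conclusion, writing
\[
\{\omega:\sup_{k\ge 1}\mathrm{gap}(\omega,k)>0\}=\bigcup_{k,n\ge 1}\mathrm{Gap}(k,1/n)
\]
and invoking countable subadditivity, some pair $(k,n)$ gives $\mathbb{P}(\mathrm{Gap}(k,1/n))>0$; one then sets $k_\psi:=k$ and $\gamma_\psi:=1/n$. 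I expect the main obstacle to be the geometric step, where one must carefully rule out collapse of intermediate cylinders onto the endpoints of the putative gap---this is precisely where uncountability of sub-attractors, granted by topological mixing together with positivity of the Bowen--Ruelle exponent, plays an essential role.
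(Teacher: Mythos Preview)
Your proof is correct and reaches the same intermediate conclusion as the paper---that for $\mathbb{P}$-a.e.\ $\omega$ there is a first-level letter $v$ with $X_{\omega}^{v}$ connected---but then resolves the dichotomy ``point versus interval'' in the opposite direction. The paper uses $\mathrm{Leb}(X_\omega)=0$ to force $X_{\omega}^{v}$ to be a single point, and derives a contradiction via an explicit ergodic construction: using return times to the set $B=\{\omega:M(\omega)\leq M',\ l(\omega)\geq 2\}$, it exhibits at least four words of a fixed length with prefix~$v$, whose associated sub-cylinders yield at least two distinct points in $X_{\omega}^{v}$. You instead invoke the Bowen--Ruelle formula $\dim_H X_\omega=t_0>0$ to obtain uncountability of every sub-cylinder; this simultaneously rules out intermediate cylinders collapsing onto $\{a_n,b_n\}$ in your geometric step and excludes the ``point'' alternative, leaving $X_{\omega}^{v_0}$ a nondegenerate interval and contradicting $\mathrm{Leb}(X_\omega)=0$ directly. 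Your route is more streamlined but leans on $t_0>0$, which itself rests on $\mathbb{P}(l\geq 2)>0$ and topological mixing---precisely the ingredients the paper unpacks by hand in its four-point construction. A small bonus of your write-up is that it makes explicit the implication ``this implies that $X_{\omega}^{v}$ has no gap,'' which the paper asserts without justification.
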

\begin{remark}
	We notice that property~\eqref{lebesgue measure 0} is not necessary to get lemma~\ref{gap}. We only need that $X_{\omega}$ differs from $[0,1]$ for $\mathbb P$-a.e. $\omega\in\Omega$.
\end{remark}
\begin{proof} Suppose, by contradiction, that  the result does not hold. Then for $\mathbb{P}$-a.e.  $\omega$,
	there exists $v\in \Sigma_{\omega,1}$ such that
	$$\sup_{m\in\mathbb{N}}\sup_{w \in S'(\omega,v,m)}m^{v\widetilde{w}}_{\omega}-M^{vw}_{\omega}=0.$$
	This implies that $X_{\omega}^v$ has no gap. Then $X_{\omega}^v$ is either a point or an interval. Since $X_{\omega}$ has a Lebesgue measure 0, we get that it is a point.
	
	Now, defining
	$$B=\{\omega\in\Omega:\ M(\omega)\leq M',\ l(\omega)\geq 2\},$$
	we have $\mathbb{P}(B)>0$ for $M'$ large enough. For any $\omega\in\Omega$, define $b_k(\omega)$ the $k$-th return time of $\omega$ to the set $B$ by the map $\sigma$. From ergodic theorem we have $\lim_{k\to\infty}\frac{b_k(\omega)}{k}=\frac{1}{\mathbb{P}(B)}$ for $\mathbb{P}$-a.e. $\omega\in \Omega$.
	Define $\Omega'=\{\omega\in \Omega:\lim_{k\to\infty}\frac{b_k(\omega)}{k}=\frac{1}{\mathbb{P}(B)} \}$.
	
	For any $\omega\in\Omega'$, we know that there are at least four words in $\Sigma_{\omega,b_{M'+2}}$ with the prefix $v\in\Sigma_{\omega,1}$, and  we denote them by $w^1,w^2,w^3$ and $w^4$. We can assume that these intervals appear from the left to the right as $U_{\omega}^{w^1},U_{\omega}^{w^2},U_{\omega}^{w^3}$ and $U_{\omega}^{w^4}$. The sets $X^{w^i}_{\omega}\subset U_{\omega}^{w^i}, i=1,2,3,4$, are not empty since by definition the random transition matrix $A$ has
	at least one non-zero entry in each row and each column. Choose $x_i\in X^{w^i}_{\omega} \subset U_{\omega}^{w^i}, i=1,2,3,4$. Since $U_{\omega}^{w^i}, i=1,2,3,4$ are intervals, we have that $x_4-x_1>0$, which contradicts the fact that $X_{\omega}^v$ is a singleton.
\end{proof}

\begin{proposition}\label{measure v}
	For $\mathbb{P}$-a.e.  $\omega\in \Omega$, for all $n\in \mathbb{N}$, for all $v\in \Sigma_{\omega,n}$, there exists $k_v$ and $w\in S'(\omega,v,k_v)$ such that
	$$m_{\omega}^{v\widetilde{w}}-M_{\omega}^{vw}\geq \exp(S_n\Psi(\omega,\vl)-o(n))$$
	for any $\vl\in [v]_{\omega}$.
	Here $o(n)$ is independent of $v$, and we have $k_v=o(n)$  independently on $v$ as well.
\end{proposition}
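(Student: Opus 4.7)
The plan is to combine Lemma~\ref{gap}, which supplies a positive-measure set $G:=\mathrm{Gap}(k_\psi,\gamma_\psi)$ along whose shifts a macroscopic gap of size at least $\gamma_\psi$ appears at depth $\le k_\psi$, with Birkhoff's ergodic theorem in order to catch a shift of $\omega$ in $G$ at a sub-linearly close depth, and then pull that gap back inside $X_\omega^v$ via the mass--diameter and bounded distortion content of Proposition~\ref{for n}(i).

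Fix $M'$ with $\mathbb{P}(\{M\le M'\})>0$ (possible since $M$ is a.s.\ finite) and, for each $n$, define the two waiting times
\[
k_n(\omega):=\min\{k\ge 0:M(\sigma^{n-1+k}\omega)\le M'\},\qquad j_n(\omega):=\min\{j\ge M':\sigma^{n+k_n(\omega)+j-1}\omega\in G\}.
\]
Birkhoff applied to $\mathbf{1}_{\{M\le M'\}}$ and $\mathbf{1}_G$, combined with the identity $S_{(1+\epsilon)n}\mathbf{1}_A-S_n\mathbf{1}_A\sim\epsilon\mathbb{P}(A)n$ valid for every $\epsilon>0$, gives $k_n(\omega),j_n(\omega)=o(n)$ for $\mathbb{P}$-a.e.\ $\omega$. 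Fix such an $\omega$, a large $n$, and $v\in\Sigma_{\omega,n}$. First extend $v$ by $k_n$ arbitrary admissible letters (each row of every $A(\sigma^i\omega)$ is non-zero) to obtain $v^{(1)}\in\Sigma_{\omega,n+k_n}$ with prefix $v$. Next pick any $s\in\Sigma_{\sigma^{n+k_n+j_n-1}\omega,1}$ and, using the $\ast$-operation at $\sigma^{n+k_n-1}\omega$ with $p=j_n\ge M'$, replace the last letter $v^{(1)}_{n+k_n-1}$ of $v^{(1)}$ by $v^{(1)}_{n+k_n-1}\ast s$, producing $V\in\Sigma_{\omega,n+k_n+j_n}$ of length $n+o(n)$ that starts with $v$ and ends with $s$. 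Setting $\omega'':=\sigma^{n+k_n+j_n-1}\omega\in G$, the definition of $G$ then provides $m_s\le k_\psi$ and $w_s\in S'(\omega'',s,m_s)$ with $m_{\omega''}^{s\widetilde{w_s}}-M_{\omega''}^{sw_s}\ge\gamma_\psi$.

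Write $V=V's$, let $T_\omega^{V'}:U_\omega^{V'}\to[0,1]$ be the composed branch diffeomorphism, and let $W,\widetilde W$ denote the length-$k_v$ suffixes (with $k_v:=k_n+j_n+m_s=o(n)$) of $Vw_s$ and $V\widetilde{w_s}$ after the prefix $v$. Up to swapping $w_s\leftrightarrow\widetilde{w_s}$ if $T_\omega^{V'}$ reverses orientation, $W\in S'(\omega,v,k_v)$ with $\widetilde W$ as its right neighbour, since distinct depth-$(n+k_n+j_n)$ cylinders have pairwise disjoint interiors while inside $U_\omega^V$ the depth-$m_s$ children are ordered exactly as the corresponding children of $U_{\omega''}^s$. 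The bounded distortion consequence of Proposition~\ref{for n}(i) yields $|((T_\omega^{V'})^{-1})'|\ge|U_\omega^{V'}|\exp(-o(n))$ uniformly on $[0,1]$, so
\[
m_\omega^{v\widetilde W}-M_\omega^{vW}\ge\gamma_\psi\,|U_\omega^{V'}|\exp(-o(n))\ge\gamma_\psi\exp\!\bigl(S_{n+k_n+j_n-1}\Psi(\omega,\vl')-o(n)\bigr)
\]
for some $\vl'\in[V']_\omega$. Splitting $S_{n+k_n+j_n-1}\Psi(\omega,\vl')=S_n\Psi(\omega,\vl')+S_{k_n+j_n-1}\Psi(\sigma^n\omega,F^n\vl')$, using the variation bound $|S_n\Psi(\omega,\vl')-S_n\Psi(\omega,\vl)|\le\sum_{i=0}^{n-1}\mathrm{var}_{n-i}\Psi(\sigma^i\omega)\le\epsilon(\Phi,\omega,n)=o(1)$ to replace $\vl'$ by any $\vl\in[v]_\omega$ in the first term, and bounding the second term by $S_{k_n+j_n-1}\|\Psi\|_\infty(\sigma^n\omega)=o(n)$ via Birkhoff for $\|\Psi\|_\infty\in L^1(\mathbb{P})$ together with $k_n+j_n=o(n)$, one reaches
\[
m_\omega^{v\widetilde W}-M_\omega^{vW}\ge\exp\!\bigl(S_n\Psi(\omega,\vl)-o(n)\bigr),
\]
with both $o(n)$ and $k_v$ independent of $v$.

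The chief obstacle is the pointwise upgrade in the second paragraph: Birkhoff's theorem delivers positive density of visits to $G$ and to $\{M\le M'\}$ only in an averaged sense, and one must leverage it --- via the scale comparison of $S_n$ against $S_{(1+\epsilon)n}$ for each $\epsilon>0$ --- into the quantitative bound $k_n(\omega)+j_n(\omega)=o(n)$ holding for \emph{every} $n$ large enough, for $\mathbb{P}$-a.e.\ $\omega$. A secondary concern, more bookkeeping than anything, is to ensure that the distortion error, the variation error, and the Birkhoff error for $\|\Psi\|_\infty$ all depend only on $\omega$ and $n$, so that their sum assembles into a single $o(n)$ uniform in $v\in\Sigma_{\omega,n}$.
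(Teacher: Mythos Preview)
Your proof is correct and follows essentially the same route as the paper's: wait an $o(n)$ time after depth $n$ until (i) the mixing constant is bounded and (ii) the orbit enters $\mathrm{Gap}(k_\psi,\gamma_\psi)$, bridge via the $\ast$-operation, and pull the macroscopic gap back through the branch inverse using the distortion bound of Proposition~\ref{for n}(i) (the paper phrases this last step as Lagrange's finite-increment theorem). The only organizational difference is that the paper bundles the Birkhoff control of $\|\Psi\|_\infty$ into the intermediate set $\varOmega_N=\{M(\omega)<N,\ \tfrac1n S_n\|\psi\|_\infty(\omega)\le 2C_\psi\ \forall n\ge N\}$, whereas you treat the mixing constraint and the bound $S_{k_n+j_n-1}\|\Psi\|_\infty(\sigma^n\omega)=o(n)$ separately; both achieve the same end.
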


\begin{proof}
	For any $N\in \mathbb{N}$, let
	$$\varOmega_N=\left \{\omega:\ M(\omega)< N,\  \frac{1}{n}\sum_{k=0}^{n-1}\sup_{1\leq s\leq l(\sigma^k \omega)}\sup_{x\in U_{\sigma^k\omega}}|\psi(\omega,s,x)|\leq 2C_{\psi},\forall n\geq N\right \}.$$
	Choose $N$ large enough so that $\mathbb{P}(\varOmega_N)>0$.
	
	For $\mathbb{P}$-a.e.  $\omega\in \Omega$, for $n$ large enough, denote by  $H_1(n)$ the smallest integer such that $\sigma^{n+H_1(n)}\omega\in \varOmega_N$, and $H_2(n)$ the smallest integer such that $\sigma^{n+H_1(n)+H_2(n)}\omega\in \mathrm{Gap}(k_{\psi},\gamma_\psi)$ with $H_2(n)\geq N$. Since $\mathbb{P}(\varOmega_N)>0$ and $\mathbb{P}(\mathrm{Gap}(k_{\psi},\gamma_\psi))>0$, from ergodic theorem we can get that $\lim_{n\to \infty}\frac{H_1(n)+H_2(n)}{n}=0$. Moreover, since $\sigma^{n+H_1(n)+H_2(n)}\omega\in \mathrm{Gap}(k_{\psi},\gamma_\psi)$, there exists $1\leq s\leq l(\sigma^{n+H_1(n)+H_2(n)}\omega)$ and $v'\in S'(\sigma^{n+H_1(n)+H_2(n)}\omega,s,k)$ with $k\leq k_{\psi}$ such that $m_{\sigma^{n+H_1(n)+H_2(n)}\omega}^{s\widetilde{v'}}-M_{\sigma^{n+H_1(n)+H_2(n)}\omega}^{sv'}>\gamma_\psi$. 
	
	For any $v\in \Sigma_{\omega,n}$, since $M(\sigma^n\omega)\leq M(\sigma^{n+H_1(n)}\omega)+H_1(n)<H_1(n)+H_2(n)$, recalling the operation $*$ defined in section~\ref{subsection: Random subshift, relativized entropy, topological pressure  and weak Gibbs measures}, there exists a word $s$ such that $v\ast s\in \Sigma_{\omega,n+H_1(n)+H_2(n)+1}$, and by construction $U_{\omega}^{v\ast sv'}$ and $U_{\omega}^{v\ast s\widetilde{v'}}$ are contiguous intervals. We simply denote the left one by $U_{\omega}^{vw}$ and the right by $U_{\omega}^{v\widetilde{w}}$.  We have $T_{\omega}^{v\ast}([M_{\omega}^{vw},m_{\omega}^{v\widetilde{w}}])=[M_{\sigma^{n+H_1(n)+H_2(n)}\omega}^{sv'}, m_{\sigma^{n+H_1(n)+H_2(n)}\omega}^{s\widetilde{v'}}]$. Now using Lagrange's finite-increment theorem we can get 
	$$m_{\omega}^{v\widetilde{w}}-M_{\omega}^{vw}\geq \gamma_\psi \exp(S_n\Psi(\omega,\vl)-o(n)).$$
	for any $\vl\in [v]_{\omega}$, since $H_1(n)+H_2(n)$ is a $o(n)$. Then the result holds since $\gamma_\psi$ is a constant. Moreover $k_v=|w|=H_1(n)+H_2(n)+k\leq H_1(n)+H_2(n)+k_{\psi}$ is also $o(n)$.
\end{proof}

\begin{remark}\label{rem3}Proposition \ref{measure v} implies that for $\mathbb{P}$-a.e.  $\omega\in \Omega$, for all $n\in \mathbb{N}$, for any $v\in \Sigma_{\omega,n}$, there exist some point $x$ of the form $x_{\omega}^{vw}\in\widering{I}_{\omega}^{v}$, with $|w|=o(n)$, such that
	$$
	\vw(\{x\})\geq \exp(S_n\Psi(\omega,\vl)-o(n))
	$$
	for any $\vl\in [v]_{\omega}$. For each $v\in \Sigma_{\omega,\ast}$, we fix such a point and denote it by $z_{\omega}^v$. These points will play a crucial role in proving the sharp lower bound for the lower Hausdorff spectrum of $\nu_\omega$.
\end{remark}
Arguments similar to  those leading to proposition~\ref{measure v} lead to the following remark.
\begin{remark}
	For $\mathbb{P}$-a.e. $\omega\in\Omega$, for all $n\in \mathbb{N}$ and $v\in \Sigma_{\omega,n}$, for any $\vl\in [v]_{\omega}$,
	\begin{equation*}\label{}
	|X_{\omega}^v|\geq \exp(S_n\Psi(\omega,\vl)-o(n)),
	\end{equation*}
	where the $o(n)$ does not depend on the choice of $v$. Consequently, we have
	\begin{equation*}\label{}
	\exp(S_n\Psi(\omega,\vl)-o(n))\leq |X_{\omega}^v|\leq \exp(S_n\Psi(\omega,\vl)+o(n)).
	\end{equation*}
\end{remark}

Next section provides first information on the lower local dimension of $\nu_\omega$ and the lower Hausdorff spectrum.

\section{Pointwise behavior of $\nu_\omega$ and an upper bound for the lower Hausdorff spectrum without using the multifractal formalism}\label{section:Pointwise behavior} The following definitions will be essential in making explicit the connection between the lower local dimension of $\nu_\omega$ and the conditioned ubiquity which partly governs the multifractal structure of $\nu_\omega$.
\begin{definition}\label{def 4.1}
	For $v\in \Sigma_{\omega,\ast}$, we set
	$$\ell_{\omega}^v=2|I_{\omega}^v|=2\widetilde\mu_{\omega}([v]_{\omega}),\quad
	\alpha_\omega^v=\frac{\widetilde{\Psi}(\omega,v)}{\log|I_{\omega}^v|},$$
	where $$\widetilde{\Psi}(\omega,v)=\sup_{\underline{v}\in [v]_{\omega}}\{S_{|v|}\Psi(\omega,\underline{v})\}$$
	(we define $\widetilde\Phi$ similarly).
	We notice that due to proposition~\ref{for n}(2) we have
	\begin{equation}\label{alphaomega}
	\alpha_\omega^v=\frac{\widetilde{\Psi}(\omega,v)}{\widetilde{\Phi}(\omega,v)}+o(1),
	\end{equation}
	where $o(1)$ tends uniformly in $v$ to 0 as $|v|$ tends to $\infty$.		
	
	For $x\in[0,1)$ and $n\geq 1$, let $v(\omega,n,x)$ be the unique element $v$ in $\Sigma_{\omega,n}$ such that $x\in I_{\omega}^{v}$. If $x=1$,  $v(\omega,n,1)$ is the unique $v\in\Sigma_{\omega,n}$ such that $1\in \overline{I_{\omega}^{v}}$.  If there is no confusion we will denote $v(\omega,n,x)$ by $v(n,x)$ or $x|_n$ for short.
	
	Define 		$$\alpha_{\omega}^n(x)=\alpha_{\omega}^{x|_n}\quad\text{and}\quad
	\alpha_{\omega}(x)=\liminf_{n\rightarrow\infty}\alpha_{\omega}^n(x).$$
	
	
	For $x\in[0,1]\setminus\{x_{\omega}^{vs}:v\in\Sigma_{\omega,\ast},s\in S'(\omega,v,1)\}$, the approximation degree 
	$\xi_{\omega}^x$ by the system $\{(x_{\omega}^{vs},\ell_{\omega}^v)\}_{v\in\Sigma_{\omega,\ast},s\in S'(\omega,v,1)}$ is defined as
	$$
	\xi_{\omega}^x=\limsup_{n\rightarrow\infty}\sup_{s\in S'(\omega,x|_n,1)}\frac{\log|x-x_{\omega}^{x|_n s}|}{\log\ell_{\omega}^{x|_n}}.
	$$	\end{definition}
Set 
$$
\Xi_\omega=\{x_{\omega}^{vs}:v\in\Sigma_{\omega,\ast},s\in S'(\omega,v,1)\}$$ 
and  
$$
\Xi'_\omega=\{\text{atoms of }\nu_\omega\}=\{x_{\omega}^{vs}:v\in\Sigma_{\omega,\ast},s\in S'(\omega,v,1) ,\text{ and } m_{\omega}^{v\widetilde{s}}-M_{\omega}^{vs}>0\}\subset \Xi_\omega.
$$

\begin{proposition}\label{Compare}
	\begin{enumerate}
		\item If $x\in\Xi'_\omega$, then $\nu_{\omega}(\{x\})>0$, thus $\dm({\nu_{\omega}},x)=0$.
		
		\item For any $x\in [0,1]$, if
		$x\notin \Xi_\omega$, then
		$$
		\frac{\aw(x)}{\xi_{\omega}^x}\leq \dl(\vw,x)\leq\aw(x),$$
		with the convention that  
		if $\xi_{\omega}^x=+\infty$ then $\frac{\alpha_{\omega}(x)}{\xi_{\omega}^x}=0$.
	\end{enumerate}

\end{proposition}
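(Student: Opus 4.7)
Part (i) is immediate from the Dirac expansion \eqref{eq1.1} of proposition~\ref{definition nu}: if $x=x_\omega^{vs}\in\Xi'_\omega$, then the coefficient $m_\omega^{v\widetilde s}-M_\omega^{vs}$ is strictly positive by the very definition of $\Xi'_\omega$, so $\nu_\omega(\{x\})>0$; hence $\nu_\omega(B(x,r))$ is bounded below by a positive constant for every $r>0$, forcing $\dl(\nu_\omega,x)=\du(\nu_\omega,x)=0$.

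For part (ii), fix $x\notin\Xi_\omega$, so that $x$ lies in the open interior $\widering{I}_\omega^{x|_n}$ for every $n$. For the upper bound $\dl(\nu_\omega,x)\le\alpha_\omega(x)$, the plan is to test the local dimension along the sequence $r_n=|I_\omega^{x|_n}|$. Remark~\ref{rem3} furnishes an atom $z_\omega^{x|_n}\in\widering{I}_\omega^{x|_n}$ with $\nu_\omega(\{z_\omega^{x|_n}\})\ge\exp(S_n\Psi(\omega,\vl)-o(n))$ for any $\vl\in[x|_n]_\omega$, and since $|x-z_\omega^{x|_n}|\le r_n$, the same lower bound applies to $\nu_\omega(B(x,r_n))$. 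Proposition~\ref{for n}(2) together with the variation hypothesis gives $\log r_n=\widetilde\Phi(\omega,x|_n)+o(n)$, and combined with \eqref{alphaomega} (and being careful that both logarithms are negative) this yields $\log\nu_\omega(B(x,r_n))/\log r_n\le\alpha_\omega^n(x)+o(1)$; passing to $\liminf_n$ delivers the desired bound $\alpha_\omega(x)$.

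For the lower bound $\dl(\nu_\omega,x)\ge\alpha_\omega(x)/\xi_\omega^x$, for each small $r>0$ let $n(r)$ be the largest integer $n$ with $B(x,r)\subset\widering{I}_\omega^{x|_n}$; note $n(r)\to\infty$ as $r\to 0$. By maximality, $B(x,r)$ is not contained in $\widering{I}_\omega^{x|_{n(r)+1}}$, so $r$ exceeds the distance from $x$ to some endpoint of $I_\omega^{x|_{n(r)+1}}$ lying in $\widering{I}_\omega^{x|_{n(r)}}$. A short combinatorial inspection of the refining partitions $\mathcal F_\omega^{\,n}$ identifies such an interior endpoint with a point $x_\omega^{x|_{n(r)}s^*}$ for some $s^*\in S'(\omega,x|_{n(r)},1)$. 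For any $\epsilon>0$, the definition of $\xi_\omega^x$ gives, for all $n$ large enough, $|x-x_\omega^{x|_n s}|\ge(\ell_\omega^{x|_n})^{\xi_\omega^x+\epsilon}$ uniformly in $s\in S'(\omega,x|_n,1)$, whence $r\ge(\ell_\omega^{x|_{n(r)}})^{\xi_\omega^x+\epsilon}$. On the other hand, $\nu_\omega(B(x,r))\le\nu_\omega(\widering{I}_\omega^{x|_{n(r)}})\le|X_\omega^{x|_{n(r)}}|\le\exp(\widetilde\Psi(\omega,x|_{n(r)})+o(n(r)))$ by proposition~\ref{for n}(1). Taking the ratio (keeping track of the negative signs of both logarithms) yields $\log\nu_\omega(B(x,r))/\log r\ge\alpha_\omega^{n(r)}(x)/(\xi_\omega^x+\epsilon)-o(1)$; letting $r\to 0$ and then $\epsilon\to 0$ concludes, and the convention $\alpha_\omega(x)/(+\infty)=0$ is absorbed by the universal bound $\dl(\nu_\omega,x)\ge 0$.

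The principal obstacle is the geometric identification of the interior endpoint as a point of the form $x_\omega^{x|_{n(r)}s^*}$ with $s^*\in S'$. Unlike in the deterministic cookie-cutter setting of \cite{BS2009}, $S'(\omega,v,1)$ can be empty in the random subshift context, so one must exploit the maximality of $n(r)$ to guarantee that the partition of $\widering{I}_\omega^{x|_{n(r)}}$ at level $n(r)+1$ is strictly finer than the trivial one-piece partition, hence carries at least one interior refinement endpoint, which is then necessarily of the claimed form. Once this geometric step is in place, the remaining estimates amount to standard manipulations of the uniform distortion bounds of proposition~\ref{for n}.
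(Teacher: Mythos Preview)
Your proof is correct and follows essentially the same route as the paper's own argument: for the upper bound you test along the scales $r_n=|I_\omega^{x|_n}|$ using the atoms $z_\omega^{x|_n}$ from remark~\ref{rem3}, and for the lower bound you introduce the maximal level $n(r)$ for which $B(x,r)$ sits inside the partition interval, identify the resulting interior endpoint as a point $x_\omega^{x|_{n(r)}s^*}$, and combine the definition of $\xi_\omega^x$ with the mass estimate $\nu_\omega(\widering I_\omega^{\,x|_{n(r)}})\le|X_\omega^{x|_{n(r)}}|$. The paper proceeds identically, differing only cosmetically: it selects a subsequence $(p_i)$ realizing the $\liminf$ defining $\alpha_\omega(x)$ rather than passing to the $\liminf$ over all $n$, and it works with $B(x,r)\subset I_\omega^{v(x,r)}$ (half-open) together with $B(x,r/2)$ to avoid endpoint atoms, whereas your choice $B(x,r)\subset\widering I_\omega^{\,x|_{n(r)}}$ handles that issue directly. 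Your extra care about the possible emptiness of $S'(\omega,v,1)$ is justified but not an obstacle---as you correctly note, maximality of $n(r)$ forces the level-$(n(r)+1)$ partition of $I_\omega^{x|_{n(r)}}$ to be non-trivial, which is exactly what the paper takes for granted in the sentence ``there exists an unique element $s$ of $S'(\omega,v(x,r),1)$ such that $x_\omega^{v(x,r)s}\in B(x,r)$''.
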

\begin{proof}
	$(i)$ is obvious. Let us prove $(ii)$. 
	
	Let $x\in [0,1]\setminus \Xi_\omega$ and $r>0$. If $r$ is small enough, the integer
	\begin{equation}\label{definition n}
	n_{\omega}^{x,r}=\max\{n:\exists v\in\Sigma_{\omega,n}\text{ such that }B(x,r)\subset I_{\omega}^{v}\}
	\end{equation}
	is well defined, and by definition of $c_\psi$ (see \eqref{cpsi}) we have  $n_{\omega}^{x,r}\leq \frac{-2\log r}{c_{\psi}}$. Moreover, $n_{\omega}^{x,r}\to \infty $ as $r\to 0$.
	By definition of $n_{\omega}^{x,r}$, if we denote by $v(x,r)$ the word  $v(\omega,n_{\omega}^{x,r},x)$,  there exists an unique  element $s$ of $S'(\omega,v(x,r),1)$ such that $$x_{\omega}^{v(x,r)s}\in B(x,r)\subset I_{\omega}^{v(x,r)}.$$
	The inclusion $B(x,r)\subset I_{\omega}^{v(x,r)}$ and proposition~\ref{for n}(i) imply
	\begin{eqnarray*}
		&&\nu_{\omega}(B(x,r/2))\leq\nu_{\omega}(I_{\omega}^{v(x,r)}\setminus\{\max I_{\omega}^{v(x,r)},\min I_{\omega}^{v(x,r)}\})\leq|X^{v(x,r)}_{\omega}| \leq |U_{\omega}^{v(x,r)}|\\
		&\leq& \exp(S_{|v(x,r)|}\Psi(\omega,\underline{v})+|v(x,r)|\epsilon(\Phi,\omega,|v(x,r)|)) \text{ where } \vl\in [v(x,r)]_{\omega}.
	\end{eqnarray*}

	%
	Now for any $\varepsilon>0$, by definition of $\xi_{\omega}^x$, for $r$ small enough we have $$r\geq |x-x_{\omega}^{v(x,r)s}|\geq(2|I_{\omega}^{v(x,r)}|)^{\xi_{\omega}^x+\varepsilon}.$$
	
	Moreover, again for $r$ small enough, we have $$\exp(\widetilde{\Psi}(\omega,v(x,r)))\leq |I_{\omega}^{v(x,r)}|^{\aw(x)-\varepsilon}$$
	by definition of $\aw(x)$. These estimates yield
	$$\vw(B(x,r/2))\leq \exp(\widetilde{\Psi}(\omega,v(x,r))+o(|v(x,r)|))\leq  r^{\frac{\aw(x)-\varepsilon}{\xi_{\omega}^x+\varepsilon}}\exp(o(n_{\omega}^{x,r})),$$
	and by letting $r$ tend to zero, since $n_{\omega}^{x,r}\leq \frac{-2\log r}{c_{\psi}}$, it follows that $ \dl(\vw,x)\geq\frac{\aw(x)-\varepsilon}{\xi_{\omega}^x+\varepsilon}$. From the arbitrariness of $\varepsilon$ we get that $\dl(\vw,x)\geq\frac{\aw(x)}{\xi_{\omega}^x}$.
	
	For the second inequality, let $\{p_i\}_{i\geq 1}$ be an increasing sequence of integers
	such that $\exp(\widetilde{\Psi}(\omega,x|_{p_i}))\geq |I_{\omega}^{x|_{p_i}}|^{\aw(x)+\epsilon}$ for all $i\geq 1$. Now recall remark~\ref{rem3}. Since $z_{\omega}^{x|_{p_i}}\in B(x,2|I_{\omega}^{x|_{p_i}}|)$, we have
	\begin{eqnarray*}
		\vw(B(x,2|I_{\omega}^{x|_{p_i}}|)) &\geq& \vw(\{z_{\omega}^{x|_{p_i}}\})\geq\exp(S_{p_i}\Psi(\omega,\vl))\exp(-o(p_i)) \\
		&\geq& |I_{\omega}^{x|_{p_i}}|^{\aw(x)+\epsilon}\exp(-o(p_i)).
	\end{eqnarray*}
	Also, $|I_{\omega}^{x|_{p_i}}|\leq \exp(-\frac{c_{\psi} p_i}{2})$ for $p_i$ large enough and $\epsilon$ is arbitrarily small. Consequently,   $\dl(\vw,x)\leq\aw(x).$
\end{proof}

\begin{remark}\label{limsupalpha}
	Arguments similar to those used to get  proposition~\ref{Compare} show that $$\overline{\dim}_{\mathrm{loc}}(\nu_\omega,x)\le \limsup_{n\to\infty}\alpha^n_\omega(x).$$
\end{remark}	

\begin{definition}
	Let $\alpha >0, \xi\geq 1$ and $\varepsilon>0$. A real number $x\in [0,1]$ is said to satisfy the property $\mathcal{P}(\omega,\alpha,\xi,\varepsilon)$ if there exists an increasing sequence of positive integers $(n_k)_{k\geq 1}$ such that for every $k\geq 1$, there exists $v\in \Sigma_{\omega,n_k}$ and $s\in S'(\omega,v,1)$, such that
	$x\in B(x_{\omega}^{vs},(\ell_{\omega}^{vs})^{\xi-\varepsilon})$ and $\alpha_{\omega}^v\in [\alpha-\varepsilon,\alpha+\varepsilon]$.
\end{definition}

We now introduce new sets.
\begin{definition}\label{def710}
	For $d\ge 0$, let $$F(\omega,d)=\left\{x\in (0,1)\left|\begin{array}{l}
	\forall\varepsilon>0, \exists \alpha\in \mathbb{Q}^+,\exists\xi\in \mathbb{Q},\xi\geq 1 \text{ such that }\\
	\alpha/\xi\leq d+2\varepsilon \text{ and } x \text{ satisfies the property } \mathcal{P}(\omega,\alpha,\xi,\varepsilon)
	\end{array}\right.\right\}.$$
\end{definition}

Now, the following proposition explores the relationship between the level sets $\El(\vw,d)$ and the sets $F(\omega,d)$.  
\begin{proposition}\label{prop8.3}
	For $\mathbb{P}$-a.e.  $\omega$, for any $ d\geq 0$, we have $(\El(\vw,d)\setminus\Xi_\omega\subset F(\omega,d)$.
\end{proposition}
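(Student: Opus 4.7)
My plan is the following: for $x\in\El(\vw,d)\setminus\Xi_\omega$ and $\varepsilon>0$, I will produce an infinite scale sequence $(n_k)$ along which $x$ is well-approximated by some boundary point $x_\omega^{vs}$ and $\alpha_\omega^v$ is nearly constant, then discretize into rationals $(\alpha,\xi)$ satisfying $\alpha/\xi\leq d+2\varepsilon$.

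Starting from $\dl(\vw,x)=d$, pick $r_k\downarrow 0$ with $\log\vw(B(x,r_k/2))/\log(r_k/2)\to d$. Setting $n_k=n_{\omega}^{x,r_k}$ and $v_k=x|_{n_k}$, the first part of the proof of Proposition~\ref{Compare}(ii) ensures $n_k\to\infty$, $B(x,r_k)\subset I_\omega^{v_k}$, the existence of $s_k\in S'(\omega,v_k,1)$ with $x_\omega^{v_ks_k}\in B(x,r_k)$, and
\begin{equation*}
\vw(B(x,r_k/2))\leq|U_\omega^{v_k}|\leq\exp(\widetilde{\Psi}(\omega,v_k)+o(n_k))=\exp(\alpha_\omega^{v_k}\log(\ell_\omega^{v_k}/2)+o(n_k)).
\end{equation*}
Define $\zeta_k:=\log r_k/\log\ell_\omega^{v_k}$, which lies in $[1-o(1),+\infty]$ since $r_k\leq\ell_\omega^{v_k}/4$. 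Taking logs in the display, dividing by $\log\ell_\omega^{v_k}=-\Theta(n_k)$ (valid as $c_\phi>0$), and using the hypothesis $\log\vw(B(x,r_k/2))=(d+o(1))\log(r_k/2)$ yields the key inequality $\alpha_\omega^{v_k}/\zeta_k\leq d+o(1)$.

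The sequence $(\alpha_\omega^{v_k})$ is bounded in $[0,C]$ (both $\widetilde\Psi$ and $\widetilde\Phi$ grow linearly in $n_k$), so extract a subsequence along which $\alpha_\omega^{v_k}\to\alpha^*\geq 0$ and $\zeta_k\to\zeta^*\in[1,+\infty]$, giving $\alpha^*/\zeta^*\leq d$. Next choose rationals $\alpha\in\mathbb Q^+$ with $|\alpha-\alpha^*|<\varepsilon/2$ and $\xi\in\mathbb Q$, $\xi\geq 1$, $\xi<\zeta^*+\varepsilon/2$, with $\alpha/\xi\leq d+2\varepsilon$ (when $\zeta^*=+\infty$, take $\xi$ large enough so that $\alpha/\xi<\varepsilon$). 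For $k$ large: $\alpha_\omega^{v_k}\in[\alpha-\varepsilon,\alpha+\varepsilon]$, and since $\zeta_k\to\zeta^*$ together with the weak Gibbs property applied one step giving $\log\ell_\omega^{v_ks_k}/\log\ell_\omega^{v_k}\to 1$, one has $\zeta_k\geq(\xi-\varepsilon)(1+o(1))$, hence $(\ell_\omega^{v_ks_k})^{\xi-\varepsilon}\geq r_k\geq|x-x_\omega^{v_ks_k}|$. Both conditions of $\mathcal P(\omega,\alpha,\xi,\varepsilon)$ therefore hold at $v=v_k$, $s=s_k$ along the subsequence, proving $x\in F(\omega,d)$.

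The main difficulty is the joint compactness extraction yielding convergence of both $\alpha_\omega^{v_k}$ and $\zeta_k$ along a single subsequence, combined with the discretization into rationals respecting $\alpha/\xi\leq d+2\varepsilon$, especially in the degenerate cases $\zeta^*=+\infty$ or $\alpha^*=0$; the one-step transfer from $\ell_\omega^{v_k}$ to $\ell_\omega^{v_ks_k}$ is routine from the weak Gibbs control and the $L^1$-integrability of $\phi$, but must be carefully tracked in the error budget.
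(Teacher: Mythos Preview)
Your proposal is correct and follows essentially the same strategy as the paper's own proof: use the maximality of $n_\omega^{x,r_k}$ to trap a boundary point $x_\omega^{v_ks_k}$ inside $B(x,r_k)$, compare the mass upper bound $\nu_\omega(B(x,r_k/2))\le |U_\omega^{v_k}|$ with the lower bound coming from $\dl(\nu_\omega,x)=d$, and extract rationals $(\alpha,\xi)$. The only cosmetic differences are that the paper defines its exponent $\xi_k$ through the actual distance $|x-x_\omega^{v_ks_k}|$ rather than through $r_k$, and it splits into the two cases $\limsup_k\xi_k<\infty$ and $\limsup_k\xi_k=\infty$ instead of your single compactness extraction in $[1,+\infty]$; also, the paper does not explicitly mention the one-step transfer from $\ell_\omega^{v_k}$ to $\ell_\omega^{v_ks_k}$ that you flag, since in its argument the exponent is taken with respect to $\ell_\omega^{v_k}$ throughout.
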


\begin{proof}
	Fix $d\ge 0$, $x\in \El(\vw,d)$ and $\varepsilon>0$. By definition of $\underline{\dim}_{\mathrm{loc}}(\nu_\omega,x)$, there exists a sequence $(r_k)_{k\geq 1}$ of positive numbers decreasing to zero such that for all $k\geq 1$ we have $\vw(B(x,r_k/2))\geq (r_k/2)^{d+\varepsilon}$. Now recall the definition of $n_{\omega}^{x,r}$ given in~\eqref{definition n}. If $x\notin \{x_{\omega}^{vs}:v\in\Sigma_{\omega,\ast},s\in S'(\omega,v,1)\}$, then $n_{\omega}^{x,r}\to \infty$ as $r\to 0$.
	Since $n_{\omega}^{x,r}$ is maximal, there exist $v=v(x,r)$ and  $s\in S'(\omega,v(x,r),1)$ such that $$x_{\omega}^{v(x,r)s}\in B(x,r)\subset I_{\omega}^{v(x,r)}.$$
	Then $\nu_{\omega}(B(x,r/2))\leq \nu_{\omega}(I_{\omega}^{v(x,r)}\setminus\{\max I_{\omega}^{v(x,r)},\min I_{\omega}^{v(x,r)}\})\leq |U_{\omega}^{v(x,r)}|$, so
	$$(r_k/2)^{d+\varepsilon}\leq \nu_{\omega}(B(x,r_k/2))\leq\exp(\widetilde{\Psi}(\omega, v(x,r_k))+o(|v(x,r_k)|)), $$
	where in the last inequality we used proposition~\ref{for n}.
	Consequently, due to the definition of $\alpha_\omega^v$ and proposition~\ref{for n} again, we have
	$$|I_{\omega}^{v(x,r_k)}|^{\alpha_{\omega}^{v(x,r_k)}+o(1)}=\exp(\widetilde{\Psi}(\omega, v(x,r_k)))\geq (r_k/2)^{d+\varepsilon} .$$
	Since $x_{\omega}^{v(x,r)s}\in B(x,r)$, $|x-x_{\omega}^{v(x,r_k)s}|\leq r_k$.  Writing $$r_k\geq|x-x_{\omega}^{v(x,r_k)s}|=(2(|I_{\omega}^{v(x,r_k)}|))^{\xi_k}\geq 2(|I_{\omega}^{v(x,r_k)}|)^{\xi_k},$$ we get
	$$|I_{\omega}^{v(x,r_k)}|^{\alpha_{\omega}^{v(x,r_k)}+o(1)}\geq (|I_{\omega}^{v(x,r_k)}|)^{\xi_k(d+\varepsilon)},\text{ and }\xi_k\geq 1.
	$$
	
	If $\limsup_{k\to \infty}\xi_k< \infty$, there exists $(\alpha,\xi)\in \mathbb{Q}^+\times(\mathbb{Q}\cap[1,+\infty))$ and an increasing sequence of integers $(k_s)_{s\geq 1}$ such that
	$$|\alpha_{\omega}^{v(x,r_{k_s})}-\alpha|\leq \varepsilon,  |\xi_{k_s}-\xi|\leq \varepsilon,\text{ and }
	\alpha/\xi\leq d+2\varepsilon.$$
	This means $x\in G(\omega,\alpha,\varepsilon,\xi)$,  $(\alpha,\xi)\in \mathbb{Q}^+\times(\mathbb{Q}\cap[1,+\infty))$ and $\alpha/\xi\leq h+2\varepsilon$.
	
	If $\limsup_{k\to \infty}\xi_k= \infty$, there exists $\alpha\in \mathbb{Q}^+$ and an increasing sequence of integer number $(k_s)_{s\geq 1}$ such that
	$$|\alpha_{\omega}^{v(x,r_{k_s})}-\alpha|\leq \varepsilon \text{ and }\xi_{k_s}\to \infty.$$
	Since $\alpha_{\omega}^{v(x,r_{k_s})}$ is bounded (for $\mathbb{P}$-a.e. $\omega$), there exists some $\xi\in \mathbb{Q}\cap[1,+\infty)$ with $\alpha/\xi\leq d+2\varepsilon$ such that $x$ satisfies $\mathcal{P}(\alpha,\varepsilon,\xi)$ (because if $\xi_1\leq \xi_2$ then $\mathcal{P}(\alpha,\varepsilon,\xi_2)$ implies $\mathcal{P}(\alpha,\varepsilon,\xi_1)$).
	
	Finally,
	\begin{equation}\label{inclusion}
	(\El(\vw,d)\setminus\Xi_\omega\subset F(\omega,d).
	\end{equation}
\end{proof}

\begin{definition}For every $\alpha,\varepsilon>0$ and $\xi\geq 1$, let
	$$G(\omega,\alpha,\varepsilon,\xi)=\bigcap_{N\geq1} \bigcup_{n\geq N} \bigcup_{v\in\Sigma_{\omega,n}, s\in S'(\omega,v,1):\alpha_{\omega}^v\in [\alpha-\varepsilon,\alpha+\varepsilon]}B(x_{\omega}^{vs},(\ell_{\omega}^{v})^{\xi}).$$
\end{definition}

It is easily seen that
$$
F(\omega,d)\subset \bigcup_{\alpha\in \mathbb{Q}^+}\bigcup_{\xi\in \mathbb{Q}\cap[1,+\infty),\alpha/\xi\leq d+2\varepsilon}G(\omega,\alpha,\varepsilon,\xi).$$

%
\begin{lemma}\label{control G}
	There exists $C>0$ such that  for $\mathbb{P}$-a.e. $\omega$, for $\varepsilon>0$ small enough, for all rationals $\alpha>0$ and $\xi\geq 1$,
	$$\dim_H G(\omega,\alpha,\varepsilon,\xi)\leq C\varepsilon+\frac{\max (\mathcal{T}^\ast(\alpha-\varepsilon),\mathcal{T}^\ast(\alpha),\mathcal{T}^\ast(\alpha+\varepsilon))}{\xi}. $$
\end{lemma}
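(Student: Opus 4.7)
Being a $\limsup$ of balls, $G(\omega,\alpha,\varepsilon,\xi)$ admits for each $N\geq 1$ the natural covering by the balls $B(x_\omega^{vs},(\ell_\omega^v)^\xi)$ with $n\geq N$, $v\in\Sigma_{\omega,n}$ satisfying $\alpha_\omega^v\in[\alpha-\varepsilon,\alpha+\varepsilon]$, and $s\in S'(\omega,v,1)$. Plugging this into the Hausdorff outer measure and noting that $\#S'(\omega,v,1)\leq l(\sigma^n\omega)=e^{o(n)}$ by the integrability of $\log l$, matters reduce to proving that for every $t$ strictly greater than $C\varepsilon+\max(\mathcal{T}^*(\alpha-\varepsilon),\mathcal{T}^*(\alpha),\mathcal{T}^*(\alpha+\varepsilon))/\xi$,
\begin{equation*}
\sum_{n\geq N}\sum_{v\in\Sigma_{\omega,n}:\,\alpha_\omega^v\in[\alpha-\varepsilon,\alpha+\varepsilon]}(\ell_\omega^v)^{\xi t}
\end{equation*}
is finite and tends to $0$ as $N\to\infty$; this gives $\dim_H G\leq t$.

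\textbf{Thermodynamic coding of each term.} Proposition~\ref{for n} gives $\log\ell_\omega^v=\widetilde\Phi(\omega,v)+o(n)$ and $\log|I_\omega^v|=\widetilde\Phi(\omega,v)+o(n)$, so the constraint $\alpha_\omega^v\in[\alpha-\varepsilon,\alpha+\varepsilon]$ forces $|\widetilde\Psi(\omega,v)-\alpha\widetilde\Phi(\omega,v)|\leq\varepsilon|\widetilde\Phi(\omega,v)|+o(n)$ uniformly in $v$. Multiplying and dividing by $e^{q\widetilde\Psi-\mathcal{T}(q)\widetilde\Phi}$ for a real parameter $q$ then yields
\begin{equation*}
(\ell_\omega^v)^{\xi t}\leq e^{q\widetilde\Psi-\mathcal{T}(q)\widetilde\Phi}\cdot e^{(\xi t+\mathcal{T}(q)-q\alpha-|q|\varepsilon)\widetilde\Phi+o(n)}.
\end{equation*}
Provided $\xi t\geq q\alpha-\mathcal{T}(q)+|q|\varepsilon+\delta$ for some $\delta>0$, the second factor is at most $e^{-\delta (c_\phi/2) n}$ eventually, since $\widetilde\Phi(\omega,v)\leq -(c_\phi/2)n$ follows from the ergodic theorem applied to $-\phi$ together with $c_\phi>0$. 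Combined with the partition-function estimate $\sum_{v\in\Sigma_{\omega,n}}e^{q\widetilde\Psi-\mathcal{T}(q)\widetilde\Phi}\leq e^{nP(q\Psi-\mathcal{T}(q)\Phi)+o(n)}=e^{o(n)}$ (proposition~\ref{eigencon} together with the defining relation $P(q\Psi-\mathcal{T}(q)\Phi)=0$), this produces an inner-sum bound of the form $e^{-\delta (c_\phi/2) n+o(n)}$, which is summable in $n$.

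\textbf{Choice of $q$.} To minimize the required $\xi t$, I split into three regimes according to the position of the peak abscissa $\mathcal{T}'(0)$ of $\mathcal{T}^*$ relative to $[\alpha-\varepsilon,\alpha+\varepsilon]$. If $\alpha+\varepsilon\leq\mathcal{T}'(0)$, take $q=q_+\geq 0$ with $\mathcal{T}'(q_+)=\alpha+\varepsilon$; the cancellation $q_+\alpha-\mathcal{T}(q_+)+|q_+|\varepsilon=q_+(\alpha+\varepsilon)-\mathcal{T}(q_+)=\mathcal{T}^*(\alpha+\varepsilon)$ makes it enough to require $\xi t>\mathcal{T}^*(\alpha+\varepsilon)$. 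Symmetrically, if $\alpha-\varepsilon\geq\mathcal{T}'(0)$ use $q=q_-\leq 0$ with $\mathcal{T}'(q_-)=\alpha-\varepsilon$ to obtain $\mathcal{T}^*(\alpha-\varepsilon)$. In the straddle case $\mathcal{T}'(0)\in(\alpha-\varepsilon,\alpha+\varepsilon)$, take $q=0$, so the relevant quantity is $-\mathcal{T}(0)=\mathcal{T}^*(\mathcal{T}'(0))$; by concavity and local Lipschitz regularity of $\mathcal{T}^*$ at its maximum, this differs from $\max(\mathcal{T}^*(\alpha-\varepsilon),\mathcal{T}^*(\alpha),\mathcal{T}^*(\alpha+\varepsilon))$ by at most $C\varepsilon$, where $C$ depends only on $\mathcal{T}$. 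In all three cases, $\xi t$ is bounded above by the required maximum plus $C\xi\varepsilon$.

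\textbf{Main obstacle.} The principal technical difficulty is that the $o(n)$ error in the partition-function estimate depends on the variations of $q\Psi-\mathcal{T}(q)\Phi$, and hence on $|q|$, which in cases 1 and 2 may blow up as $\alpha$ approaches the endpoints of $[\mathcal{T}'(+\infty),\mathcal{T}'(-\infty)]$. The reassuring fact is that for $\alpha$ outside this interval, a standard large-deviations argument on the words of $\Sigma_{\omega,n}$ shows that the constraint $\alpha_\omega^v\in[\alpha-\varepsilon,\alpha+\varepsilon]$ is violated for all but finitely many $n$ almost surely, so $G=\emptyset$ and the bound is vacuous. For $\alpha$ in the interior, the dependence of $|q_\pm|$ on $\alpha$ enters only in the ``eventually in $n$'' statement, which is innocuous pointwise in $\alpha$ for each fixed $\omega$; the constant $C$ is then genuinely absorbed from the straddle case alone and is independent of $\alpha$. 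Making this dichotomy rigorous, while uniformly controlling the variation errors supplied by proposition~\ref{for n}, constitutes the chief technical task.
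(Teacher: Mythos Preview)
Your proposal is correct and follows essentially the same approach as the paper: the natural $\limsup$ covering, the Chebyshev-type tilt by $e^{q\widetilde\Psi-\mathcal{T}(q)\widetilde\Phi}$ combined with $P(q\Psi-\mathcal{T}(q)\Phi)=0$, and the three-regime choice of $q$ (sign according to the position of $\alpha\pm\varepsilon$ relative to the peak of $\mathcal{T}^*$, with $q=0$ in the straddle case yielding the $C\varepsilon$ term via concavity of $\mathcal{T}^*$). Two small refinements the paper makes that you should incorporate: it does not assume $\mathcal{T}$ is differentiable at $0$, instead splitting according to $\mathcal{T}'(0-)$ and $\mathcal{T}'(0+)$ (when these differ the straddle case is empty for $\varepsilon$ small), and it does not assume an exact $q_\pm$ with $\mathcal{T}'(q_\pm)=\alpha\pm\varepsilon$ exists, instead choosing $q$ so that $(\alpha\pm\varepsilon)q-\mathcal{T}(q)$ is within $\eta/2$ of $\mathcal{T}^*(\alpha\pm\varepsilon)$, which also covers the boundary values $\alpha\pm\varepsilon=\mathcal{T}'(\pm\infty)$.
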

\begin{proof} It is enough to prove the result for fixed $\epsilon>0$ and rational numbers $\alpha>0$ and $\xi\ge 1$.
	For any $N\geq 1$, let $\delta_N=\sup_{v\in \Sigma_{\omega,N}}\ell_{\omega}^{v}$. By construction, if $G(\omega,\alpha,\varepsilon,\xi)\neq\emptyset$, given $s\in\mathbb{R}$ we have
	$$\mathcal{H}_{\delta_N}^s(G(\omega,\alpha,\varepsilon,\xi))\leq \sum_{n\geq N}\sum_{v\in\Sigma_{\omega,n}: \alpha-\varepsilon\leq\alpha_{\omega}^v\leq\alpha+\varepsilon }l(\sigma^n\omega) 2^s (\ell_{\omega}^{v})^{s\xi}$$
	where we naturally extend the definition of  $\mathcal H^s_\delta$  to negative $s$.
	
	Here, to avoid confusions,  we recall that $l(\omega)$ is the number of types in the subshift, and $\ell_{\omega}^{v}$ is the length of the interval $I_{\omega}^v$.
	
	\medskip
	
	\noindent {\bf Case 1: $\alpha\leq \mathcal{T}'(0-)-\varepsilon$.} Since $\alpha_{\omega}^v=\frac{\widetilde{\Psi}(\omega,v)}{\log |I_{\omega}^v|}$, then for any $q\geq 0$ one has:
	\begin{eqnarray*}
		\mathcal{H}_{\delta_N}^s (G(\omega,\alpha,\varepsilon,\xi))&\leq & 2^s\sum_{n\geq N}\sum_{v\in\Sigma_{\omega,n}:q\widetilde{\Psi}(\omega,v)\geq q(\alpha+\varepsilon)\log |I_{\omega}^v|} l(\sigma^n\omega)(\ell_{\omega}^{v})^{s\xi}\\
		&\leq &   4^s\sum_{n\geq N}\sum_{v\in\Sigma_{\omega,n}} l(\sigma^n\omega)\exp(q\widetilde{\Psi}(\omega,v)-q(\alpha+\varepsilon)\log |I_{\omega}^v|)\\
		&&\cdot\exp(s\xi\log |I_{\omega}^v|).
	\end{eqnarray*}
	Now take $s=(\eta+\mathcal T^*(\alpha+\varepsilon))/\xi$ with $\eta>0$, we can get 
	$$\mathcal{H}_{\delta_N}^s (G(\omega,\alpha,\varepsilon,\xi))\leq \sum_{n\geq N}\sum_{v\in\Sigma_{\omega,n}}\exp(q\Psi(\omega,\vl)-(q(\alpha+\varepsilon)-\eta-\mathcal T^*(\alpha+\varepsilon))\Phi(\omega,\vl)) \cdot \exp(o(n))$$
	
	Since $\alpha\leq \mathcal{T}'(0-)-\varepsilon$, that is $\alpha+\varepsilon\leq \mathcal{T}'(0-)$, there exists $q\geq 0$ such that 
	$$\mathcal T^*(\alpha+\varepsilon)=(\alpha+\varepsilon)q-\mathcal T(q)-\gamma_{q},$$
	with $0\leq \gamma_q\leq \frac{\eta}{2}$.
	
	Then, 		
	\begin{eqnarray*}
		\mathcal{H}_{\delta_N}^s(G(\omega,\alpha,\varepsilon,\xi))
		\leq \sum_{n\geq N}\sum_{v\in\Sigma_{\omega,n}}\exp(q\Psi(\omega,\vl)-(\mathcal{T}(q)-\eta/2)\Phi(\omega,\vl)) \cdot \exp(o(n)),
	\end{eqnarray*}
	where $\underline v$ is any element of $[v]_\omega$.  Then
	\begin{eqnarray*}
		\mathcal{H}_{\delta_N}^s(G(\omega,\alpha,\varepsilon,\xi))
		&\leq& \sum_{n\geq N}\sum_{v\in\Sigma_{\omega,n}}\widetilde{\mu}_{\omega}^{q\Psi-\mathcal{T}(q)\Phi}([v]_{\omega})\exp(-\eta c_{\Phi} n/2+o(n)) \leq \sum_{n\geq N}\exp(-\frac{\eta c_{\Phi}}{4}n)
	\end{eqnarray*}
	for $n$ large enough (recall $c_{\Phi}=c_{\phi}>0$). Consequently,
	$\lim_{N\to\infty}\mathcal{H}_{\delta_N}^s G(\omega,\alpha,\varepsilon,\xi)=0$. However, if $\mathcal T^*(\alpha+\varepsilon)<0$, we can choose $\eta$ and $q$ such that $s<0$, in which case it is necessary that $\lim_{N\to\infty}\mathcal{H}_{\delta_N}^s =+\infty$ if $G(\omega,\alpha,\varepsilon,\xi)$ is not empty. Consequently, if $\mathcal T^*(\alpha+\varepsilon)<0$, then $G(\omega,\alpha,\varepsilon,\xi)=\emptyset$. Otherwise, $\dim_H G(\omega,\alpha,\varepsilon,\xi)\leq (\eta+\mathcal T^*(\alpha+\varepsilon))/\xi$. This holds for all $\eta>0$, so $\dim_H G(\omega,\alpha,\varepsilon,\xi)\le \mathcal T^*(\alpha+\varepsilon)$.
	
	\medskip
	
	\noindent {\bf Case 2: $\alpha\geq \mathcal{T}'(0+)+\varepsilon$.}
	It is almost the same as before except that one needs to use $q\leq0$ and
	$q\widetilde{\Psi}(\omega,v)\geq q(\alpha-\varepsilon)\log |I_{\omega}^v|$.
	
	\medskip
	
	\noindent {\bf Case 3:  $\alpha\in (\mathcal{T}'(0-)-\varepsilon, \mathcal{T}'(0+)+\varepsilon)$.}
	Two situations must be considered.If $\mathcal{T}'(0-)-\mathcal{T}'(0+)>0$, we can assume  $\varepsilon<\frac{\mathcal{T}'(0-)-\mathcal{T}'(0+)}{2}$. Then $\mathcal{T}'(0-)-\varepsilon>\mathcal{T}'(0+)+\varepsilon,$ so that Case 3 is empty. If $\mathcal{T}'(0-)=\mathcal{T}'(0+)$, then $\mathcal{T}$ is differentiable at $0$. Take $s=\frac{\eta+\mathcal{T}(0)}{\xi}$ with $\eta>0$. Then
	$$\mathcal{H}_{\delta_N}^sG(\omega,\alpha,\varepsilon,\xi)\leq \sum_{n\geq N}\exp(-\frac{\eta nc_{\Phi}}{2}-nP(\mathcal{T}(0)\Phi))=\sum_{n\geq N}\exp(-\frac{\eta nc_{\phi}}{2})<\infty.$$
	Here we used the fact that by definition we have $P(-\mathcal{T}(0)\Phi)=0$.
	
	This yields $\dim_H G(\omega,\alpha,\varepsilon,\xi)\leq \frac{-\mathcal{T}(0)}{\xi}$ since we can choose $\eta$ arbitrarily close to $0$. Since $\mathcal{T}^*$ is concave, for $\varepsilon$ small enough there exists some $C>0$ such that
	$\frac{-\mathcal{T}(0)}{\xi}=\frac{\mathcal{T}^*(\mathcal{T}'(0))}{\xi}\leq C\varepsilon+\frac{\mathcal{T}^*(\alpha)}{\xi}.$	
\end{proof}
As a consequence of proposition~\ref{prop8.3} and lemma~\ref{control G}, the following corollary will provide us with a first upper bound for $\dim_H \underline E(\nu_{\omega},d)$ which will turn out to be sharp on $ [0,\mathcal{T}'(t_0-)]$, recalling that $t_0$ is the unique root of the equation $P(t\Psi)=0$ and  $\dim_H X_{\omega}=t_0$ for $\mathbb{P}-$ a.e. $\omega\in\Omega$. 
\begin{corollary}\label{control Fh}
	For $\mathbb{P}$-a.e. $\omega$, for all $d\geq 0$,
	$$
	\dim_H \underline E(\nu_{\omega},d)\le \dim_H F(\omega,d)\leq d\cdot \sup_{\alpha> 0}\frac{\mathcal{T}^*(\alpha)}{\alpha}= d\cdot t_0.
	$$
	
\end{corollary}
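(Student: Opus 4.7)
The corollary combines two ingredients already established: the inclusion of Proposition~\ref{prop8.3} on one side, and the dimension estimate of Lemma~\ref{control G} on the other; only an optimization step, resting on the Bowen--Ruelle identity $\mathcal T(t_0)=0$, is missing.

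The first inequality is nearly immediate. The set $\Xi_\omega=\{x_\omega^{vs}\}$ is countable (as $\Sigma_{\omega,*}$ is), hence of Hausdorff dimension $0$. Proposition~\ref{prop8.3} gives $\underline E(\nu_\omega,d)\setminus\Xi_\omega\subset F(\omega,d)$, and countable stability of $\dim_H$ yields
$$
\dim_H \underline E(\nu_\omega,d)\le \max\bigl(\dim_H F(\omega,d),0\bigr),
$$
which is absorbed by the second inequality together with $d\cdot t_0\ge 0$.

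For the second inequality, the plan is to cover $F(\omega,d)$ by the sets $G(\omega,\alpha,\varepsilon,\xi)$ of Lemma~\ref{control G}. From the definition of $F(\omega,d)$, used with rational $\varepsilon>0$ arbitrarily small, and the inclusion noted just before Lemma~\ref{control G},
$$
F(\omega,d)\subset \bigcup_{\substack{\alpha\in\mathbb{Q}^+,\ \xi\in \mathbb{Q}\cap[1,\infty)\\ \alpha/\xi\le d+2\varepsilon}} G(\omega,\alpha,\varepsilon,\xi).
$$
Applying Lemma~\ref{control G} to each $G$ set, using countable stability of $\dim_H$, then letting $\varepsilon\to 0$ and invoking the continuity of the concave function $\mathcal T^*$ on the interior of its effective domain (the constraint $\mathcal T^*(\alpha)\ge 0$ emerges automatically from the emptiness clause in the proof of Lemma~\ref{control G}), one obtains
$$
\dim_H F(\omega,d)\le \sup\Big\{\tfrac{\mathcal T^*(\alpha)}{\xi}:\alpha>0,\ \xi\ge 1,\ \alpha/\xi\le d,\ \mathcal T^*(\alpha)\ge 0\Big\}.
$$

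It then remains to evaluate this supremum. For each admissible $\alpha>0$ the constraints $\alpha/\xi\le d$ and $\xi\ge 1$ force $\xi\ge\max(1,\alpha/d)$; minimizing $\xi$ reduces the problem to $\sup_{\alpha>0}\min\bigl(\mathcal T^*(\alpha),\,d\cdot\mathcal T^*(\alpha)/\alpha\bigr)$. The crucial algebraic identity is $\sup_{\alpha>0}\mathcal T^*(\alpha)/\alpha=t_0$: from the Legendre formula,
$$
\mathcal T^*(\alpha)=\inf_{q}\bigl(\alpha q-\mathcal T(q)\bigr)\le \alpha t_0-\mathcal T(t_0)=\alpha t_0,
$$
where $\mathcal T(t_0)=0$ because the defining relation $P(q\Psi-\mathcal T(q)\Phi)=0$ at $q=t_0$ reduces to the Bowen--Ruelle equation $P(t_0\Psi)=0$ characterizing $t_0$; the bound is saturated by any $\alpha^*$ in the supergradient of $\mathcal T$ at $t_0$. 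The same inequality also gives $\mathcal T^*(\alpha)\le \alpha t_0\le d t_0$ whenever $\alpha\le d$, so both terms in the minimum are dominated by $d\cdot t_0$, completing the proof. The only nontrivial step is the passage to the limit $\varepsilon\to 0$ in the covering (where the continuity of $\mathcal T^*$ must be invoked carefully at the boundary of its effective domain); everything else reduces to elementary convex analysis.
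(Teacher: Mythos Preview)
Your proposal is correct and follows essentially the same approach as the paper: both use the covering $F(\omega,d)\subset\bigcup G(\omega,\alpha,\varepsilon,\xi)$, apply Lemma~\ref{control G}, let $\varepsilon\to 0$, and then identify $\sup_{\alpha>0}\mathcal T^*(\alpha)/\alpha=t_0$ via the Legendre inequality $\mathcal T^*(\alpha)\le \alpha t_0-\mathcal T(t_0)=\alpha t_0$. Your treatment of the optimization step is in fact slightly more explicit than the paper's (which passes directly from $\sup_{\alpha/\xi\le d}\mathcal T^*(\alpha)/\xi$ to $d\cdot\sup_{\alpha>0}\mathcal T^*(\alpha)/\alpha$ without spelling out the case split on $\xi\ge\max(1,\alpha/d)$).
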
	
\begin{proof}
	For any $\varepsilon>0$, we saw that
	$$
	F(\omega,d)\subset \cup_{\alpha\in \mathbb{Q}^+}\cup_{\xi\in \mathbb{Q}\cap[1,+\infty),\alpha/\xi\leq d+2\varepsilon}G(\omega,\alpha,\varepsilon,\xi).$$
	Thus, from lemma \ref{control G} one has
	$$\dim_H F(\omega,d)\leq \sup_{\alpha\in \mathbb{Q}^+,\, \xi\in \mathbb{Q}\cap[1,+\infty):\alpha/\xi\leq d+2\varepsilon}C\varepsilon+\frac{\max (\mathcal{T}^\ast(\alpha-\varepsilon),\mathcal{T}^\ast(\alpha),\mathcal{T}^\ast(\alpha+\varepsilon))}{\xi}.$$
	Letting $\varepsilon$ tends to 0 yields
	$$\dim_H F(\omega,d)\leq \sup_{\alpha\ge 0 ,\xi\ge 1:\, \alpha/\xi\leq d}\frac{\mathcal{T}^*(\alpha)}{\xi}\leq d\cdot \sup_{\alpha> 0}\frac{\mathcal{T}^*(\alpha)}{\alpha}= d\cdot t_0.$$
	To getting the last equality, at first we notice that since $\mathcal{T}(t_0)=0$, we have  $\sup_{\alpha>0}\frac{\mathcal{T}^*(\alpha)}{\alpha}\geq \frac{\alpha t_0- \mathcal{T}(t_0)}{\alpha}=t_0$.
	Next, we know that for any $\alpha>0$, $\inf_{q}\{q\alpha-t_0\alpha-\mathcal{T}(q)\}\leq 0$, so  $\frac{\inf_{q}\{q\alpha-\mathcal{T}(q)\}}{\alpha}\leq  t_0,$
	which is $\frac{\mathcal{T}^*(\alpha)}{\alpha}\leq t_0$. Finally $\sup_{\alpha>0}\frac{\mathcal{T}^*(\alpha)}{\alpha}\leq t_0$.
	Now, recall~\eqref{inclusion}. Since the set of atoms of $\nu_\omega$ is countable, we get the desired conclusion for $\dim_H \underline E(\nu_\omega,d)$.
\end{proof}

\section{Lower bound for the $L^q$-spectrum and upper bound for the lower Hausdorff spectrum}
\begin{proposition}\label{proposition tau}
	For $\mathbb{P}$-a.e. $\omega\in\Omega$, for every $q\in \mathbb{R}$, we have $\tau_{\vw}(q)\geq\min(\mathcal{T}(q),0):=\widetilde{\mathcal{T}}(q)$.
\end{proposition}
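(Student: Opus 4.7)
The plan is to bound, for small $r > 0$ and each family $\{B_i\}$ of disjoint closed balls of radius $r$ with centers in $\supp(\nu_\omega)$, the partition sum $\Sigma_r(q) := \sum_i \nu_\omega(B_i)^q$; the bound $\Sigma_r(q) \leq r^{\widetilde{\mathcal T}(q) + o(1)}$ then gives $\tau_{\nu_\omega}(q) \geq \widetilde{\mathcal T}(q)$ from the definition of the $L^q$-spectrum.

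The key device is the stopping-time partition $\mathcal V_r = \{v \in \Sigma_{\omega,*} : |I_\omega^v| < r \leq |I_\omega^{v'}|\}$, where $v'$ denotes the parent of $v$. Assumption~\eqref{cphi} and Proposition~\ref{for n}(ii) give $|I_\omega^v| \in [e^{-o(|\log r|)} r, r)$ and $|v| \asymp |\log r|$ uniformly on $\mathcal V_r$. Since $\mathcal V_r$ is a partition of $\Sigma_\omega$ and $P(q\Psi - \mathcal T(q)\Phi) = 0$, the weak Gibbs probability measure $\widetilde\mu_\omega^{q\Psi - \mathcal T(q)\Phi}$ from Proposition~\ref{eigen} satisfies $\sum_{v \in \mathcal V_r} \widetilde\mu_\omega^{q\Psi - \mathcal T(q)\Phi}([v]) = 1$. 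Proposition~\ref{for n}(ii), with variation errors $|v|\,\varepsilon(\cdot,\omega,|v|) = o(|\log r|)$ absorbed, then yields
\[
\sum_{v \in \mathcal V_r} |X_\omega^v|^q |I_\omega^v|^{-\mathcal T(q)} \leq e^{o(|\log r|)},
\]
hence via $|I_\omega^v| \leq r$ the cell estimate $\sum_{v \in \mathcal V_r} |X_\omega^v|^q \leq r^{\mathcal T(q) + o(1)}$. The same bound summed over all `ancestor' cells $|I_\omega^u| \geq r$ (levels $l \leq L(r) \asymp |\log r|$) gives $\sum_{|I_\omega^u| \geq r} |X_\omega^u|^q |I_\omega^u|^{-\mathcal T(q)} \leq L(r) \cdot e^{o(L(r))} = r^{o(1)}$.

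I decompose $\nu_\omega(B_i)$ into an `interior' part from the open cells $\mathring I_\omega^v$, $v \in \mathcal V_r$ meeting $B_i$ (bounded by $|X_\omega^v|$ via Proposition~\ref{for n}(i), with $O(1)$ cells per $B_i$), and a `boundary' part from the atoms at the interfaces $F_{\mu_\omega}(M_\omega^v)$. Each such boundary atom arises from the `generating' ancestor $u$ at level $l-1$ where the gap between two adjacent sibling cells first appears, with mass bounded by the diameter $|X_\omega^u|$; disjointness of $\{B_i\}$ ensures each atom lies in at most one ball. For $q \in [0, 1]$, subadditivity $(a+b)^q \leq a^q + b^q$ combined with the $O(1)$-multiplicity yields
\[
\Sigma_r(q) \leq C \sum_{v \in \mathcal V_r} |X_\omega^v|^q + C \sum_{u : |I_\omega^u| \geq r} |X_\omega^u|^q,
\]
while $q \geq 1$ yields the trivial bound $\Sigma_r(q) \leq 1$. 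Using $|I_\omega^u| \geq r$, $|I_\omega^u|^{\mathcal T(q)} \leq \max(1, r^{\mathcal T(q)}) = r^{\min(\mathcal T(q), 0)}$; factoring this out of the pressure-weighted ancestor sum reduces it to $\leq r^{\min(\mathcal T(q), 0)} \cdot r^{o(1)} = r^{\widetilde{\mathcal T}(q) + o(1)}$. Combined with the cell estimate, $\Sigma_r(q) \leq r^{\widetilde{\mathcal T}(q) + o(1)}$, whence $\tau_{\nu_\omega}(q) \geq \widetilde{\mathcal T}(q)$. For $q < 0$, Remark~\ref{rem3} provides in each $B_i$ an atom of mass $\geq e^{\widetilde\Psi(\omega, v) - o(|v|)}$ for a cell $v$ at scale $\asymp r$, giving $\nu_\omega(B_i)^q \leq C |X_\omega^v|^q$ and reducing to the same thermodynamic estimate.

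The main obstacle is the treatment of the boundary atoms, whose masses are controlled by ancestor cells rather than by the local stopping-time cells; the pressure relation $P(q\Psi - \mathcal T(q)\Phi) = 0$ must be invoked uniformly across all ancestor levels. The $\min(\mathcal T(q), 0)$ shape of the bound emerges from the inequality $|I_\omega^u|^{\mathcal T(q)} \leq \max(1, r^{\mathcal T(q)})$, while sublinear random fluctuations from the variations of $\Psi$ and $\Phi$ are absorbed into the $e^{o(|\log r|)}$ factors $\mathbb P$-almost surely.
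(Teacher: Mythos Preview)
Your argument is correct and follows essentially the same route as the paper's: a stopping-time partition of $[0,1)$ by intervals $I_\omega^v$ at scale $\asymp r$, a decomposition of $\nu_\omega(B_i)$ into an interior part bounded by $|X_\omega^v|$ and a boundary-atom part controlled by ancestor diameters $|X_\omega^u|$, and the thermodynamic identity $P(q\Psi-\mathcal T(q)\Phi)=0$ applied via the auxiliary weak Gibbs measure $\widetilde\mu_\omega^{q\Psi-\mathcal T(q)\Phi}$. Two points deserve comment.

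First, your case splitting $q<0$, $q\in[0,1]$, $q\ge 1$ differs slightly from the paper's $q\le 0$, $q\in(0,t_0)$, $q\ge t_0$. The paper handles $q\in[t_0,1)$ by concavity of $\tau_{\nu_\omega}$ (interpolating between $\tau_{\nu_\omega}(t_0)\ge 0$ and $\tau_{\nu_\omega}(1)=0$), whereas you cover this range directly through the subadditivity argument; both are fine. For $q<0$ you invoke Remark~\ref{rem3} to place a heavy atom inside each $B_i$, while the paper uses the slightly more direct observation $\nu_\omega(B_i)\ge \nu_\omega(I_\omega^{v})\ge |X_\omega^{v}|$ for an interval $I_\omega^v\subset B_i$; again, both work.

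Second, with your stopping time $\mathcal V_r=\{v:|I_\omega^v|<r\le |I_\omega^{v^{\prime}}|\}$ the claim ``$O(1)$ cells per $B_i$'' is not correct: a parent of size $\ge r$ can have $l(\sigma^{|v^{\prime}|}\omega)$ children in $\mathcal V_r$, and this is unbounded. What \emph{is} true is the dual statement that each cell $I_\omega^v$ (of length $<r$) meets at most two of the disjoint balls $B_i$, and this is all you need after interchanging the sums $\sum_i\sum_v$ to reach $\Sigma_r(q)\le 2\sum_{v\in\mathcal V_r}|X_\omega^v|^q+\sum_{|I_\omega^u|\ge r}l(\sigma^{|u|}\omega)|X_\omega^u|^q$. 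The paper sidesteps this by stopping instead at the last level where $|I_\omega^v|\ge 2r$, so that each $B_i$ is genuinely covered by at most two cells (at the cost of each cell meeting $e^{o(|\log r|)}$ balls). Either convention leads to the same estimate; just correct the direction of the multiplicity bound.
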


Due to \eqref{MFI}, Proposition~\ref{proposition tau} gives the sharp upper bound for the lower Hausdorff spectrum.

Since  the functions $\tau_{\vw}$ and $\widetilde{\mathcal{T}}$ are both continuous, we just need to prove that the inequality of proposition~\ref{proposition tau}  holds on  a dense and countable subset of $\mathbb{R}$, which amounts to prove it for any fixed $q\in\mathbb{R}$, almost surely.

\begin{proof}
	
	
	Let $r>0$ and consider $\mathcal{B}=\{B_i\}$, a packing of $[0,1]$. that is a family of disjoint intervals $B_i$ with radius $r$ and centers in $[0,1]$.
	
	\medskip			
	\noindent
	{\bf Case 1:  $q\le 0$.} Set $B_i=:B(x_i,r)$. There exists a unique $v(x_i,r)\in \Sigma_{\omega,n}$ such that $x_i\in I_{\omega}^{v(x_i,r)}\subset B_i$ and $I_{\omega}^{(v(x_i,r))^\star} \not\subset B_i$.
	Here the notation $\star$ means that we delete the last character of the word.
	Then
	\begin{eqnarray*}
		2r &\geq& |I_{\omega}^{v(x_i,r)}|=\widetilde{\mu}_{\omega}([v(x_i,r)]_{\omega})\\
		&\geq& \exp(S_{n}\Phi(\omega,\vl)-n\epsilon(\Phi,\omega,n))) \geq \exp(-2nC_{\Phi}),
	\end{eqnarray*}
	for $r$ small enough. On the other hand, since $I_{\omega}^{(v(x_i,r))^\star} \not\subset B_i$, we have $$r\leq |I_{\omega}^{(v(x_i,r))^\star}|\leq \exp(S_{n-1}\Phi(\omega,\vl)+(n-1)\epsilon(\Phi,\omega,n-1)))\leq \exp(-\frac{(n-1)c_{\Phi}}{2})$$ (recall that $c_\phi=c_\Phi$ was defined in \eqref{cphi}. 
	Consequently, $\frac{\log 2r}{-2C_{\Phi}}\leq n\leq \frac{2\log r}{-c_{\Phi}}+1$.
	
	Also, $\nu_{\omega}(B_i)\geq\nu_{\omega}(I_{\omega}^{v(x_i,r)})\geq |X^{v(x_i,r)}_{\omega}|$. Since $q<0$, we get (using proposition~\ref{for n} applied to $q\Psi-\mathcal{T}(q)\Phi$ and its associated random weak Gibbs measure $\widetilde{\mu}^{q\Psi-\mathcal{T}(q)\Phi}_{\omega}$). Noticing $\mathcal{T}(q)$ for $q\leq 0$ and $\lim_{n\to\infty}\|\Phi(\sigma^{n-1}\omega))\|_{\infty}/n=0$, i.e. $\|\Phi(\sigma^{n-1}\omega))\|_{\infty}=o(n)$, we get
	\begin{eqnarray*}
		\vw(B_i)^q \leq |X^{v(x_i,r)}_{\omega}|^q
		&\leq& \exp(q(S_n\Psi(\omega,\vl)-n\epsilon(\Psi,\omega,n))))\quad(\forall \vl\in [v(x_i,r)]_{\omega} )\\
		&=&\exp((S_n(q\Psi-\mathcal{T}(q)\Phi)(\omega,\vl)+\mathcal{T}(q)S_{n}\Phi(\omega,\vl)-qn\epsilon(\Psi,\omega,n))\\
		&\leq&((S_n(q\Psi-\mathcal{T}(q)\Phi)(\omega,\vl)+\mathcal{T}(q)S_{n-1}\Phi(\omega,\vl)-qn\epsilon(\Psi,\omega,n)+o(n))\\
		&\leq& \widetilde{\mu}^{q\Psi-\mathcal{T}(q)\Phi}_{\omega}([v(x_i,r)]_{\omega})|I_{\omega}^{(v(x_i,r))^\star}|^{\mathcal{T}(q)}\exp(-qn\epsilon(\Psi,\omega,n)+o(n))\\
		&&\cdot\exp(n\epsilon(q\Psi-\mathcal{T}(q)\Phi,\omega,n)-\mathcal{T}(q)(n-1)\epsilon(\Phi,\omega,n-1))\\
		&\leq& \widetilde{\mu}^{q\Psi-\mathcal{T}(q)\Phi}_{\omega}([v(x_i,r)]_{\omega})r^{\mathcal{T}(q)}\exp(o(-\log r)).
	\end{eqnarray*}
	
	Thus $\sum_{B_i\in\mathcal{B}}\vw(B_i)^q\leq r^{\mathcal{T}(q)}\exp(o(-\log r))$, and
	letting $r\to 0$ yields $\tau_{\vw}(q)\geq \mathcal{T}(q)$.
	
	\medskip			
	\noindent{\bf Case 2:   $q\in (0,t_0)\subset (0,1)$.} Recall that $t_0=\dim_{H} X_{\omega}$ is the unique real number such that $P(t\Psi)=0$.
	Define
	\begin{align*}V(\omega,n,r)&=\{v\in\Sigma_{\omega,n}:|I_{\omega}^v|\geq 2r, \exists s \text{ such that } vs\in \Sigma_{\omega,n+1},\ |I_{\omega}^{vs}|< 2r \},\\
	V'(\omega,n,r)&=\{v\in V(\omega,n,r): \text{there is no }  k\in\mathbb{N} \text{ such that } v|_k\in V(\omega,k,r)\text{ with }k<n\},
	\end{align*}
	as well as 	$V(\omega,r)=\bigcup_{n\geq 1}V'(\omega,n,r), \ n_r=\max\{|v|:v\in V(\omega,r)\}, \text{ and }n'_r=\min\{|v|:v\in V(\omega,r)\}$.
	
	We have  $n_r=O(-\log r)=O(n'_r)$ and for any $v\in V(\omega,r)$ we have
	$$|I_{\omega}^v|\leq |I_{\omega}^{vs}|\exp(\|\Phi(\sigma^{|v|}\omega))\|_{\infty}+|v|\epsilon(\Phi,\omega,|v|)+(|v|+1)\epsilon(\Phi,\omega,|v|+1))),$$
	so that 
	\begin{equation}\label{control I-wv-r}
	|I_{\omega}^v|\leq 2r\exp(o(-\log r))
	\end{equation} 
	For $v\in V(\omega,r)$, $I_{\omega}^{v}$ meets at most $\exp(o(-\log r))$ intervals $B_i$ of the packing $\mathcal{B}$, and for every $B_i$ there are  at most two intervals $I_{\omega}^v$ and  $I_{\omega}^{v'}$ such that $B_i\subset I_{\omega}^v\cup I_{\omega}^{v'}$ and $v,v'\in V(\omega,r)$.
	Using the sub-additivity of the function $s\geq 0\mapsto s^q$, we get
	$$\nu_\omega(B_i)^q\leq \begin{cases}
	\nu_\omega({1})^{q}+\nu_\omega(I_{\omega}^v)^q+\nu_\omega(I_{\omega}^{v'})^q&\text{if }\ 1\in B_i ,\\
	\nu_\omega(I_{\omega}^v)^q+\nu_\omega(I_{\omega}^{v'})^q&\text{otherwise} \end{cases}.$$
	
	Recalling the definition of the inverse measure $\nu_\omega$ and proposition~\ref{definition nu} we know that $\nu_\omega(\widering{I}_{\omega}^v)\leq |X_{\omega}^v|$. Since  $0\leq q\leq t_0$, $\mathcal{T}(q)\leq 0$, then we get:
	\begin{eqnarray*}
		\nu_\omega(\widering{I}_{\omega}^v)^q\leq|X_{\omega}^v|^q
		&\leq& \exp(qS_{|v|}\Psi(\omega,\vl)+q{|v|} \epsilon(\Psi,\omega,{|v|}))\quad  (\forall \vl\in [v]_{\omega} )\\
		&\leq& \exp((S_{|v|}(q\Psi-\mathcal{T}(q)\Phi)(\omega,\vl)+\mathcal{T}(q)S_{{|v|}}\Phi(\omega,\vl)+q{|v|} \epsilon(\Psi,\omega,{|v|}))\\
		&\leq& \widetilde{\mu}^{q\Psi-\mathcal{T}(q)\Phi}_{\omega}([v]_{\omega})|I_{\omega}^{v}|^{\mathcal{T}(q)}\\
		&&\cdot\exp(q{|v|} \epsilon(\Psi,\omega,{|v|})+ {|v|} \epsilon(q\Psi-\mathcal{T}(q)\Phi,\omega,{|v|})-\mathcal{T}(q){|v|} \epsilon(\Phi,\omega,{|v|}))\\
		&\leq& \widetilde{\mu}^{q\Psi-\mathcal{T}(q)\Phi}_{\omega}([v]_{\omega})r^{\mathcal{T}(q)}\exp(o(-\log r)) (\text{ see }\eqref{control I-wv-r} \text{ and } {|v|}=O(-\log r)).
	\end{eqnarray*}
	It follows that
	\begin{eqnarray*}
		\sum_{B_i\in\mathcal{B}}\nu_\omega(B_i)^q &\leq & \nu_\omega({1})^{q}+\exp(o(-\log r))\Big (\sum_{n=n'_r}^{n_r}\sum_{v\in V(\omega,r)\cap\Sigma_{\omega,n}}\nu_\omega(\widering{I}_{\omega}^v)^q \\
		&& \quad\quad\quad\quad\quad\quad\quad\quad\quad\quad\quad\quad\quad\quad\quad\quad+\sum_{n=0}^{n_r}\sum_{v\in \Sigma_{\omega,n}}\sum_{s\in S'(\omega,v,1)}\nu_{\omega}(\{x_{\omega}^{vs}\})^q\Big ).
	\end{eqnarray*}
	Now, on the one hand,
	$$\sum_{n=n'_r}^{n_r}\sum_{v\in V(\omega,r)\cap\Sigma_{\omega,n}}\nu_\omega(\widering{I}_{\omega}^v)^q\leq r^{\mathcal{T}(q)}\exp(o(-\log r)),$$
	and on the other hand, 		
	for any $n\leq n_r$, with a similar way, we have that
	\begin{eqnarray*}
		&& \sum_{v\in \Sigma_{\omega,n}}\exp(qS_{n}\Psi(\omega,\vl)+o(n))(\forall \vl\in[v]_{\omega}) \\
		&\leq & \sum_{v\in \Sigma_{\omega,n}}\widetilde{\mu}^{q\Psi-\mathcal{T}(q)\Phi}_{\omega}([v]_{\omega}) r^{\mathcal{T}(q)}\exp(o(-\log r))
		\leq r^{\mathcal{T}(q)}\exp(o(-\log r)).
	\end{eqnarray*}
	Consequently,
	\begin{eqnarray*}
		&& \sum_{n=0}^{n_r}\sum_{v\in \Sigma_{\omega,n}}\sum_{s\in S'(\omega,v,1)}\nu_{\omega}(\{x_{\omega}^{vs}\})^q  \\
		&\leq& \sum_{n=0}^{n_r}\sum_{v\in \Sigma_{\omega,n}}\sum_{s\in S'(\omega,v,1)}(m_{\omega}^{vs'}-M_{\omega}^{vs})^q\\
		&\leq& \sum_{n=0}^{n_r}\sum_{v\in \Sigma_{\omega,n}}l(\sigma^n\omega)\exp(qS_{n}\Psi(\omega,\vl)+o(n))(\forall \vl\in[v]_{\omega})\\
		&\leq& r^{\mathcal{T}(q)}\exp(o(-\log r))(\text{noticing the fact that }\log n+\log l(\sigma^n\omega)=o(-\log r)).
	\end{eqnarray*}
	Now we obtain
	\begin{equation*}
	\sum_{B_i\in\mathcal{B}}\nu_{\omega}(B_i)^q\leq r^{\mathcal{T}(q)}\exp(o(-\log r)),
	\end{equation*}
	and letting $r\to 0$, we get $\tau_{\vw}(q)\geq \mathcal{T}(q)$.
	
	\medskip			
	\noindent
	{\bf Case 3: $q\geq t_0=\dim_{H} X_{\omega}$.} Since $\vw$ is discrete, we can easily get  $\tau_{\vw}(q)=0$ for every $q\geq 1$. For $q=t_0$, one has $\tau_{\vw}(q)\geq \mathcal{T}(q)=0$. Since the function $\tau_{\vw}$ is concave, we get $\tau_{\vw}(q)=0$ for every $q\geq t_0$.
\end{proof}

Next we collect 	information associated with the approximation of $(\Phi,\Psi)$ by pairs of H\"older continuous  potentials.

\section{Basic properties related to the approximation of $(\Phi,\Psi)$ by H\"older continuous random potentials}\label{section:Some preparation}
The material of this section, which can be skipped in a first reading, is borrowed from  \cite{Yuan2016} (here we just permute the roles of $\Phi$ and $\Psi$; this is natural since we work with the inverse measures of random weak Gibbs measures). Some facts will be collected in order to construct suitable measures supported on the level sets of the lower local dimension of $\nu_\omega$.

We fix two sequences $\{\Psi_i\}_{i\geq 1}$ and $\{\Phi_i\}_{i\geq 1}$ of random H\"older potentials as in \cite[section 3]{Yuan2016}, which converge to $\Phi$ and $\Psi$ respectively. Since  we assumed that $c_{\phi}=c_{\Phi}>0$, for each $i\in\N$ there exists a function $\mathcal{T}_i$ such that for any $q\in \mathbb{R}$ one has $P(q\Psi_i-\mathcal{T}_i(q)\Phi_i)=0$, and we have:

\begin{lemma}\label{converge TT}
	\begin{enumerate}
		\item  $\mathcal{T}_i$ converges poitwise to $\mathcal{T}$ as $i\to\infty$.
		\item $\mathcal{T}^*_i$ converges pointwise to $\mathcal{T}^*$ over the interior of the domain of $\mathcal{T}^*$ as $i\to\infty$.
	\end{enumerate}
\end{lemma}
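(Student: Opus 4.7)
The plan is to first establish (i) from the continuity of the random topological pressure with respect to the potential, and then to deduce (ii) from a classical fact about Legendre transforms of pointwise convergent concave functions.

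For (i), the key input is the elementary estimate
$$|P(\Upsilon_1)-P(\Upsilon_2)|\le \int_{\Omega}\|\Upsilon_1(\omega)-\Upsilon_2(\omega)\|_{\infty}\,\mathrm{d}\mathbb{P}(\omega),$$
which follows from the variational definition of $P$ and the bound $|e^a-e^b|\le|a-b|\max(e^a,e^b)$. Since by construction in \cite{Yuan2016} the sequences $\{\Phi_i\}$ and $\{\Psi_i\}$ converge to $\Phi$ and $\Psi$ in $\mathbb{L}^1_{\Sigma_{\Omega}}(\Omega,C(\Sigma))$, this estimate gives joint continuity of $(q,t)\mapsto P(q\Psi-t\Phi)$ on $\mathbb{R}^2$ together with the convergence $P(q\Psi_i-t\Phi_i)\to P(q\Psi-t\Phi)$ for every $(q,t)$. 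Now, because $c_\Phi>0$, one has $c_{\Phi_i}\ge c_\Phi/2$ for $i$ large enough, so each map $t\mapsto P(q\Psi_i-t\Phi_i)$ is strictly increasing with slope bounded below by $c_\Phi/2$; this forces both the existence and the uniqueness of its zero $\mathcal{T}_i(q)$, and simultaneously provides an $i$-uniform bound of the form $|\mathcal{T}_i(q)|\le A|q|+B$ depending only on $c_\Phi$ and on $\sup_i\int\|\Psi_i\|_{\infty}+\|\Phi_i\|_{\infty}\,\mathrm{d}\mathbb{P}$. Extracting any subsequential limit $\ell$ of $(\mathcal{T}_i(q))_{i}$ and passing to the limit in $P(q\Psi_i-\mathcal{T}_i(q)\Phi_i)=0$ via the joint continuity yields $P(q\Psi-\ell\Phi)=0$, whence $\ell=\mathcal{T}(q)$ by uniqueness. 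This proves $\mathcal{T}_i(q)\to\mathcal{T}(q)$.

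For (ii), I would appeal to the classical fact that if a sequence of finite concave functions converges pointwise on $\mathbb{R}$ to a finite concave function, then the convergence is automatically uniform on each compact subset of $\mathbb{R}$, and the Legendre transforms converge pointwise on the interior of the domain of the limiting transform. Concretely, for $d$ in the interior of $\mathrm{dom}(\mathcal{T}^*)=[\mathcal{T}'(+\infty),\mathcal{T}'(-\infty)]$, the infimum defining $\mathcal{T}^*(d)$ is attained on a compact set $K(d)\subset\mathbb{R}$; uniform convergence $\mathcal{T}_i\to\mathcal{T}$ on a neighborhood of $K(d)$ combined with the coercivity of $q\mapsto dq-\mathcal{T}_i(q)$ outside $K(d)$ (which one reads off the $i$-uniform bound $|\mathcal{T}_i(q)|\le A|q|+B$ established above and the fact that $d$ lies strictly between the one-sided slopes at infinity) gives $\mathcal{T}_i^*(d)\to \mathcal{T}^*(d)$.

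The step I expect to be the most delicate, though not a real obstacle, is the $i$-uniform control of $c_{\Phi_i}$ and of the linear growth of $\mathcal{T}_i$: it is precisely what ensures both that $\mathcal{T}_i$ is globally well-defined and that the minimizers of $q\mapsto dq-\mathcal{T}_i(q)$ stay in a compact set independent of $i$ when $d$ is in the interior of the domain of $\mathcal{T}^*$. This is guaranteed by the $\mathbb{L}^1$ convergence $\Phi_i\to\Phi$ and the positivity of $c_\Phi$, so the argument passes through.
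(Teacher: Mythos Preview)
Your argument is correct. The paper itself does not prove this lemma: it is stated in Section~6 as material ``borrowed from \cite{Yuan2016}'' (where the roles of $\Phi$ and $\Psi$ are permuted), so there is no in-paper proof to compare against. What you wrote is the standard route and matches in spirit what is done in that reference: Lipschitz continuity of the pressure in the $\mathbb{L}^1_{\Sigma_\Omega}(\Omega,C(\Sigma))$-norm, together with the uniform lower bound on the slope of $t\mapsto P(q\Psi_i-t\Phi_i)$ coming from $c_{\Phi_i}\ge c_\Phi/2$, pins down $\mathcal{T}_i(q)\to\mathcal{T}(q)$; then the stability of the Legendre transform under locally uniform convergence of concave functions gives (ii). Two minor remarks: the inequality $|e^a-e^b|\le |a-b|\max(e^a,e^b)$ is not really what yields the pressure estimate --- it is cleaner to invoke either the variational principle directly or the bound $\big|\log\sum_v\sup e^{S_n\Upsilon_1}-\log\sum_v\sup e^{S_n\Upsilon_2}\big|\le \sum_{k=0}^{n-1}\|\Upsilon_1(\sigma^k\omega)-\Upsilon_2(\sigma^k\omega)\|_\infty$ followed by the ergodic theorem; and in (ii) it is worth noting explicitly that pointwise convergence of finite concave functions on $\mathbb{R}$ automatically upgrades to locally uniform convergence, which is the fact you are using to localize the minimizers.
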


Let $\mathcal{D}$ be a dense and countable subset of $(\mathcal{T}'(+\infty),\mathcal{T}'(-\infty))$, so that for any $d\in [\mathcal{T}'(+\infty),\mathcal{T}'(-\infty)]$, there exists $\{d_k\}_{k\in \mathbb{N}}\subset \mathcal{D}^{\mathbb{N}}$ such that $\lim_{k\to \infty} d_k = d$ and $\lim_{k\to \infty} \mathcal{T}^{*}(d_k)=\mathcal{T}^*(d)$.

Let $\{\mathcal{D}_i\}_{i\in \mathbb{N}}$ be a sequence of sets such that
\begin{itemize}
	\item $\mathcal{D}_i$ is a finite set for each $i\in \mathbb{N}$,
	\item $\mathcal{D}_i\subset \mathcal{D}_{i+1},$ for each $i\in \mathbb{N}$,
	\item $\cup_{i\in \mathbb{N}}\mathcal{D}_i=\mathcal{D}$.
\end{itemize}

Let us fix a positive sequence $\{\varepsilon_{i}\}_{i\in\mathbb{N}}$ decreasing to $0$. For each $i$, there exists $j_i$ large enough such that for any $d_i\in \mathcal{D}_i$, there exists $q_i\in \mathbb{R}$ such that
\begin{enumerate}
	\item $\mathcal{T}'_{j_i}(q_i)=d_i$,
	\item $|\mathcal{T}_{j_i}^*(d_i)-\mathcal{T}^*(d_i)|\leq (\varepsilon_i)^4$.
	\item $\int_{\Omega} \var_{j_i}\Psi\ d\mathbb{P}\le (\varepsilon_i)^4$ and
	$\int_{\Omega} \var_{j_i}\Phi\ d\mathbb{P}\le (\varepsilon_i)^4$
\end{enumerate}

Define $\mathcal{Q}_i=\{q_{i}, d_i\in \mathcal{D}_i\}$ and assume that $j_{i+1}>j_i$ for each $i\in\mathbb{N}$. For any $q\in Q_i$, we define
$$
\Lambda_{i,q}:=\widetilde{\Lambda}_{j_i,q}=q\Phi_{j_i}-\mathcal{T}_{j_i}(q)\Psi_{j_i}.
$$
With $\Lambda_{i,q}$ is associated a random Gibbs measure $\{\widetilde \mu^{\Lambda_{i,q}}_\omega\}_{\omega\in\Omega}$ and $\{\mu^{\Lambda_{i,q}}_\omega\}_{\omega\in\Omega}$ (see for instance \cite{Yuan2016} for the definition).
For any $n\in\mathbb{N}$, for any $v\in\Sigma_{\omega,n}$, define 
\begin{equation}\label{def-zeta}
\zeta^{{\Lambda}_{i,{q}}}_{\omega}(I_{\omega}^v)=\mu^{\Lambda_{i,q}}_\omega(U_{\omega}^v)=\widetilde \mu^{\Lambda_{i,q}}_\omega([v]_{\omega}).
\end{equation}

\begin{fact}\label{preparation}
	{\rm For each $i\geq 1$, for any $\epsilon>0$, there exist $ C>0$ (large enough), $M_i\in\mathbb{N}$ and a measurable set $\Omega(i)\subset \Omega$  such that the following basic properties (denoted by (BP)) hold for $\omega$, i.e.
		\begin{enumerate}
			\item $\mathbb{P}(\Omega(i))>3/4$;
			\item  $M(\omega)+1\leq M_{i}$;
			\item  for all $\omega\in\Omega(i)$, there is an increasing sequence $\{n^i_{k}=n^i_{k}(\omega)\}_{n\in\mathbb{N}}$ in $\mathbb{N}^\mathbb{N}$ such that $\sigma^{n^i_{k}+M_i}\omega\in \Omega(i)$
			\item for any $\omega\in\Omega(i)$, for $\Upsilon\in\{\Phi,\Psi\}$, $\forall n\in\mathbb{N}$, $\forall \vl\in\Sigma_{\omega}$, we have \begin{equation}\label{Control S-n upper}
			|S_{n}\Upsilon(\omega,\vl)|\leq nC;
			\end{equation}
			\item for any $\varepsilon_i>0$, for all $\omega\in\Omega(i)$,  there exists an integer $\mathscr{K}_i=\mathscr{K}_i(\omega)$ such that for any $d_i\in \mathcal D_i$, there exists $q_i\in \mathcal Q_i$ with $\mathcal{T}'_{j_i}({q_i})=d_i$  and $E_{i,q_i}=E_{i,q_i}(\sigma^{M_{i}}\omega)\subset [0,1]$ such that the following hold:
			\begin{itemize}
				\item the measure   $\tilde{\mu}_{\sigma^{M_{i}}\omega}^{\Lambda_{i,q_i}}$ is  well defined, and thus  so is $\zeta^{{\Lambda}_{i,{q_i}}}_{\sigma^{M_{i}}\omega}$ with $\zeta^{{\Lambda}_{i,{q_i}}}_{\sigma^{M_{i}}\omega}(E_{i,q_i})>1/2$;
				\item  
				\begin{equation}\label{control M}
				M_{i}\leq n^i_{\mathscr{K}_i}(\varepsilon_{i})^4;
				\end{equation}
				\item for $\Upsilon\in\{\Phi,\Psi\}$, for $p\geq n^i_{\mathscr{K}_i}$, we have 
				\begin{equation}\label{control Sp-var-ji}
				\left|\frac{S_p\mathrm{var}_{j_i}\Upsilon(\sigma^{M_{i}}\omega)}{p}-\int_{\Omega} \var_{j_i}\Upsilon d\mathbb{P}\right|\leq (\varepsilon_{i})^4,
				\end{equation}
				%
				%
				and
				$\forall \vl\in\Sigma_{\omega}$, we have
				\begin{equation}\label{control Upsilon small}
				S_{p}\Upsilon(F^{M_{i}}(\omega,\vl))\leq -\frac{pc_{\Upsilon}}{2};
				\end{equation}
				\item for any $k\geq \mathscr{K}_i$, 
				\begin{equation}\label{control nk}
				\frac{n^{i}_{k}-n^{i}_{k-1}}{n^{i}_{k-1}}\leq (\varepsilon_{i})^4;
				\end{equation}
				
				\item for any  $x\in {E}_{i,{q_i}}(\sigma^{M_{i}}\omega)$, for any $k\geq \mathscr{K}_i$, for any  $v\in \Sigma_{\sigma^{M_{i}}\omega,n^i_k}$ such that $x\in I_{\omega}^v$, one has
				$|v\wedge v+|\geq n^i_{k-1}$ and $|v\wedge v-|\geq n^i_{k-1}$.	
				Furthermore, for any  $\vl\in [v]_{\sigma^{M_{i}}\omega}\cup[v+]_{\sigma^{M_{i}}\omega}\cup[v-]_{\sigma^{M_{i}}\omega}$, one has
				\begin{equation}\label{control psi/Phi}
				\left|\frac{S_{n^i_k}\Psi_{j_i}(\sigma^{M_{i}}\omega,\vl)}{S_{n^i_k}\Phi_{j_i}(\sigma^{M_{i}}\omega,\vl)}-d_i\right|\leq (\varepsilon_i)^2,
				\end{equation}
				and for  $v'\in\{v, v+,v-\}$ and $\vl'\in [v']_{\sigma^{M_{i}}}$, one has 
				\begin{equation}\label{control zeta}
				\left|\frac{\log \zeta^{{\Lambda}_{i,{q_i}}}_{\sigma^{M_{i}}\omega}(I_{\sigma^{M_{i}}\omega}^{v'})}{S_{|v'|}\Phi_{j_i}(\sigma^{M_{i}}\omega,\underline{v'})}-
				\mathcal{T}^*(d_i)\right|\leq (\varepsilon_i)^2.
				\end{equation}
			\end{itemize}
		\end{enumerate}
	}
\end{fact}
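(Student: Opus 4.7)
The plan is to establish the many items of fact~\ref{preparation} in three stages: first, choose $M_i$ and a ``good'' set of large probability on which the uniform bound $M(\omega)+1\le M_i$ and the a.s.\ estimate $|S_n\Upsilon|\le nC$ both hold; second, refine this set by intersecting with finitely many full-measure sets on which Birkhoff averages of the relevant potentials converge, to produce $\Omega(i)$ with $\mathbb P(\Omega(i))>3/4$; third, for each of the finitely many $q_i\in\mathcal Q_i$, construct the set $E_{i,q_i}$ by an Egorov-type truncation inside the corresponding random Gibbs measure.

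For the first two stages, I would use the finiteness of $M$ to choose $M_i$ with $\mathbb P(\{M(\omega)+1>M_i\})<1/16$, and Birkhoff's theorem applied to $\|\Upsilon(\cdot)\|_\infty$ (see~\eqref{int}) to produce a uniform constant $C$ for~\eqref{Control S-n upper} on a set of $\mathbb P$-probability close to $1$. For each fixed $q\in\mathcal Q_i$, Birkhoff's theorem applied to $\mathrm{var}_{j_i}\Phi,\ \mathrm{var}_{j_i}\Psi,\ \Phi_{j_i},\ \Psi_{j_i}$ yields convergence of the $n$-averages to their integrals on a full-measure set (hence~\eqref{control Sp-var-ji} and~\eqref{control Upsilon small}); intersecting these finitely many sets with the set obtained in stage one keeps the measure above $3/4$. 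Item~(iii) then follows from ergodicity of $\sigma$: for $\mathbb P$-a.e.\ $\omega\in\Omega(i)$, the return set $\{n:\sigma^{n+M_i}\omega\in\Omega(i)\}$ has positive lower density, so its increasing enumeration $\{n^i_k\}$ satisfies $n^i_k/k\to 1/\mathbb P(\Omega(i))$, which gives~\eqref{control nk} once $k$ is large enough.

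The crux is item~(v). For each $q_i\in\mathcal Q_i$, the random Gibbs measure $\widetilde\mu^{\Lambda_{i,q_i}}$ is carried, by the multifractal analysis of random weak Gibbs measures developed in~\cite{Yuan2016}, on points whose Birkhoff ratios $S_n\Psi_{j_i}/S_n\Phi_{j_i}$ converge to $\mathcal T'_{j_i}(q_i)=d_i$ and for which the local scaling $\log\zeta^{\Lambda_{i,q_i}}_{\sigma^{M_i}\omega}(I^v_{\sigma^{M_i}\omega})/S_{|v|}\Phi_{j_i}$ converges to $\mathcal T^*_{j_i}(d_i)$; since by construction $|\mathcal T^*_{j_i}(d_i)-\mathcal T^*(d_i)|\le (\varepsilon_i)^4$, Egorov's theorem on the shifted fiber $\sigma^{M_i}\omega$ extracts a set $E_{i,q_i}$ of $\zeta^{\Lambda_{i,q_i}}_{\sigma^{M_i}\omega}$-mass at least $1/2$ on which~\eqref{control psi/Phi} and~\eqref{control zeta} hold uniformly for every $n\ge n^i_{\mathscr K_i}$. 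The ``deep cylinder'' conditions $|v\wedge v+|\ge n^i_{k-1}$ and $|v\wedge v-|\ge n^i_{k-1}$ on $E_{i,q_i}$ are then enforced by removing the points lying too close to the boundary of their level-$n^i_{k-1}$ cylinder; since $\widetilde\mu_{\sigma^{M_i}\omega}$ is atomless, this shrinks the mass by at most a negligible factor which can be absorbed into the threshold $1/2$.

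The main obstacle is simultaneity: the estimates in~(v) must hold uniformly in $q_i\in\mathcal Q_i$ and, for each such $q_i$, for \emph{every} $k\ge\mathscr K_i$ of the return sequence. Since the rates in Birkhoff's theorem are not quantitative (the only assumption on $(\Omega,\sigma,\mathbb P)$ is ergodicity), one is forced to take $\mathscr K_i=\mathscr K_i(\omega)$ as a random variable, chosen $\omega$-wise large enough to absorb the convergence of the finitely many averages attached to $\mathcal Q_i$ and to $\Upsilon\in\{\Phi,\Psi\}$; the bound~\eqref{control M}, $M_i\le n^i_{\mathscr K_i}(\varepsilon_i)^4$, is then satisfied by further increasing $\mathscr K_i(\omega)$ if needed, using $n^i_k\to\infty$ for $\mathbb P$-a.e.\ $\omega$.
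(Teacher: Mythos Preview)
The paper does not actually supply a proof of this fact: at the beginning of Section~\ref{section:Some preparation} it says explicitly that ``the material of this section \ldots\ is borrowed from~\cite{Yuan2016} (here we just permute the roles of $\Phi$ and $\Psi$)'', and fact~\ref{preparation} is simply stated as a collection of properties imported from that reference. So there is nothing to compare your argument to line by line; what one can say is that your three-stage outline (choose $M_i$ and $C$ on a large set via integrability of $\|\Upsilon(\cdot)\|_\infty$; refine via the finitely many Birkhoff limits attached to $\mathcal Q_i$; then Egorov-truncate inside each $\widetilde\mu^{\Lambda_{i,q_i}}$ to get $E_{i,q_i}$) is exactly the mechanism by which such facts are obtained in~\cite{Yuan2016}, and the paper would accept this as a correct reconstruction.

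One point deserves tightening. Your justification of the ``deep cylinder'' conditions $|v\wedge v\pm|\ge n^i_{k-1}$ by appealing to atomlessness of $\widetilde\mu_{\sigma^{M_i}\omega}$ is not sufficient. Atomlessness alone does not make the union over all $k\ge\mathscr K_i$ of the boundary layers have small $\zeta^{\Lambda_{i,q_i}}$-mass; what is actually used is the Gibbs scaling of $\zeta^{\Lambda_{i,q_i}}$ together with~\eqref{control nk}, which forces $n^i_k-n^i_{k-1}=o(n^i_{k-1})$ and hence the two extreme level-$n^i_k$ subintervals of each level-$n^i_{k-1}$ interval to carry a geometrically decaying fraction of the mass, summable in $k$. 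Replace ``atomless'' by this quantitative argument and your sketch is complete.
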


\begin{fact}\label{modify2}
	{\rm
		We can change ${\Omega}(i)$ to ${\Omega}_i\subset {\Omega}(i)$ a bit smaller such that $\mathbb{P}(\Omega_i)\geq 1/2$ and	
		there exist two integers $\kappa_i$ and $W(i)$ such that for any $\omega\in {\Omega}_i$,
		$\mathscr{K}_i(\omega)\leq \kappa_i$, $n^{i}_{\kappa_i}(\omega)\leq W(i)$,
		and the properties listed in fact \ref{preparation} still hold.
	}	
\end{fact}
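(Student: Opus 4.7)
The plan is to shrink $\Omega(i)$ by imposing uniform upper bounds on both $\mathscr{K}_i(\omega)$ and $n^i_{\kappa_i}(\omega)$. Since Fact~\ref{preparation} constructs $\mathscr{K}_i$ and the sequence $\{n^i_k\}_k$ as $\mathbb{N}$-valued measurable functions on $\Omega(i)$ that are finite for every $\omega$, this is a direct application of the continuity of probability together with the measurability of these integer-valued objects. Nothing deeper is required; the issue is purely to uniformize in $\omega$ two integer parameters that a priori depend on $\omega$.

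Concretely, for each integer $\kappa\geq 1$ the set $C_\kappa:=\{\omega\in\Omega(i):\mathscr{K}_i(\omega)\leq\kappa\}$ is measurable, nested in $\kappa$, and $\bigcup_\kappa C_\kappa=\Omega(i)$. Since $\mathbb{P}(\Omega(i))>3/4$, one can choose $\kappa_i$ such that $\mathbb{P}(C_{\kappa_i})\geq 5/8$. Next, for $\omega\in C_{\kappa_i}$ the quantity $n^i_{\kappa_i}(\omega)$ is a finite measurable function, so the nested sets $B_W:=\{\omega\in C_{\kappa_i}:n^i_{\kappa_i}(\omega)\leq W\}$ exhaust $C_{\kappa_i}$ as $W\to\infty$, and one can pick $W(i)$ with $\mathbb{P}(B_{W(i)})\geq 1/2$. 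Set $\Omega_i:=B_{W(i)}$. By construction, $\mathbb{P}(\Omega_i)\geq 1/2$ and $\mathscr{K}_i(\omega)\leq\kappa_i$, $n^i_{\kappa_i}(\omega)\leq W(i)$ for every $\omega\in\Omega_i$.

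It remains to verify that the properties listed in Fact~\ref{preparation} persist on $\Omega_i$. Items (i)--(iv), together with the recurrence in (iii) (which requires $\sigma^{n^i_k+M_i}\omega\in\Omega(i)$, \emph{not} $\Omega_i$), are inherited trivially from the inclusion $\Omega_i\subset\Omega(i)$. The estimates gathered in (v) are all phrased as \emph{universal} statements "for any $k\geq\mathscr{K}_i$" or "for $p\geq n^i_{\mathscr{K}_i}$"; since on $\Omega_i$ one has $\kappa_i\geq\mathscr{K}_i(\omega)$ and the sequence $n^i_k$ is increasing in $k$, all these estimates remain valid \emph{a fortiori} when one uses the uniform thresholds $\kappa_i$ and $n^i_{\kappa_i}(\omega)\leq W(i)$ in place of $\mathscr{K}_i(\omega)$ and $n^i_{\mathscr{K}_i(\omega)}(\omega)$. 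The bound \eqref{control M} is also preserved since $n^i_{\kappa_i}(\omega)\geq n^i_{\mathscr{K}_i(\omega)}(\omega)$.

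There is no serious obstacle: the argument is essentially measure-theoretic. The only mild point to be checked is the monotonicity of the conditions in Fact~\ref{preparation}(v) under increasing the threshold, which is immediate from their tail-type formulation. The conclusion of Fact~\ref{modify2} then follows at once by taking $\kappa_i$ and $W(i)$ as above.
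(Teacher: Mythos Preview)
Your argument is correct and is exactly the natural measure-theoretic reduction one expects here. The paper does not give its own proof of Fact~\ref{modify2}: it is stated as a fact borrowed from \cite{Yuan2016}, so there is nothing to compare against beyond noting that your two-step exhaustion (first uniformize $\mathscr{K}_i$, then uniformize $n^i_{\kappa_i}$) is the standard way to pass from pointwise-finite measurable data to uniform bounds on a set of prescribed positive measure, and your check that the tail-type estimates in Fact~\ref{preparation}(v) are monotone in the threshold is the only point requiring a moment's thought.
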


Then, we denote by ${\theta}(i,\omega,s)$ the $s$-th return time of the point $\omega$ to the set $ \Omega_{i}$, that is 
$${\theta}(i,\omega,1)=:\min\{n\in\mathbb{N}:\ \sigma^n\omega\in\Omega_i\},$$
$$\forall\ s\in\mathbb{N}, s>1,\ {\theta}(i,\omega,s)=:\min\{n\in\mathbb{N}:\ n>{\theta}(i,\omega,s-1) \text{ and } \sigma^n\omega\in\Omega_i\} $$

Since $\mathbb{N}$ is countable, there exists $\Omega'\subset\Omega$ of full $\mathbb P$-probability such that  for all $\omega\in\Omega'$,
$$\lim_{s\to\infty}\frac{{\theta}(i,\omega,s)}{s}=\frac{1}{\mathbb{P}(\Omega_i)}\text{ hence }
\lim_{s\to\infty}\frac{{\theta}(i,\omega,s)-{\theta}(i,\omega,s-1)}{ \theta(i,\omega,s-1)}=0.$$

By construction, for any $w\in \Sigma_{\omega,n}$ such that $\sigma^n \omega\in {\Omega}_i$, for any $d_i\in \mathcal{D}_i$, we have considered $q_i\in \mathcal{Q}_i$ such that $\mathcal{T}_{j_i}'(q_i)=d_i$. Remember the operation $\ast$ (see section~\ref{section:Setting and main results}),  which for $w\in \Sigma_{\omega,n}$ and $v\in\Sigma_{\sigma^{n+p}\omega,m}$ with $p\geq M(\sigma^{n}\omega)+1$, defines the word $w\ast v$.  Due to \eqref{def-zeta}, we can define a new measure $\zeta_{\omega,w,q_i}$ on  $ {I_{\omega}^{w}}$,  by setting, for  $v\in\Sigma_{\sigma^{n+M_{i}}\omega,*}$:
\begin{equation*}\label{}
\zeta_{\omega,w,q_i}(I_{\omega}^{w\ast v})=\zeta^{{\Lambda}_{i,{q_i}}}_{\sigma^{M_i}\omega}(I_{\sigma^{n+M_i}\omega}^{v})=\widetilde{\mu}_{\sigma^{n+M_i}\omega}^{{\Lambda}_{i,q_i}}([v]_{\sigma^{n+M_i}\omega}).
\end{equation*}
For any $y'\in I_{\sigma^{n+M_{i}}\omega}^v$, there exists $x'\in X_{\sigma^{n+M_{i}}\omega}^v$ such that $F_{\mu^{\phi}_{\sigma^{n+M_{i}}\omega}}(x')=y'$. Let $x=g_{\omega}^{w\ast}(x')$ and $y(\omega,w,y')=F_{\mu^{\phi}_{\omega}}(x)$. Also, define
$$
E(\omega,i,w,q_i)=\{y(\omega,w,y'): y'\in  E_{i,q_i}(\sigma^{n+{M_{i}}}\omega)\}.
$$
Then $E(\omega,i,w,q_i)\subset I_{\omega}^{w}$ and $\zeta_{\omega,w,q_i}(E(\omega,i,w,q_i))>1/2$.
\medskip

We will also use the following additional basic fact about the decay of $|I_{\omega}^v|$, which follows from proposition~\ref{for n}$(ii)$:  for all $\omega\in\Omega'$, for $p$ large enough (depending on $\omega$ only), for any $v\in\Sigma_{\omega,p}$, one has
\begin{equation}\label{control I-p}
\exp(-C p )\leq|I_{\omega}^v|\leq \exp(-\frac{c_{\Phi}p}{2}),
\end{equation}
with $C\geq 2C_{\Phi}>0$ (recall that $C_{\Phi}$ and $c_{\Phi}=c_{\phi}$ are defined in  \eqref{int} and  \eqref{cphi} respectively).

\section{A first lower bound for the lower Hausdorff spectrum}

In this section we establish the following lower bound for the lower Hausdorff spectrum.
\begin{proposition}\label{lower bound-1}
	For any $d\in [\mathcal{T}'(+\infty),\mathcal{T}'(-\infty)]$,
	$$\dim_H(\El(\nu_{\omega},d))\geq \dim_H(E(\nu_{\omega},d))\geq \mathcal{T}^*(d),$$
\end{proposition}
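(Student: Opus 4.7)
The plan is to construct, for each $d$ in the given interval, a non-atomic probability measure $\eta_\omega$ satisfying $\eta_\omega(E(\nu_\omega,d))>0$ and $\underline{\dim}_{\mathrm{loc}}(\eta_\omega,x)\geq \mathcal T^*(d)$ at $\eta_\omega$-a.e.\ $x$; the mass distribution principle then yields $\dim_H E(\nu_\omega,d)\geq \mathcal T^*(d)$, and \emph{a fortiori} the same bound for $\underline E(\nu_\omega,d)$.

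First I would reduce to $d\in\mathcal D$, and then to $d=d_i\in\mathcal D_i$, by continuity of $\mathcal T^*$ on the interior of its domain and a sequence $d_i\to d$ with $\mathcal T^*(d_i)\to\mathcal T^*(d)$. Fact \ref{preparation} associates to each $d_i$ a parameter $q_i\in\mathcal Q_i$, a threshold $n^i_{\mathscr{K}_i}$, and a random Gibbs measure $\zeta^{\Lambda_{i,q_i}}_{\sigma^{M_i}\omega}$ whose cylinders at all scales $n^i_k\geq n^i_{\mathscr{K}_i}$ satisfy both the Gibbs-exponent estimate \eqref{control zeta} (so that the masses are $\approx \exp(\mathcal T^*(d_i)\cdot S_{n^i_k}\Phi_{j_i})$) and the Birkhoff-ratio estimate \eqref{control psi/Phi}. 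Thanks to \eqref{control Sp-var-ji} these transfer from the H\"older approximants $(\Phi_{j_i},\Psi_{j_i})$ to $(\Phi,\Psi)$ themselves up to $O(\varepsilon_i)$, and via proposition \ref{for n} they yield corresponding estimates for $|I_\omega^v|$.

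Next I would construct $\eta_\omega$ as an inhomogeneous Cantor-like measure by concatenation: having already built $\eta_\omega$ on a family of cylinders $\{I^w_\omega\}$ of depth $N_k$, I distribute the mass on each $I^w_\omega$ using the auxiliary measure $\zeta_{\omega,w,q_{i_k}}$ defined at the end of section~\ref{section:Some preparation} over a block of descendants of length roughly $n^{i_k}_{\kappa_{i_k}}$, producing a new family at depth $N_{k+1}=N_k+M_{i_k}+n^{i_k}_{\kappa_{i_k}}$. By choosing $i_k\uparrow\infty$ sufficiently slowly relative to the block lengths, I can ensure that at $\eta_\omega$-typical $x$ the running ratios $\widetilde\Psi(\omega,x|_n)/\widetilde\Phi(\omega,x|_n)$ converge to $d$ while $\log \eta_\omega(I^{x|_n}_\omega)\approx \mathcal T^*(d)\cdot \widetilde\Phi(\omega,x|_n)$, both with an $o(n)$ error. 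This mirrors the architecture of \cite{Yuan2016} for the direct measure $\mu_\omega$, transferred to the inverse setting via the identity $|I_\omega^v|=\widetilde\mu_\omega([v]_\omega)$.

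With such $\eta_\omega$ in hand, three items remain to verify. First, $\alpha_\omega^n(x)\to d$ at $\eta_\omega$-a.e.\ $x$, which combined with remark \ref{limsupalpha} gives $\overline{\dim}_{\mathrm{loc}}(\nu_\omega,x)\leq d$. Second, the Frostman-type bound $\underline{\dim}_{\mathrm{loc}}(\eta_\omega,x)\geq \mathcal T^*(d)$ follows from the cylinder mass control together with a sandwiching argument extending cylinder radii to arbitrary radii, using the geometric regularity \eqref{control nk}. Third, to invoke proposition \ref{Compare}(ii) and conclude $\underline{\dim}_{\mathrm{loc}}(\nu_\omega,x)\geq d$, one needs $\xi_\omega^x=1$ at $\eta_\omega$-a.e.\ $x$; this follows because any atom $x_\omega^{vs}$ carrying $\nu_\omega$-mass comparable to $r$ sits at distance $\gtrsim r$ from $\eta_\omega$-typical points, since the Gibbs regularity of $\zeta^{\Lambda_{i,q_i}}$ prevents $\eta_\omega$ from concentrating on any single descendant. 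The main obstacle is orchestrating the rate $i_k\to\infty$ against the block lengths so that all three properties hold at \emph{every} scale and not only along the subsequence of block endpoints; balancing this carefully against \eqref{control M}, \eqref{control nk} and \eqref{control Upsilon small} is the technical heart of the argument.
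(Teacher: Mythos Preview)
Your approach is correct in spirit but takes a genuinely different route from the paper. The paper's proof is only a few lines: it observes that the techniques of \cite{Yuan2016} give the multifractal formalism for $\mu_\omega$ with respect to the \emph{interval} local dimension $\lim_{I\ni x,|I|\to 0}\frac{\log \mu_\omega(I)}{\log|I|}$, then invokes the general inversion theory of Riedi--Mandelbrot \cite[theorem~21]{RM1998} to obtain
\[
\dim_H\Big\{x:\lim_{I\ni x,|I|\to 0}\tfrac{\log\nu_\omega(I)}{\log|I|}=d\Big\}=\mathcal T^*(d),
\]
and concludes via the inclusion of this set in $E(\nu_\omega,d)$. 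The paper explicitly remarks that a self-contained proof using only the material of Section~\ref{section:Some preparation} is possible but ``rather long and tedious'', and omits it.

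What you outline is precisely that self-contained construction. Your architecture---concatenating blocks governed by the auxiliary measures $\zeta_{\omega,w,q_{i_k}}$ with $i_k\to\infty$ slowly---is essentially the $\xi=1$ specialisation of the Cantor construction the paper carries out later in the proof of Theorem~\ref{ubiqu}. The advantage of the paper's route is brevity and the leverage of existing literature; the advantage of yours is that it is self-contained and makes transparent \emph{why} the typical point of $\eta_\omega$ lands in $E(\nu_\omega,d)$ rather than merely in $\El(\nu_\omega,d)$. One point to tighten: your argument that $\xi_\omega^x=1$ at $\eta_\omega$-a.e.\ $x$ is the genuinely delicate step. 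Gibbs regularity of $\zeta^{\Lambda_{i,q_i}}$ alone does not immediately prevent $x$ from sitting too close to a cylinder boundary; you need the neighbour-control in Fact~\ref{preparation}(v) (the condition $|v\wedge v\pm|\ge n^i_{k-1}$) to guarantee that adjacent cylinders at scale $n^i_k$ have comparable lengths, which is what forces $|x-x_\omega^{x|_ns}|\asymp \ell_\omega^{x|_n}$ along the full sequence of scales.
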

This bound will prove to be sharp for $d\in [\mathcal T'(t_0^-),\mathcal{T}'(-\infty)]$.
\begin{proof} The approach used in
	\cite{Yuan2016} also yields the validity of the multifractal formalism for $\mu_\omega$ if one considers the sets of the form $\{x\in X_{\omega}: \lim_{x\ni|I|\to 0}\frac{\log(\mu_{\omega}(I))}{\log(|I|)}=1/d\}$, like in \cite{RM1998}.
	Then using the general theory of \cite[theorem 21]{RM1998}, one gets 
	$$\dim_H\{x\in [0,1]: \lim_{x\ni|I|\to 0}\frac{\log(\nu_{\omega}(I))}{\log(|I|)}=d\}=\mathcal{T}^*(d).$$
	Since $\El(\nu_{\omega},d)\supset E(\nu_{\omega},d)\supset \{x\in [0,1]: \lim_{x\ni|I|\to 0}\frac{\log(\nu_{\omega}(I))}{\log(|I|)}=d\}$, 
	$$\dim_H(\El(\nu_{\omega},d))\geq \dim_H(E(\nu_{\omega},d))\geq \mathcal{T}^*(d).$$
\end{proof}
\begin{remark}
	It is possible to give a self-contained proof of the proposition based only on the basic properties of random weak Gibbs measures listed in the previous section. However, this is rather long and tedious, so we omit it. 
\end{remark}	

\section[Conditioned ubiquity]{Conditioned ubiquity}

In this section we extend to our context the conditioned ubiquity result obtained in~\cite{BS2007}. The statement takes the same form, but as explained in remark~\ref{rem1}(2), the proof must be revisited in order to cover the more general situation we face. The result will be applied to get the sharp lower bound for the lower Hausdorff spectrum on $ [0,\mathcal{T}'(t_0-)]$ (proposition~\ref{lower bound-2} in next section).

Recall remark~\ref{rem3}. We are interested in  the ubiquity of the family of points $\{z^v_\omega\}_{v\in\Sigma_{\omega,*}}$ relatively to the radii $\{\ell_{\omega}\}_{v\in\Sigma_{\omega,*}}$, and conditionally on the behavior of $\frac{S_{|v|}\Psi(\omega,\vl)}{S_{|v|}\Phi(\omega,\vl)}$ in~$[v]_\omega$.

\begin{definition} If $d\ge 0$, $\xi\geq 1$ and $\widetilde{\epsilon}=\{\epsilon_i\}_{i\in \N}$ is a positive sequence decreasing to~$0$ as $i\to \infty$, we set
	$$S(\omega,d,\xi,\widetilde{\epsilon}):=\bigcap_{N\geq 1}\bigcup_{n\geq N}\bigcup_{v\in\Sigma_{\omega,n},\exists \vl\in [v]_{\omega}\text{ such that }|\frac{S_{n}\Psi(\omega,\vl)}{S_{n}\Phi(\omega,\vl)}-d|\leq \epsilon_i }B(z_{\omega}^v,(\ell_{\omega}^v)^{\xi}).$$	
\end{definition}


\begin{theorem}\label{ubiqu}
	For $\mathbb{P}$-a.e. $\omega\in\Omega$, for any $\xi\geq 1$ and any exponent $d\in [\mathcal{T}'(+\infty),\mathcal{T}'(-\infty)]$, there exists a sequence $\widetilde{\epsilon}(\omega)=\{\epsilon_i(\omega)\}_{i\in \N}$ decreasing to  $0$ as $i\to \infty$, as well as a set $K^d(\xi)\subset S(\omega,d,\xi,\widetilde{\epsilon})$ and a Borel probability measure $m_{\xi}^{d}$ supported on $K^d(\xi)$  such that
	$\dim_H (m_{\xi}^d)\geq \frac{\mathcal{T}^*(d)}{\xi}$.
\end{theorem}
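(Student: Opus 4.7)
The plan is to construct $K^d(\xi)$ as a nested Cantor-like tree and $m_\xi^d$ as a Gibbs-weighted mass redistribution along this tree, adapting the deterministic conditioned ubiquity scheme of~\cite{BS2007} to our random setting. The adaptation rests on using, at the $k$-th generation of the construction, the H\"older-regularized potential $\Lambda_{i_k,q_{i_k}}=q_{i_k}\Phi_{j_{i_k}}-\mathcal T_{j_{i_k}}(q_{i_k})\Psi_{j_{i_k}}$ with its associated random Gibbs measure $\zeta^{\Lambda_{i_k,q_{i_k}}}_\omega$, rather than a fixed Gibbs measure. The three inputs are: (i) an approximating sequence $d_{i_k}\in\mathcal D$ with $d_{i_k}\to d$ and $\mathcal T^*(d_{i_k})\to\mathcal T^*(d)$ (Lemma~\ref{converge TT} and density of $\mathcal D$), together with $q_{i_k}$ satisfying $\mathcal T'_{j_{i_k}}(q_{i_k})=d_{i_k}$; (ii) the basic properties (BP) of Facts~\ref{preparation} and~\ref{modify2}, which on the sets $\Omega_{i_k}$ of $\mathbb P$-measure $\geq 1/2$ give the target ratio control~\eqref{control psi/Phi} and the Gibbs mass--diameter relation~\eqref{control zeta}; (iii) the ubiquity centers $z_\omega^v$ of Remark~\ref{rem3}.

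For $\mathbb P$-a.e.\ $\omega$, I pick a strictly increasing sequence $(n_k)_{k\geq 1}$ of return times of $\omega$ to the sets $\Omega_{i_k}$ via $\theta(i_k,\omega,\cdot)$, growing fast enough that $n_{k+1}\gg n_k$ and that the overheads $M_{i_k}$, $(\varepsilon_{i_k})^4$, $o(n^{i_k}_{\kappa_{i_k}})$ coming from (BP) are negligible relative to $n^{i_k}_{\kappa_{i_k}}$. Starting from a cylinder $I_\omega^{v_0}$ with $\sigma^{|v_0|+M_{i_1}}\omega\in\Omega_{i_1}$, I inductively define the generation-$k$ family $\mathcal G_k$ to consist of the words $v_k=v_{k-1}\ast u$, with $v_{k-1}\in\mathcal G_{k-1}$ and $u\in\Sigma_{\sigma^{|v_{k-1}|+M_{i_k}}\omega,\,n^{i_k}_{\kappa_{i_k}}}$, such that $I_\omega^{v_k}$ meets the good set $E(\omega,i_k,v_{k-1},q_{i_k})$. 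Combining~\eqref{control psi/Phi} with $\int_\Omega\var_{j_{i_k}}\Upsilon\,d\mathbb P\leq(\varepsilon_{i_k})^4$ for $\Upsilon\in\{\Phi,\Psi\}$ gives $|S_{|v_k|}\Psi(\omega,\underline v)/S_{|v_k|}\Phi(\omega,\underline v)-d|\leq\epsilon_k(\omega)$ with $\epsilon_k(\omega)\to 0$ for any $\underline v\in[v_k]_\omega$, so that the ball $B(z_\omega^{v_k},(\ell_\omega^{v_k})^\xi)$ belongs to the $k$-th layer of $S(\omega,d,\xi,\widetilde\epsilon)$; taking $n_{k+1}\gg n_k$ together with $\xi\geq 1$ and~\eqref{control I-p} ensures these balls lie strictly inside $I_\omega^{v_{k-1}}$ and are pairwise disjoint. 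I then define $m_\xi^d$ recursively by redistributing the mass of each parent ball among its children proportionally to $\zeta^{\Lambda_{i_k,q_{i_k}}}_\omega(I_\omega^{v_k})$, normalized by $\zeta_{\omega,v_{k-1},q_{i_k}}(E(\omega,i_k,v_{k-1},q_{i_k}))\geq 1/2$, and set $K^d(\xi):=\bigcap_k\bigcup_{v_k\in\mathcal G_k}B(z_\omega^{v_k},(\ell_\omega^{v_k})^\xi)\subset S(\omega,d,\xi,\widetilde\epsilon)$.

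The dimension lower bound $\dim_H(m_\xi^d)\geq\mathcal T^*(d)/\xi$ then follows from a mass-distribution argument. Telescoping the recursive definition of $m_\xi^d$ and applying~\eqref{control zeta} at each step yields, for $r=(\ell_\omega^{v_k})^\xi$,
\[
m_\xi^d\bigl(B(z_\omega^{v_k},(\ell_\omega^{v_k})^\xi)\bigr)\leq |I_\omega^{v_k}|^{\mathcal T^*(d)-o(1)}=r^{(\mathcal T^*(d)-o(1))/\xi},
\]
which is the desired scaling at generation radii. The main obstacle is to upgrade this to arbitrary radii $r\in((\ell_\omega^{v_{k+1}})^\xi,(\ell_\omega^{v_k})^\xi)$ lying strictly between two consecutive generation scales; this is the standard technical heart of any ubiquity argument and requires a separation/counting estimate bounding both the number of generation-$(k+1)$ children of $B(z_\omega^{v_k},(\ell_\omega^{v_k})^\xi)$ that $B(x,r)$ meets and the $\zeta^{\Lambda_{i_{k+1},q_{i_{k+1}}}}_\omega$-mass of each. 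In the deterministic case~\cite{BS2007} this counting rests on a translation-invariant Gibbs structure; here, as emphasized in Remark~\ref{rem1}(2), three extra sources of fluctuation must be absorbed: the drift $M_{i_k}$ between $|v_k|$ and the actual Birkhoff-controlled scale, the stochastic link between $n_k$ and $|I_\omega^{v_k}|$ via~\eqref{control Sp-var-ji}--\eqref{control Upsilon small}, and the use of H\"older surrogates $(\Phi_{j_{i_k}},\Psi_{j_{i_k}})$ in place of $(\Phi,\Psi)$. Choosing $(n_k)$ to grow sufficiently rapidly compared to $1/\varepsilon_{i_k}$ and to the Birkhoff deviation rates of~\eqref{control Sp-var-ji}, in the spirit of the auxiliary-measure constructions of~\cite{Yuan2016}, makes all three fluctuations subexponential in $n_k$, hence negligible against the leading exponent $\mathcal T^*(d)$, which closes the argument.
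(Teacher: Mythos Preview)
Your overall strategy---building a Cantor tree with Gibbs-weighted mass redistribution using the H\"older surrogates $\Lambda_{i_k,q_{i_k}}$---matches the paper's, but there is a genuine structural gap in how you nest the generations. You place the children $v_k=v_{k-1}\ast u$ so that $I_\omega^{v_k}\subset I_\omega^{v_{k-1}}$ and note that the small balls $B(z_\omega^{v_k},(\ell_\omega^{v_k})^\xi)$ lie inside $I_\omega^{v_{k-1}}$. But they do \emph{not} lie inside $B(z_\omega^{v_{k-1}},(\ell_\omega^{v_{k-1}})^\xi)$, since $z_\omega^{v_k}$ can be anywhere in $I_\omega^{v_{k-1}}$ while that parent ball has radius $(\ell_\omega^{v_{k-1}})^\xi\ll|I_\omega^{v_{k-1}}|$ for $\xi>1$. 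Consequently your set $K^d(\xi)=\bigcap_k\bigcup_{v_k}B(z_\omega^{v_k},(\ell_\omega^{v_k})^\xi)$ need not be nested, while the measure $m_\xi^d$---which you actually construct on the nested intervals $I_\omega^{v_k}$---is supported on $\bigcap_k\bigcup_{v_k}I_\omega^{v_k}$. A generic point of this latter Cantor set has no reason to lie in the small ball $B(z_\omega^{v_k},(\ell_\omega^{v_k})^\xi)$ at generation~$k$, so the inclusion $K^d(\xi)\subset S(\omega,d,\xi,\widetilde\epsilon)$ fails as written.

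The paper repairs this with an extra step you omit: after locating a good cylinder $w\ast v(k,l)$ with the correct Birkhoff ratio, one zooms \emph{into} the contracted ball $B(z_\omega^{w\ast v(k,l)},(\ell_\omega^{w\ast v(k,l)})^\xi)$ and selects there a sub-cylinder $J_l=\overline{I_\omega^{v}}$ with $\sigma^{|v|}\omega\in\Omega_{i+2}$ (via the return times $\theta(i+2,\omega,\cdot)$), arranged so that $|J_l|\le|\widehat{J_l}|^\xi\le|J_l|^{1-(\varepsilon_{i+1})^2}$. The next generation is then built inside $J_l$, not inside $I_\omega^{w\ast v(k,l)}$. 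This two-scale alternation (cylinder $\to$ contracted ball $\to$ sub-cylinder) is exactly what forces the nested Cantor set $\bigcap_i\bigcup_{J\in G(d_1,\ldots,d_i)}J$ to sit inside $S(\omega,d,\xi,\widetilde\epsilon)$. Separately, your treatment of intermediate radii is too schematic: the paper isolates a dedicated lemma giving $\zeta_{\omega,w,q_{i+1}}(B(y,r))\le(r/|I_\omega^{w}|)^{\mathcal T^*(d_{i+1})-\varepsilon_{i+1}/2}$ for $y\in E(\omega,i+1,w,q_{i+1})$, whose proof compares $r$ with $|I_\omega^{w\ast v}|$ at two consecutive scales $n^{i+1}_k$, $n^{i+1}_{k+1}$ and uses the neighbor control $|v\wedge v\pm|\ge n^{i+1}_{k-1}$ built into the good set; simply taking $n_{k+1}\gg n_k$ does not produce this estimate.
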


\begin{remark}
	In fact we can choose $\widetilde{\epsilon}$ independent of $\omega$. \end{remark}

\begin{remark}\label{S to El}
	For any $x\in S(\omega,d,\xi,\widetilde{\epsilon})$ there are infinitely many  $n_i$ and $v(i)\in \Sigma_{\omega,n_i}$ such that
	$x\in B(z_{\omega}^{v(i)},(\ell_{\omega}^{v(i)})^\xi)$ and $\alpha^{n_i}_{\omega}(x)\le d+\epsilon_i$, so
	\begin{equation*}\label{}
	\dl(\vw,x) \leq  \liminf_{i\to \infty}\frac{\log \nu_{\omega}(B(x,(\ell_{\omega}^{v(i)})^\xi)) }{\xi \log\ell_{\omega}^{v(i)}}\le  \liminf_{i\to \infty}\frac{\log \nu_{\omega}(\{z_{\omega}^{v(i)}\}) }{\xi \log\ell_{\omega}^{v(i)}}\le   \liminf_{i\to \infty}      \frac{ \alpha^{n_i}_{\omega}(x)}{\xi}\le  \frac{d}{\xi},
	\end{equation*}
	since by remark~\ref{rem3} $\frac{\log \nu_{\omega}(\{z_{\omega}^{v(i)}\})}{\log\ell_{\omega}^{v(i)}}$ is asymptotically not bigger than  the Birkhoff average $\frac{S_{|v_i|}\Psi(\omega,\vl)}{S_{|v_i|}\Phi(\omega,\vl)}$. Consequently, 	$S(\omega,d,\xi,\widetilde{\epsilon})\subset \bigcup_{h\leq \frac{d}{\xi}}\El(\vw,h).$	
\end{remark}

\begin{proof}[Proof of theorem~\ref{ubiqu}] Again, we will use the properties collected in section~\ref{section:Some preparation}. We can assume without loss of generality that for all $i\ge 1$, for all $d_i\in\mathcal D_i$, we have 
	\begin{equation}\label{condeps}
	\varepsilon_i\le \min\left \{\frac{c_{\Phi}}{16\xi(C+2)(C+1)+2c_{\Phi}},\frac{1}{4C},\frac{1}{3+\xi},\frac{c_{\Phi}}{14},\frac{c_{\Phi}}{20(d_{i}+1)} \text{ with } d_{i}\in \mathcal{D}_{i}\right \}
	\end{equation}
	(take $(\varepsilon_i)_{i\ge 1}$ which goes quickly to 0, while  $\min \{\mathcal T^*(d_i):d_i\in \mathcal D_i\}$ goes slowly to $\inf \,\mathcal T^*(\mathcal T'(\cdot))$).  
	Recall that ${\theta}(i,\omega,k)$ is the $k$-th return time of $\omega$ to the set $\Omega_{i}$ under the mapping $\sigma$. 
	
	We  start by constructing a generalized Cantor set $K(\xi,\widetilde{d})$ for each $\xi>1$ and each sequence $\widetilde{d}=(d_i)_{i=1}^\infty\in\prod_{i=1}^\infty\mathcal D_i$. 
	\begin{description}
		\item[Step 1] Let $\omega\in \Omega'$, choose  $n={\theta}(1,\omega,s)$ large enough such that:
		\begin{itemize}
			\item $\frac{\log 4\Gamma_1}{n}\leq 1,$  where $\Gamma_1$ is the constant in Besicovich's covering theorem on $\mathbb R$.
			\item for any $p\geq n$, for any $v\in\Sigma_{\omega,p}$, one has
			$\epsilon(\Phi,\omega,p)\leq (\varepsilon_{1})^4$ and \eqref{control I-p}.
			\item $W(1)\leq n(\varepsilon_{1})^4$.
			
		\end{itemize}
		
		Fix $w\in \Sigma_{\omega,n}$. Recall that for each $d_1\in \mathcal{D}_1$ there is ${q_1}\in \mathcal{Q}_1$ with $\mathcal{T}'_{j_1}({q_1})=d_1$.
		
		From fact~\ref{preparation} and fact~\ref{modify2}, we know that properties  (BP) hold for $\sigma^n\omega\in\Omega_{1}$. In this step, $\kappa_{1}$, $n^1_{k}$ are defined with respect to $\sigma^n\omega$,  and also $\zeta_{\omega,{w},{q_1}}$ and 
		$E(\omega,1,w,q_1))$ are well defined.
		
		For $k\ge \kappa_1$ and  $x\in E(\omega,1,w,q_1)$, let $v(\omega,1,q_1,n^1_{k},x)$ be the unique word such that $x\in I_{\omega}^{w\ast v(\omega,1,q_1,n^1_{k},x)}$ and $v(\omega,1,q_1,n^1_{k},x)\in \Sigma_{\sigma^{n+M_{1}}\omega,n^1_{k}}$. For any $k\geq \kappa_1$, define
		$$\mathcal{F}_1({q_1},n+M_1+n^1_{k})=\left \{B(y,2\ell_{\omega}^{w\ast v(\omega,1,q_1,n^1_{k},y)}):y\in E(\omega,1,w,q_1)\right \}.$$	
		Then $\mathcal{F}_1({q_1},n+n^1_{k})$ is a covering of $E(\omega,1,w,q_1)$. By Besicovitch's covering theorem \cite[theorem 2.7]{Mattila}, there are $\Gamma_1$ families of balls $\mathcal{F}_1^1({q_1},n+n^1_{k}),\cdots,\mathcal{F}_1^{\Gamma_1}({q_1},n+n^1_{k})\subset \mathcal{F}_1({q_1},n+n^1_{k})$, such that  $E(\omega,1,w,q_1)\subset \bigcup_{s=1}^{\Gamma_1}\bigcup_{B\in\mathcal{F}_1^s({q_1},n+n^1_{k})}B$ and for any $B,B'\in \mathcal{F}_1^s({q_1},n+n^1_{k})$, if $B\neq B'$ one has $B\cap B'=\emptyset$ (where $\Gamma_1$ is a constant  depending on the dimension $1$  of the Euclidean space~$\mathbb{R}$).
		
		Since $\zeta_{\omega,{w},{q_1}}(E(\omega,1,w,q_1))>1/2$, there exists $s$ such that
		\begin{equation*}\label{}
		\zeta_{\omega,{w},q_1}\Big(\bigcup_{B\in{\mathcal{F}_1^s({q_1},k)}}B\Big )\geq \frac{1}{2\Gamma_1}.
		\end{equation*}
		Among the intervals of $\mathcal{F}_1^s(k)$ we can choose a finite subset
		$$
		D^{w}(1,{d_1},k)=\{B_1,\cdots,B_{s'}\}
		$$
		such that
		\begin{equation*}\label{}
		\zeta_{\omega,{w},{q_1}}\Big (\bigcup_{B_l\in D^{w}(1,{d_1},k)}B_l\Big)\geq \frac{1}{4\Gamma_1}.
		\end{equation*}
		For any $B_l\in D^{w}(1,{d_1},k)$, there exists $y_l\in E(\omega,1,w,q_1)$ such that $B_l=B(y_l,2\ell_{\omega}^{w\ast v(k,l)})$, where $v(k,l):=v(\omega,1,q_1,n^1_{k},y_l)$.
		Since
		\begin{eqnarray*}
			B\big(z_{\omega}^{{w\ast v(k,l)}},(\ell_{\omega}^{w\ast v(k,l)})^{\xi}\big)&\subset& B(z_{\omega}^{w\ast v(k,l)},\ell_{\omega}^{w\ast v(k,l)})\\
			&\subset &B(y_l,2\ell_{\omega}^{w\ast v(k,l)})=B_l,
		\end{eqnarray*}
		Choose $\kappa'_{1}>\kappa_{1}$ large enough such that:
		\begin{itemize}
			\item  for any $v\in\Sigma_{\sigma^{n+M_{1}}\omega,n^{1}_{\kappa'_{1}}}$ one has
			$2\ell_{\omega}^{w\ast v}\leq |I_{\omega}^{w}|\exp(-n(\varepsilon_{1})^2)$;
			\item for any $j\geq n+M_1+n^{1}_{\kappa'_{1}}$, one has 
			$\epsilon(\Phi,\omega,j)\leq (\varepsilon_{2})^4$;
			\item $W({2})\leq (\varepsilon_{2})^4 (n+n^{1}_{\kappa'_{1}})$;
			\item for any $s$ such that the return time $\theta({2},\omega,s)$ satisfies $\theta({2},\omega,s)\geq n+M_1+n^{{1}}_{\kappa'_{1}}$,  one also has $$\frac{ \theta({2},\omega,s)- \theta({2},\omega,s-1)}{ \theta({2},\omega,s-1)}\leq (\varepsilon_{1})^4.$$	
		\end{itemize}
		
		For any $k\geq \kappa'_{1}$, we can get (this will be justified in the next step, see \eqref{control zeta B(y,r)}): 
		\begin{equation}\label{control  measure 1-1}
		\zeta_{\omega,w,q_{1}}\big(B(y_l,2\ell_{\omega}^{w\ast v(k,l)})\big)\leq \left(\frac{4\ell_{\omega}^{w\ast v(k,l)}}{|I_{\omega}^w|}\right)^{\mathcal{T}^*(d_{1})-\frac{\varepsilon_{1}}{2}}.
		\end{equation}
		
		Let $s_{2}=s_{2}(\omega,w)$ be the smallest $s$ such that there exists $v\in \Sigma_{\omega,\theta(2,\omega,s_{2})}$ such that
		\begin{itemize}
			\item  $z_{\omega}^{w\ast v(k,l)}$ belongs to the closure of the interval $I_{\omega}^{v}$,
			\item  $I_{\omega}^{v}\subset B(z_{\omega}^{w\ast v(k,l)},(\ell_{\omega}^{w\ast v(k,l)})^{\xi})$.
		\end{itemize}
		By definition of $s_{2}$, setting $v'=v|_{\theta(2,\omega,s_{2}-1)}\in\Sigma_{\omega,\theta(2,\omega,s_{2}-1)}$, we have $|I_{\omega}^{v'}|\geq (\ell_{\omega}^{w\ast v(k,l)})^{\xi}$.
		Now let $\mathcal{K}_{1}$ be the largest $k$ such that $n+M_1+n^{1}_{k}\leq \theta(2,\omega,s_{2}-1)$ (by construction we have $\mathcal K_{1}\ge \kappa'_{1}$). Due to \eqref{control nk}, we have 
		$$\theta(2,\omega,s_{2}-1)-n-M_1-n^{1}_{\mathcal{K}_{1}}\leq  n^{1}_{\mathcal{K}_{1}+1}-n^{1}_{\mathcal{K}_{1}}\le (\varepsilon_{1})^4 n^{1}_{\mathcal{K}_{1}}.$$
		Then (this will be justified in the next step in \eqref{control overline{J_l}-{xi}})
		$$
		(2\ell_{\omega}^{w\ast v(k,l)})^{\xi})\leq 2^{\xi}|I_{\omega}^{v'}|
		\leq 2^{\xi}|I_{\omega}^{v}|^{1-(\varepsilon_{1})^3}\leq |I_{\omega}^{v}|^{1-(\varepsilon_{1})^2}.$$

		Define $J_l$ to be $\overline{I_v}$,  the closure of the interval $I_{\omega}^{v}$, and denote $\underline{B_l}=J_l$ and $B_l=\widehat{J_l}$. We get: 
		\begin{equation}\label{control Jl-1}
		|J_l|\leq |\widehat{J_l}|^{\xi}\leq |J_l|^{1-(\varepsilon_{1})^2}.
		\end{equation}
		
		Now using lemma \ref{lemma converge to d} of the next step, we can get that for $k$ large enough so that $n^{1}_{k}\geq\frac{n}{(\varepsilon_{1})^2}$, then for any $\vl\in [w\ast v(k,l)]_{\omega}$ we have:
		$$\left|\frac{S_{n+n^{1}_{k}}\Psi(\omega,\vl)}{S_{n+n^{1}_{k}}\Phi(\omega,\vl)}-d\right|\leq \epsilon_{1}.$$

		For $k>\kappa'_{1}$ large enough with $n^{1}_{k}\geq\frac{n}{(\varepsilon_{1})^2}$, define
		\begin{equation*}
		G^w(1,d_{1},k)=\{\underline{B_l},B_l\in D^w(1,d_{1},k)\}.
		\end{equation*}
		If $J_1$ and $J_2$ are two distinct elements of $G^w(1,d_{1},k)$ then their distance is at least $\max_{j\in \{1,2\}}(|\widehat{J_j}|/2-(|\widehat{J_j}|/2)^{\xi})$, which is larger than $\max_{j\in \{1,2\}}|\widehat{J_j}|/3$ for $k$ large enough (since $\xi>1$).
		
		Now we can define $m_{\xi}^{\{d_1\}}$ with $d_{1}\in\mathcal{D}_{1}$ as follows:
		\begin{equation*}\label{}
		m_{\xi}^{\{d_1\}}(J)=\frac{\zeta_{\omega,w,q_{1}}(\widehat{J})}{\sum_{J_l\in G^w(1,d_{1},k)}\zeta_{\omega,w,q_{1}}(\widehat{J_l})}.
		\end{equation*}
		For any $J\in  G^w(1,d_{1},k)$, by inequality \eqref{control  measure 1-1} and \eqref{control Jl-1}, we can get
		$$\zeta_{\omega,w,q_{1}}(\widehat{J})\leq |J|^{\frac{\mathcal{T}^*(d_{1})-\frac{2\varepsilon_{1}}{3}}{\xi}}|I_{\omega}^w|^{-\mathcal{T}^*(d_{1})}.$$
		Then, the inequality
		$
		\sum_{J_l\in G^w(1,d_{1},k)}\zeta_{\omega,w,q_{1}}(\widehat{J_l})\geq \frac{1}{4\Gamma_1}
		$
		yields, $\forall J\in G^w(1,d_{1},k)$:
		$$m_{\xi}^{\{d_1\}}(J)\leq 4\Gamma_1 |J|^{\frac{\mathcal{T}^*(d_{1})-\frac{2\varepsilon_{1}}{3}}{\xi}}|I_{\omega}^w|^{-\mathcal{T}^{*}(d_{1})}.$$
		
		Choose $k_1>\mathcal{N}'_{1}$ large enough with $n^1_{k_1}\geq\frac{n}{(\varepsilon_{1})^2}$. Then, for any $d_{1}\in\mathcal D_{1}$ and any $J\in G^w(1,d_{1}, k_{1})$, one has
		$4\Gamma_1|I_{\omega}^w|^{-\mathcal{T}^{*}(d_{1})}\leq |J|^{-\frac{\varepsilon_{1}}{3\xi}}$  (this will follow from a general estimate in the next step).
		
		Finally,  		   
		\begin{equation*}
		\forall J\in G^w(1,d_{1}, k_{1}),\ m_{\xi}^{\{d_1\}}(J)\leq  |J|^{\frac{\mathcal{T}^*(d_{1})-\varepsilon_{1}}{\xi}}.
		\end{equation*}
		For any $d_1\in\mathcal{D}_1$ define:
		$$G(d_1)=G^w(1,d_{1}, k_{1}),$$
		and
		$$G_{1}=\bigcup_{d_{1}\in \mathcal{D}_{1}}G^w(1,d_{1}, k_{1}).$$
		
		\item[Step 2]  Suppose that $G_i=\bigcup_{(d_1,\ldots,d_i)\in\prod_{j=1}^i\mathcal D_j}G(d_1,\ldots,d_i)$ is well defined and for any $\{d_j\}_{1\leq j\leq i}\in \prod_{1\leq j\leq i}\mathcal{D}_j$, the set function $m^{\{d_j\}_{1\leq j\leq i}}_{\xi}$ is well defined on the set $G(d_1,\ldots, d_i)$.
		
		For any $w $ such that $J$, the closure of $I^w_\omega$, belongs to $G(d_1\cdots d_i)\subset G_i$, we set $n=|w|$. In this step $n^{i+1}_k$ stands for $n^{i+1}_k(\sigma^n\omega)$.

		By construction:
		\begin{enumerate}
			\item $\sigma^n\omega\in {\Omega}_{i+1}$,
			\item for $p\geq n$, we have 
			\begin{equation}\label{control epsilon(Phi,omega,p)}
			\epsilon(\Phi,\omega,p)\leq (\varepsilon_{i+1})^4,
			\end{equation}
			\item $W(i+1)\leq n(\varepsilon_{i+1})^4$,
		\end{enumerate}
		For any $d_{i+1}\in \mathcal{D}_{i+1}$, take $q_{i+1}\in \mathcal{Q}_{i+1}$ such that $\mathcal{T}_{j_{i+1}}'(q_{i+1})=d_{i+1}$. From fact~\ref{preparation} and fact~\ref{modify2}, we know that (BP) hold for $\sigma^n\omega\in\Omega_{i+1}$. Also $\zeta_{\omega,{w},{q_{i+1}}}$ and $E(\omega,i+1,w,q_{i+1}))$ are well defined.
		
		For any $k\geq \kappa_{i+1}$, let
		$$\mathcal{F}_{i+1}(q_{i+1},n+M_{i+1}+n^{i+1}_k)=\{B(y,2\ell_{\omega}^{w\ast v(\omega,i+1,q_{i+1},n^{i+1}_k,y)}):y\in E(\omega,i+1,w,q_{i+1})\},$$
		where $v(\omega,i+1,q_{i+1},n^{i+1}_k,y)$ is the unique word such that $y\in I_{\omega}^{w\ast v(\omega,i+1,q_{i+1},n^{i+1}_k,y)}$ and $v(\omega,i+1,q_{i+1},n^{i+1}_k,y)\in\Sigma_{\sigma^{n+M_{i+1}}\omega,n^{i+1}_{k}}$.
		Then $\mathcal{F}_{i+1}(q_{i+1},n+M_{i+1}+n^{{i+1}}_k)$ is a covering of $E(\omega,i+1,w,q_{i+1})$.
		
		From the Besicovitch's covering theorem, $\Gamma_1$ families of disjoint balls, namely
		$$\mathcal{F}_{i+1}^1(q_{i+1},n+M_{i+1}+n^{{i+1}}_k),\cdots,\mathcal{F}_{i+1}^{\Gamma_1}(n+M_{i+1}+n^{{i+1}}_k)$$
		can be extracted from $\mathcal{F}_{i+1}(q_{i+1},n+M_{i+1}+n^{i+1}_k)$ so that 
		$$
		E(\omega,i+1,w,q_{i+1})\subset \bigcup_{s=1}^{\Gamma_1} \bigcup_{B\in \mathcal{F}_{i+1}^s(q_{i+1},n+M_{i+1}+n^{{i+1}}_k)}B.
		$$ Since $\zeta_{\omega,w,q_{i+1}}(E(\omega,i+1,w,q_{i+1}))\geq 1/2$, there exists $s$ such that
		\begin{equation*}\label{}
		\zeta_{\omega,w,q_{i+1}}\Big(\bigcup_{B\in{\mathcal{F}_{i+1}^s(q_{i+1},n+M_{i+1}+n^{{i+1}}_k)}}B\Big)\geq \frac{1}{2\Gamma_1}.
		\end{equation*}
		
		Again, we extract from $\mathcal{F}_{i+1}^s(q_{i+1},n+M_{i+1}+n^{{i+1}}_k)$ a finite family of pairwise disjoint intervals $D^w(i+1,d_{i+1},k)=\{B_1,\cdots,B_{s'}\}$ such that
		\begin{equation*}\label{}
		\zeta_{\omega,w,q_{i+1}}\Big(\bigcup_{B_l\in D^w(i+1,d_{i+1},k)}B_l\Big)\geq \frac{1}{4\Gamma_1}.
		\end{equation*}
		For each $B_l\in D^w(i+1,q_{i+1},k)$, there exists $y_l\in E(\omega,i+1,w,q_{i+1})$ such that $B_l=B\big(y_l,2\ell_{\omega}^{w\ast v(\omega,i+1,q_{i+1},n^{{i+1}}_k,y_l)}\big)$. Set $v(k,l)=v(\omega,i+1,q_{i+1},n^{{i+1}}_k,y_l)$. Moreover,
		\begin{eqnarray*}
			B\big(z_{\omega}^{w\ast v(k,l)},(\ell_{\omega}^{w\ast v(k,l)})^{\xi}\big) &\subset& B\big(z_{\omega}^{w\ast v(k,l)},\ell_{\omega}^{w\ast v(k,l)}\big) \\
			&\subset & B(y_l,2\ell_{\omega}^{w\ast v(k,l)})=B_l.
		\end{eqnarray*}	
		We can get the following lemma 		
		\begin{lemma}\label{lemma control zeta B(y,r)}
			For any $y\in E(\omega,i+1,w,q_{i+1})$, for $r\leq |I_{\omega}^{w}|\exp(-n(\varepsilon_{i+1})^2)$, we have 
			\begin{equation}\label{control zeta B(y,r)}
			\zeta_{\omega,w,q_{i+1}}\big(B(y,r)\big)\leq \left(\frac{r}{|I_{\omega}^{w}|}\right)^{\mathcal{T}^*(d_{i+1})-\frac{\varepsilon_{i+1}}{2}}.
			\end{equation}
		\end{lemma}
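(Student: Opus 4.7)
The strategy is to localize $B(y,r)$ at the appropriate scale $n^{i+1}_k$ in the subshift based at $\sigma^{n+M_{i+1}}\omega$, apply the per-cylinder spectral bound \eqref{control zeta} there, and convert the resulting $S\Phi_{j_{i+1}}$-quantity back into a log-ratio of interval lengths via proposition~\ref{for n}(ii). Denote by $v(k,y)$ the unique word in $\Sigma_{\sigma^{n+M_{i+1}}\omega,n^{i+1}_k}$ with $y\in I_\omega^{w\ast v(k,y)}$, and choose $k$ to be the smallest integer $\ge \kappa_{i+1}$ with $|I_\omega^{w\ast v(k,y)}|\le r$. Such a $k$ exists: combined with $W(i+1)\le n(\varepsilon_{i+1})^4$ and \eqref{control I-p} applied at base $\sigma^{n+M_{i+1}}\omega$, the hypothesis $r\le |I_\omega^w|\exp(-n(\varepsilon_{i+1})^2)$ forces $r$ to be strictly smaller than every cylinder at the minimal admissible scale $n^{i+1}_{\kappa_{i+1}}$.

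I next count cylinders at scale $n^{i+1}_k$ that meet $B(y,r)$. Property (v) of fact~\ref{preparation}, applied at $\sigma^{n+M_{i+1}}\omega$ to the point $y'\in E_{i+1,q_{i+1}}(\sigma^{n+M_{i+1}}\omega)$ corresponding to $y$, asserts that the two immediate neighbors $v(k,y)^{\pm}$ of $v(k,y)$ share with it a prefix of length at least $n^{i+1}_{k-1}$. Combined with \eqref{control nk}, this yields $|I_\omega^{w\ast v(k,y)^{\pm}}|=|I_\omega^{w\ast v(k,y)}|\exp(O(n(\varepsilon_{i+1})^3))$, and iterating the same reasoning shows that $B(y,r)$ meets only a bounded number (at most three for $\varepsilon_{i+1}$ small enough) of cylinders $I_\omega^{w\ast v'}$ at scale $n^{i+1}_k$, each satisfying $|I_\omega^{w\ast v'}|\le r\exp(O(n(\varepsilon_{i+1})^3))$. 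Hence
\[
\zeta_{\omega,w,q_{i+1}}(B(y,r))\le\sum_{v'}\zeta_{\omega,w,q_{i+1}}(I_\omega^{w\ast v'}).
\]

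For each such $v'$, the bound \eqref{control zeta} (applied at $\sigma^{n+M_{i+1}}\omega$) together with the negativity of $S_{|v'|}\Phi_{j_{i+1}}$ gives
\[
\log\zeta_{\omega,w,q_{i+1}}(I_\omega^{w\ast v'})\le\bigl(\mathcal T^*(d_{i+1})-(\varepsilon_{i+1})^2\bigr)\,S_{|v'|}\Phi_{j_{i+1}}(\sigma^{n+M_{i+1}}\omega,\underline{v'}).
\]
I convert this into a log-ratio of lengths via $|I_\omega^u|=\widetilde\mu_\omega([u])$ and proposition~\ref{for n}(ii), which yield $\log(|I_\omega^{w\ast v'}|/|I_\omega^w|)=S_{M_{i+1}+|v'|}\Phi(\sigma^n\omega,\cdot)+O(n(\varepsilon_{i+1})^4)$. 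Approximating $\Phi$ by $\Phi_{j_{i+1}}$ through \eqref{control Sp-var-ji} (cost $O(|v'|(\varepsilon_{i+1})^4)$) and discarding the finite shift $S_{M_{i+1}}\Phi_{j_{i+1}}(\sigma^n\omega,\cdot)$ (size $O(n(\varepsilon_{i+1})^4)$ by \eqref{Control S-n upper} and \eqref{control M}) converts the per-cylinder estimate into
\[
\log\zeta_{\omega,w,q_{i+1}}(I_\omega^{w\ast v'})\le\bigl(\mathcal T^*(d_{i+1})-(\varepsilon_{i+1})^2\bigr)\log\frac{|I_\omega^{w\ast v'}|}{|I_\omega^w|}+O(n(\varepsilon_{i+1})^3).
\]

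Finally, all errors are absorbed by the margin. The hypothesis on $r$ supplies the crucial budget $|\log(r/|I_\omega^w|)|\ge n(\varepsilon_{i+1})^2$; since each $|I_\omega^{w\ast v'}|\le r\exp(O(n(\varepsilon_{i+1})^3))$, both the additive $O(n(\varepsilon_{i+1})^3)$ error and the $O(1)$ cylinder-multiplicity factor from summing over $v'$ are dominated by $(\varepsilon_{i+1}/2-(\varepsilon_{i+1})^2)\cdot n(\varepsilon_{i+1})^2$, the slack gained by weakening the exponent from $\mathcal T^*(d_{i+1})-(\varepsilon_{i+1})^2$ to $\mathcal T^*(d_{i+1})-\varepsilon_{i+1}/2$. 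This produces the claimed inequality. The principal obstacle is bookkeeping: every correction term (the variational approximation $\Phi\leftrightarrow\Phi_{j_{i+1}}$, the $M_{i+1}$-shift, the width distortions from \eqref{control nk}--\eqref{control psi/Phi}, the cylinder multiplicity) must be verified strictly smaller than $n(\varepsilon_{i+1})^2$, and one must check at the outset that the selected $k$ genuinely satisfies $k\ge\kappa_{i+1}$ so that every item of fact~\ref{preparation} is applicable.
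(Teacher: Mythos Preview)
Your strategy is essentially the paper's: localize $B(y,r)$ at a suitable scale $n^{i+1}_k$, invoke the neighbor--prefix property from fact~\ref{preparation}(v), apply \eqref{control zeta}, and translate back via proposition~\ref{for n}(ii). The paper does exactly this, but with one crucial difference in the choice of $k$ that your version lacks, and this creates a genuine gap.

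You pick $k$ to be the \emph{smallest} index with $|I_\omega^{w\ast v(k,y)}|\le r$, and then assert that $B(y,r)$ meets at most three cylinders at scale $n^{i+1}_k$, justifying this by ``iterating the same reasoning.'' But property~(v) of fact~\ref{preparation} is a statement about the cylinder containing the specific point $y'\in E_{i+1,q_{i+1}}$ and \emph{its} two neighbors; it says nothing about $v(k,y)^{\pm\pm}$ or beyond, so there is nothing to iterate. Concretely: from $|I_\omega^{w\ast v(k-1,y)}|>r$ and the ratio bound one only gets $|I_\omega^{w\ast v(k,y)^{\pm}}|>r\exp\bigl(-(C+2)(n+M_{i+1}+n^{i+1}_{k-1})(\varepsilon_{i+1})^4\bigr)$, which is slightly \emph{less} than $r$. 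Hence $B(y,r)$ may protrude past $v(k,y)^{\pm}$ into cylinders where neither the prefix comparison nor \eqref{control zeta} is available. The paper avoids this by building a safety margin into the choice of $k$: it takes $k$ to be the \emph{largest} index with $r\le |I_\omega^{w\ast v}|\exp\bigl(-(C+2)(n+M_{i+1}+n^{i+1}_k)(\varepsilon_{i+1})^4\bigr)$. Then at scale $k+1$ each of $v'^-,v',v'^+$ has length $\ge r$ genuinely (the margin absorbs the ratio loss from~\eqref{ineg0}), so $B(y,r)\subset I_\omega^{w\ast v'^-}\cup I_\omega^{w\ast v'}\cup I_\omega^{w\ast v'^+}\subset I_\omega^{w\ast v}$, and one bounds $\zeta$ of a single cylinder.

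A secondary point: your error terms are written as $O(n(\varepsilon_{i+1})^3)$ with $n=|w|$, but the corrections from proposition~\ref{for n}(ii), \eqref{control Sp-var-ji} and \eqref{control nk} scale with $n+M_{i+1}+n^{i+1}_k$, which can be much larger than $n$. The right comparison is against $\bigl|\log(r/|I_\omega^w|)\bigr|$: one shows $n^{i+1}_k\le \tfrac{2}{c_\Phi}\bigl|\log(r/|I_\omega^w|)\bigr|$ (as the paper does in~\eqref{control n^{i+1}_{k}}) and, conversely, uses the hypothesis on $r$ to derive $n\le 2Cn^{i+1}_k/(\varepsilon_{i+1})^2$, so that every $O\bigl((n+n^{i+1}_k)(\varepsilon_{i+1})^4\bigr)$ error is $O\bigl(\bigl|\log(r/|I_\omega^w|)\bigr|(\varepsilon_{i+1})^2\bigr)$ and hence absorbed by the slack $\tfrac{\varepsilon_{i+1}}{2}\cdot\bigl|\log(r/|I_\omega^w|)\bigr|$. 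Your absorption claim ``$O(n(\varepsilon_{i+1})^3)$ is dominated by $(\varepsilon_{i+1}/2)\cdot n(\varepsilon_{i+1})^2$'' compares quantities of the same order and does not close.
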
	
		\begin{proof}[Proof of the lemma.]
			The idea of the proof is the following. Noticing $y\in E(\omega,i+1,w,q_{i+1})$, If $r$ small enough, we will know that $v(n+M_{i+1}+n^{i+1}_{k},y)$ is a comment prefix $v(n+M_{i+1}+n^{i+1}_{k+1},y)$ and its left and right. This will implies $B(x,r)\subset v(n+M_{i+1}+n^{i+1}_{k+1},y)$ and $r$ is comparable with $|v(n+M_{i+1}+n^{i+1}_{k+1},y)|$. The result follows since we have a good control of the measure $v(n+M_{i+1}+n^{i+1}_{k},y)$.
			
			First, for any $y\in E(\omega,i+1,w,q_{i+1})$, for any $k\geq \kappa_{i+1}$ such that $y\in I_{\omega}^{w\ast v}$ with $v\in\Sigma_{\sigma^{n+M_{i+1}}\omega,n^{i+1}_k}$, for any $v'\in\Sigma_{\sigma^{n+M_{i+1}}\omega,n^{i+1}_{k+1}}$ with $v'|_{n^{i+1}_k}=v$, we have
			\begin{eqnarray}
			\nonumber          \frac{|I_{\omega}^{w\ast v'}|}{|I_{\omega}^{w\ast v}|}&=&\frac{\widetilde{\mu}_{\omega}([w\ast v']_{\omega})}{\widetilde{\mu}_{\omega}([w\ast v]_{\omega})}\\
			\nonumber      &\geq&\exp\big(S_{n^{i+1}_{k+1}-n^{i+1}_{k}}\Phi(F^{n+M_{i+1}+n^{i+1}_{k}}(\omega,\vl'))-2(n+M_{i+1}+n^{i+1}_{k+1})(\varepsilon_{i+1})^4\big)\ (\text{see }\eqref{control epsilon(Phi,omega,p)})\\
			\nonumber          &\geq&\exp\big(-(n^{i+1}_{k+1}-n^{i+1}_{k})C-2(n+M_{i+1}+n^{i+1}_{k+1})(\varepsilon_{i+1})^4\big)\\
			\nonumber &&(\text{noting } \sigma^{n+M_{i+1}+n^{i+1}_{k}}\omega\in\Omega_{i+1} \text{ and } \eqref{Control S-n upper})\\
			\label{ineg0}           &\geq&\exp\big(-(C+2)(n+M_{i+1}+n^{i+1}_{k})(\varepsilon_{i+1})^4\big)\ (\text{see }\eqref{control nk}).
			\end{eqnarray}
			Next, choose the largest $k$ and $v\in\Sigma_{\sigma^{n+M_{i+1}}\omega,n^{i+1}_k}$ such that $y\in I_{\omega}^{w\ast v}$ and $r\leq |I_{\omega}^{w\ast v}|\exp(-(C+2)(n+M_{i+1}+n^{i+1}_{k})(\varepsilon_{i+1})^4)$. 
			We have, applying the shorthand $\widetilde F^n=F^{n+M_{i+1}}$:
			\begin{eqnarray*}\label{}
				\frac{r}{|I_{\omega}^{w}|}&\leq&\frac{|I_{\omega}^{w\ast v}|\exp\big(-(C+2)(n+M_{i+1}+n^{i+1}_{k})(\varepsilon_{i+1})^4\big)}{|I_{\omega}^{w}|}\\&=&\frac{\widetilde{\mu}_{\omega}([w\ast v]_{\omega})\exp\big(-(C+2)(n+M_{i+1}+n^{i+1}_{k})(\varepsilon_{i+1})^4\big)}{\widetilde{\mu}_{\omega}([w]_{\omega})}\\
				&\leq&\frac{\widetilde{\mu}_{\omega}([w\ast v]_{\omega})\exp(-(C+2)(n+M_{i+1}+n^{i+1}_{k})(\varepsilon_{i+1})^4)}{\widetilde{\mu}_{\omega}([w\ast]_{\omega})}\\
				&\leq&\exp\big(S_{n^{i+1}_{k}}\Phi(\widetilde F^n(\omega,\vl))+(n+M_{i+1})\epsilon(\Phi,\omega,n+M_{i+1})\big)\\
				&&\cdot\exp\big((n+n^{i+1}_{k})\epsilon(\Phi,\omega,n+M_{i+1}+n^{i+1}_{k})-(C+2)(n+M_{i+1}+n^{i+1}_{k})(\varepsilon_{i+1})^4\big)\\
				&\leq&\exp(S_{n^{i+1}_{k}}\Phi(\widetilde F^n(\omega,\vl)))\\
				&&\cdot \exp \big(2(n+M_{i+1}+n^{i+1}_{k})(\varepsilon_{i+1})^4-(C+2)(n+M_{i+1}+n^{i+1}_{k})(\varepsilon_{i+1})^4\big)\ (\text{due to \eqref{control epsilon(Phi,omega,p)}})\\
				&&\\
				&\leq&\exp\Big (-\frac{(n^{i+1}_{k})c_{\Phi}}{2}-C(n+M_{i+1}+n^{i+1}_{k})(\varepsilon_{i+1})^4\big )\ (\text{due to  \eqref{control Upsilon small}})\\
				&\leq& \exp \Big (-\frac{n^{i+1}_{k}c_{\Phi}}{2} \Big ) \ (\text{due to  \eqref{control M}}),
			\end{eqnarray*} 
			so 
			\begin{equation}\label{control n^{i+1}_{k}}
			n^{i+1}_{k}\leq-\frac{2\log (\frac{r}{|I_{\omega}^{w}|})}{c_{\Phi}}.
			\end{equation}      
			Also, since $y\in E(\omega,i+1,w,q_{i+1})$, for $v'\in\Sigma_{\sigma^{n+M_{i+1}}\omega,n^{i+1}_{k+1}}$ such that $y\in I_{\omega}^{w\ast v'}$, we know that $v$ is a common prefix of $v'+, v'-$ and $v'$. Thus, due to \eqref{ineg0} and our choice for $k$,  $B(y,r)\subset (I_{\omega}^{w\ast v'-}\cup I_{\omega}^{w\ast v'}\cup I_{\omega}^{w\ast v'+})\subset I_{\omega}^{w\ast v}$ and
			$r\geq |I_{\omega}^{w\ast v'}|\exp(-(C+2)(n+M_{i+1}+n^{i+1}_{k+1})(\varepsilon_{i+1})^4)$. From the last inequality we can get
			\begin{eqnarray*}\label{}
				\frac{r}{|I_{\omega}^{w}|}&\geq&\frac{|I_{\omega}^{w\ast v'}|\exp(-(C+2)(n+M_{i+1}+n^{i+1}_{k+1})(\varepsilon_{i+1})^4)}{|I_{\omega}^{w}|}\\
				&=&\frac{\widetilde{\mu}_{\omega}([w\ast v']_{\omega})\exp(-(C+2)(n+M_{i+1}+n^{i+1}_{k+1})(\varepsilon_{i+1})^4)}{\widetilde{\mu}_{\omega}([w]_{\omega})}\\
				&\geq&\exp\big (-(C+2)(n+M_{i+1}+n^{i+1}_{k+1})(\varepsilon_{i+1})^4\big )\\
				&&\cdot\exp\big (S_{M_{i+1}+n^{i+1}_{k+1}}\Phi(F^{n}(\omega,\vl'))-2(n+M_{i+1}+n^{i+1}_{k+1})(\varepsilon_{i+1})^4\big)\\
				&&\text{(due to proposition~\ref{for n} and \eqref{control epsilon(Phi,omega,p)} )}\\
				&\geq& \exp\big (S_{n^{i+1}_{k}}\Phi(\widetilde F^n(\omega,\vl'))-M_{i+1}C-(n^{i+1}_{k+1}-n^{i+1}_{k})C\big )\\
				&&\quad\quad\cdot\exp\big (-(C+4)(n+M_{i+1}+n^{i+1}_{k+1})(\varepsilon_{i+1})^4\big )\\
				&\geq&\exp\big (S_{n^{i+1}_{k}}\Phi(\widetilde F^n(\omega,\vl'))-2n^{i+1}_{k}C(\varepsilon_{i+1})^4\big )\\
				&&\quad\quad\cdot\exp\big (-(C+4)(n+n^{i+1}_{k}(\varepsilon_{i+1})^4+n^{i+1}_{k}(1+(\varepsilon_{i+1})^4))(\varepsilon_{i+1})^4\big )\\
				&&\text{(see \eqref{control M}to control $M_{i+1}$ and \eqref{control nk} for $n^{i+1}_{k+1}-n^{i+1}_{k}$ since $k\geq \kappa_{i+1}$).}
			\end{eqnarray*}
			Thus, noticing that $\varepsilon_{i+1}<1/2$, we get  
			$$       \frac{r}{|I_{\omega}^{w}|}   \geq\exp\big (S_{n^{i+1}_{k}-M_{i+1}}\Phi(\widetilde F^n(\omega,\vl'))-(4C+8)(n+n^{i+1}_{k})(\varepsilon_{i+1})^4\big ).
			$$
			Since $r\leq |I_{\omega}^{w}|\exp(-n(\varepsilon_{i+1})^2)$, we deduce 
			\begin{eqnarray*}
				n(\varepsilon_{i+1})^2&\leq& -S_{n^{i+1}_{k}}\Phi(\widetilde F^n(\omega,\vl')+(4C+8)(n+n^{i+1}_{k})(\varepsilon_{i+1})^4\\
				&\leq& Cn^{i+1}_{k}+(4C+8)(n+n^{i+1}_{k})(\varepsilon_{i+1})^4.
			\end{eqnarray*}
			This yields $n^{i+1}_{k}\geq \frac{n(\varepsilon_{i+1})^2-(4C+8)n(\varepsilon_{i+1})^4}{C+(4C+8)(\varepsilon_{i+1})^4}\geq\frac{n(\varepsilon_{i+1})^2}{2C}$, so $n\leq 2Cn^{i+1}_{k}/((\varepsilon_{i+1})^2)$, then,
			\begin{equation}\label{control r/I lower}
			\frac{r}{|I_{\omega}^{w}|}\geq \exp\big (S_{n^{i+1}_{k}}\Phi(\widetilde F^n(\omega,\vl')-(4C+8)(2C+1)n^{i+1}_{k}(\varepsilon_{i+1})^2\big). 
			\end{equation} 
			
			Consequently, 
			\begin{eqnarray*}
				&&\zeta_{\omega,w,q_{i+1}}(B(y,r))\\
				&\leq&\zeta_{\omega,w,q_{i+1}}(I_{\omega}^{w\ast v})=\zeta^{{\Lambda}_{{i+1},{q_{i+1}}}}_{\sigma^{n+M_{i+1}}\omega}(I_{\sigma^{n+M_{i+1}}\omega}^{v})\\
				&\leq&\exp((\mathcal{T}^*(d_{i+1})-(\varepsilon_{i+1})^2)S_{n^{i+1}_{k}}\Phi_{j_{i+1}}(\widetilde F^n(\omega,\vl')))\ (\text{ see \eqref{control zeta}})\\
				&\leq& \exp\big((\mathcal{T}^*(d_{i+1})-(\varepsilon_{i+1})^2)(S_{n^{i+1}_{k}}\Phi(\widetilde F^n(\omega,\vl'))+2n^{i+1}_{k}(\varepsilon_{i+1})^4)\big)\\
				&\leq&\left(\frac{r}{|I_{\omega}^{w}|}\right)^{\mathcal{T}^*(d_{i+1})-(\varepsilon_{i+1})^2}\\
				&&\quad \quad \cdot\exp\big((\mathcal{T}^*(d_{i+1})-(\varepsilon_{i+1})^2)((4C+8)(2C+1)n^{i+1}_{k}(\varepsilon_{i+1})^2)+2n^{i+1}_{k}(\varepsilon_{i+1})^4\big),
			\end{eqnarray*} where we have used \eqref{control r/I lower}.  Then, 
			\begin{eqnarray*}
				\zeta_{\omega,w,q_{i+1}}(B(y,r))&\leq&\left(\frac{r}{|I_{\omega}^{w}|}\right)^{\mathcal{T}^*(d_{i+1})-(\varepsilon_{i+1})^2}\exp\big(8(C+2)(C+1)n^{i+1}_{k}(\varepsilon_{i+1})^2\big)\\
				&\leq&\left(\frac{r}{|I_{\omega}^{w}|}\right)^{\mathcal{T}^*(d_{i+1})-(\varepsilon_{i+1})^2}	\left(\frac{r}{|I_{\omega}^{w}|}\right )^{-\frac{8(C+2)(C+1) (\varepsilon_{i+1})^2}{2c_{\Phi}}}\  {\rm ( using \ \eqref{control n^{i+1}_{k}})}\\
				&\leq&\left(\frac{r}{|I_{\omega}^{w}|}\right)^{\mathcal{T}^*(d_{i+1})-(\frac{4(C+2)(C+1)}{c_{\Phi}}+1)(\varepsilon_{i+1})^2}\leq\left(\frac{r}{|I_{\omega}^{w}|}\right)^{\mathcal{T}^*(d_{i+1})-\frac{\varepsilon_{i+1}}{2}}
			\end{eqnarray*}  
			(\text{from the choice }$\varepsilon_{i+1}\leq \frac{c_{\Phi}}{16(C+2)(C+1)+2c_{\Phi}}$).  \end{proof}
		Choose $\kappa'_{i+1}>\kappa_{i+1}$ large enough so that:
		\begin{itemize}
			\item  for any $v\in\Sigma_{\sigma^{n+M_{i+1}}\omega,n^{i+1}_{\kappa'_{i+1}}}$ one has
			$2\ell_{\omega}^{w\ast v}\leq |I_{\omega}^{w}|\exp(-n(\varepsilon_{i+1})^2)$;
			\item for any $j\geq n+n^{i+1}_{\kappa'_{i+1}}$, one has 
			$\epsilon(\Phi,\omega,j)\leq (\varepsilon_{i+2})^4$;
			\item $W({i+2})\leq (\varepsilon_{i+2})^4 (n+M_{i+2}+n^{i+1}_{\kappa'_{i+1}})$;	
			\item for any $s$ such that the return time $\theta({i+2},\omega,s)$ satisfies $\theta({i+2},\omega,s)\geq n+M_{i+1}+n^{{i+1}}_{\kappa'_{i+1}}$,  one also has $$\frac{ \theta({i+2},\omega,s)- \theta({i+2},\omega,s-1)}{ \theta({i+2},\omega,s-1)}\leq (\varepsilon_{i+1})^4.$$	
		\end{itemize}
		
		For any $k\geq \kappa'_{i+1}$, from \eqref{control zeta B(y,r)} we can get: 
		\begin{equation}\label{control  measure i-1}
		\zeta_{\omega,w,q_{i+1}}(B(y_l,2\ell_{\omega}^{w\ast v(k,l)}))\leq \left(\frac{4\ell_{\omega}^{w\ast v(k,l)}}{|I_{\omega}^w|}\right)^{\mathcal{T}^*(d_{i+1})-\frac{\varepsilon_{i+1}}{2}}.
		\end{equation}
		
		Let $s_{i+2}=s_{i+2}(\omega,w)$ be the smallest $s$ such that there exists $v\in \Sigma_{\omega,\theta(i+2,\omega,s_{i+2})}$ such that
		\begin{itemize}
			\item  $z_{\omega}^{w\ast v(k,l)}$ belongs to the closure of the interval $I_{\omega}^{v}$,
			\item  $I_{\omega}^{v}\subset B(z_{\omega}^{w\ast v(k,l)},(\ell_{\omega}^{w\ast v(k,l)})^{\xi})$
		\end{itemize}
		Define $J_l$ to be $\overline{I_{\omega}^{v}}$, the closure of the interval $I_{\omega}^{v}$, and denote $\underline{B_l}=J_l$ and $B_l=\widehat{J_l}$. From the construction,
		we can claim: 
		\begin{equation}\label{control overline{J_l}-{xi}}
		|J_l|\leq |\widehat{J_l}|^{\xi}\leq |J_l|^{1-(\varepsilon_{i+1})^2}.
		\end{equation}
		
		Since $s_{i+2}$ is the smallest one, so for $v'=v|_{\theta(i+2,\omega,s_{i+2}-1)}\in\Sigma_{\omega,\theta(i+2,\omega,s_{i+2}-1)}$, we have $|I_{\omega}^{v'}|\geq (\ell_{\omega}^{w\ast v(k,l)})^{\xi}$.
		Now let $\mathcal{K}_{i+1}$ be the largest $k$ such that $n+M_{i+1}+n^{i+1}_{k}\leq \theta(i+2,\omega,s_{i+2}-1)$ (by construction we have $k\ge \kappa'_{i+1}$). Due to \eqref{control nk}, we have 
		$$\theta(i+2,\omega,s_{i+2}-1)-n-M_{i+1}-n^{i+1}_{\mathcal{K}_{i+1}}\leq  n^{i+1}_{\mathcal{K}_{i+1}+1}-n^{i+1}_{\mathcal{K}_{i+1}}\le (\varepsilon_{i+1})^4 n^{i+1}_{\mathcal{K}_{i+1}}.$$
		
		Using a similar method as in the proof of \eqref{control nk}, we can get 
		\begin{equation*}\label{Control I-v-v'}
		\frac{|I_{\omega}^{v'}|}{|I_{\omega}^{v}|}\leq \exp\big(2(C+1)\theta(i+2,\omega,s_{i+2})(\varepsilon_{i+1})^4)\big)
		\end{equation*}
		%
		Since $|I_{\omega}^{v}|\leq \exp(-\frac{c_{\Phi}\theta(i+2,\omega,s_{i+2})}{2})$ by the definition of $c_{\Phi}$, we get 
		\begin{eqnarray*}
			\frac{|I_{\omega}^{v'}|}{|I_{\omega}^{v}|}&\leq& \exp\left(-\frac{c_{\Phi}\theta(i+2,\omega,s_{i+2})}{2}\cdot\frac{-4(C+1)(\varepsilon_{i+1})^4)}{c_{\Phi}}\right)\\
			&\leq&|I_{\omega}^{v}|^{\frac{-4(C+1)(\varepsilon_{i+1})^4)}{c_{\Phi}}}\leq |I_{\omega}^{v}|^{-(\varepsilon_{i+1})^3},
		\end{eqnarray*}
		so
		\begin{equation*}
		(2\ell_{\omega}^{w\ast v(k,l)})^{\xi}\leq 2^{\xi}|I_{\omega}^{v'}|
		\leq 2^{\xi}|I_{\omega}^{v}|^{1-(\varepsilon_{i+1})^3}\leq |I_{\omega}^{v}|^{1-(\varepsilon_{i+1})^2}.
		\end{equation*}
		So that \eqref{control overline{J_l}-{xi}} follows.
		\medskip

		For $k$ large enough so that $n^{i+1}_{k}\geq\frac{n}{(\varepsilon_{i+1})^2}$, define
		\begin{equation*}
		G^w(i+1,d_{i+1},k)=\{\underline{B_l},B_l\in D^w(i+1,d_{i+1},k)\}.
		\end{equation*}
		If $J_1$ and $J_2$ are two distinct elements of $G^w(i+1,d_{i+1},k)$ then their distance is at least $\max_{i\in \{1,2\}}(|\widehat{J_i}|/2-(|\widehat{J_i}|/2)^{\xi})$, which is larger than $\max_{i\in \{1,2\}}|\widehat{J_i}|/3$ for $k$ large enough (since $\xi>1$).
		
		We can define $m_{\xi}^{\{d_j\}_{1\leq j\leq i+1}}$ with $d_{i+1}\in\mathcal{D}_{i+1}$ as follows,
		\begin{equation*}\label{}
		m_{\xi}^{\{d_j\}_{1\leq j\leq i+1}}(J)=\frac{\zeta_{\omega,w,q_{i+1}}(\widehat{J})}{\sum_{J_l\in G^w(i+1,d_{i+1},k)}\zeta_{\omega,w,q_{i+1}}(\widehat{J_l})}\left(m_{\xi}^{\{d_j\}_{1\leq j\leq i}}(\overline{I_{\omega}^w})\right).
		\end{equation*}
		For any $J\in  G^w(i+1,d_{i+1},k)$, from the inequality \eqref{control  measure i-1} we get obtain
		\begin{eqnarray*}
			\zeta_{\omega,w,q_{i+1}}(\widehat{J}) & \leq & \left(\frac{|\widehat{J}|}{|I_{\omega}^w|}\right)^{\mathcal{T}^*(d_{i+1})-\frac{\varepsilon_{i+1}}{2}}\\
			& \leq & |J|^{\frac{(\mathcal{T}^*(d_{i+1})-\frac{\varepsilon_{i+1}}{2})(1-(\varepsilon_{i+1})^2)}{\xi}}|I_{\omega}^w|^{-\mathcal{T}^*(d_{i+1})}\\
			&\leq &|J|^{\frac{\mathcal{T}^*(d_{i+1})-\frac{2\varepsilon_{i+1}}{3}}{\xi}}|I_{\omega}^w|^{-\mathcal{T}^*(d_{i+1})}.
		\end{eqnarray*}
		Then, the inequality
		\begin{equation*}\label{}
		\sum_{J_l\in G^w(n,i+1,k)}\zeta_{\omega,w,q_{i+1}}(\widehat{J_l})\geq \frac{1}{4\Gamma_1},
		\end{equation*}
		yields, $\forall J\in G^w(i+1,d_{i+1},k)$:
		$$m_{\xi}^{\{d_j\}_{1\leq j\leq i+1}}(J)\leq 4\Gamma_1 |J|^{\frac{\mathcal{T}^*(d_{i+1})-\frac{2\varepsilon_{i+1}}{3}}{\xi}}|I_{\omega}^w|^{-\mathcal{T}^{*}(d_{i+1})}.$$

		Let $k_{i+1}>\kappa'_{i+1}$ large enough so that $n^{i+1}_{k_{i+1}}>\frac{n}{(\varepsilon_{i+1})^2}$. For any $d_{i+1}\in\mathcal D_{i+1}$ and
		for any $J\in G^w(i+1,d_{i+1}, k_{i+1})$, one has
		\begin{eqnarray*}
			&&4\Gamma_1|I_{\omega}^w|^{-\mathcal{T}^{*}(d_{i+1})} \\& \leq &4\Gamma_1(\exp(-Cn))^{-\mathcal{T}^{*}(d_{i+1})} \\
			&\leq&\exp(C\mathcal{T}^{*}(d_{i+1})n+\log(4\Gamma_1)) \\
			&\leq&\exp\left (\frac{c_{\Phi}}{6\xi}\cdot\frac{n}{\varepsilon_{i+1}}\right)\ (\text{noticing that } \mathcal{T}^{*}\leq -\mathcal{T}(0)=1, \log(4\Gamma_1)\leq n \text{ and } \varepsilon_{i+1}\leq
			\frac{c_{\Phi}}{6\xi(C+1)})\\
			&\leq&
			\exp\left (\frac{c_{\Phi}\varepsilon_{i+1}}{6\xi}(n+M_{i+1}+n^{i+1}_{k_{i+1}})\right)=\left (\exp\left (-\frac{c_{\Phi}}{2}(n+M_{i+1}+n^{i+1}_{k_{i+1}}\right )\right)^{-\frac{\varepsilon_{i+1}}{3\xi}}\\
			&\leq&|\widehat{J}|^{-\frac{\varepsilon_{i+1}}{3\xi}}\leq|J|^{-\frac{\varepsilon_{i+1}}{3\xi}},
		\end{eqnarray*}
		where the second inequality in the last line comes from \eqref{control I-p} and we had chosen $n$ large enough in the first step.
		Consequently, 
		\begin{equation*}
		\forall J\in G^w(i+1,d_{i+1}, k_{i+1}),\ m_{\xi}^{\{d_j\}_{1\leq j\leq i+1}}(J)\leq  |J|^{\frac{\mathcal{T}^*(d_{i+1})-\varepsilon_{i+1}}{\xi}}.
		\end{equation*}
		For $(d_j)_{1\leq j\leq i+1}\in\prod_{j=1}^{i+1}\mathcal{D}_j$ define:
		$$G(d_1,d_2,\cdots, d_{i+1})=\bigcup_{w\in G(d_1,d_2,\cdots, d_{i})}G^w(i+1,d_{i+1}, k_{i+1}),$$
		and
		$$G_{i+1}=\bigcup_{w\in G(i)}\bigcup_{q_{i+1}\in \mathcal{Q}_{i+1}}G^w(i+1,d_{i+1}, k_{i+1}).$$
		The definition of $m_{\xi}^{\{d_j\}_{1\leq j\leq i+1}}$ can be extended to the algebra generated by $$\bigcup_{s\leq i+1} G(d_1,d_2,\cdots, d_s),$$ and for any $J=I_{\omega}^v\in G(d_1,d_2,\cdots, d_s)$, $$m_{\xi}^{\{d_j\}_{1\leq j\leq i+1}}(J)\leq |J|^{\frac{\mathcal{T}^*(d_{i+1})-\varepsilon_{i+1}}{\xi}}.$$
		
		\item[Step 3] For any $\widetilde{d}=\{d_i\}_{i\in \mathbb{N}}\in\prod_{i=1}^\infty \mathcal D_i$, for any $J\in G(d_1,\cdots, d_i)$, define
		$m_{\xi}^{\widetilde{d}}(J)=m_{\xi}^{\{d_j\}_{1\leq j\leq i}}(J)$. This yields a probability measure $m_{\xi}^{\widetilde{d}}$  on the algebra generated by $\bigcup_{i\in \mathbb{N}} G(d_1,\cdots, d_i)$.
		
		For  any $i\in \N$, the elements in $G(d_1,\cdots, d_i)$ are closed and disjoint intervals. Also, for any $J\in G(d_1,\cdots, d_i)$, let $\widehat{J}$  be the ball associated with $J$. We have the following properties:
		
		\begin{enumerate}
			\item\label{i}  			
			\begin{itemize}
				\item $J\subset \widehat{J},$  for any $J\in G(d_1,\cdots, d_i)$;
				\item for any $J\in G(d_1,\cdots, d_i)$
				\begin{equation*}
				|J|\leq |\widehat{J}|^{\xi}\leq |J|^{1-(\varepsilon_{i})^3};
				\end{equation*}
				\item if $J_1\neq J_2$ belong to $G(d_1,\cdots, d_i)$, their distance is at least $\max_{l\in\{1,2\}}\frac{|\widehat{J_l}|}{3}$;
				\item The intervals $\widehat{J_l}$, $J_l\in G(d_1,\cdots, d_i)$, are disjoint.
			\end{itemize}
			\item\label{ii} For any $J$ in $G(d_1,d_2,\cdots,d_i)$, 
			$\widehat{J}\cap E(\omega,i,w,q_i)\neq \emptyset$, where $q_i\in \mathcal{Q}_i$ is such that $\mathcal{T}_{j_i}'(q_i)=d_i$ and $ E(\omega,i,w,q_i)$ is the set used in step 2.
			\item\label{iii} For any $J\in G(d_1,d_2,\cdots,d_i)$,
			\begin{equation}\label{F control}
			m_{\xi}^{\widetilde{d}}(J)\leq |J|^{\frac{\mathcal{T}^*(d_{i})-\varepsilon_{i}}{\xi}}.
			\end{equation}
			\item\label{iv} Any $J$ in $G(d_1,d_2,\cdots,d_i)$ is contained in  some element $L=\overline{I_{\omega}^w}\in G(d_1,d_2,\cdots,d_{i-1})$ such that
			\begin{equation*}\label{}
			m_{\xi}^{\widetilde{d}}(J)\leq 4\Gamma_1 m_{\xi}^{\widetilde{d}}(L)\zeta_{\omega,w,q_{i}}(\widehat{J}),
			\end{equation*}
			where $q_i\in \mathcal{Q}_i$ is such that $\mathcal{T}_{j_i}'(q_i)=d_i$.
		\end{enumerate}
		
		Because of the separation property \ref{i}, we get a probability measure $m_{\xi}^{\widetilde{d}}$ on $\sigma(J:J\in\bigcup_{i\geq 1} G(d_1,d_2,\cdots,d_i))$ such that properties \ref{i} to \ref{iv} hold for every $i\geq 1$. We now define
		$$K(\xi,\widetilde{d})=\bigcap_{i\geq 1}\bigcup_{J\in G(d_1,\cdots,d_i)}J,$$
		then, $m_{\xi}^{\widetilde{d}}(K(\xi,\widetilde{d}))=1$. The measure $m_{\xi}^{\widetilde{d}}$ can be extended to $[0,1]$ by setting, for any $B\in \mathcal{B}([0,1])$, $m_{\xi}^{\widetilde{d}}(B):=m_{\xi}^{\widetilde{d}}(B\cap K(\xi,\widetilde{d}))$.
		
		\item[Step 4] Fix a sequence $\widetilde{d}=\{d_i\}_{i\in \mathbb{N}}\in \prod_{i\in\mathbb{N}}\mathcal{D}_i$ such that $$\lim_{i\to \infty} d_i=d,\ \lim_{i\to \infty } \mathcal{T}^*(d_i)=\mathcal{T}^*(d).$$
		Define $K^d(\xi)=K(\xi,\widetilde{d})$, and $m^d_\xi=m_{\xi}^{\widetilde{d}}$.
		
		From the construction, we claim that: $K^d(\xi)\subset S(\omega,d,\xi,\widetilde{\epsilon})$.
		In fact we just need to prove the following lemma: 
		\begin{lemma}\label{lemma converge to d}
			For any $w\in G(d_1,d_2,\cdots, d_i)$ with $n=|w|$, for any $v\in G^w(i+1,d_{i+1},k)$ with $n^{i+1}_{k}\geq\frac{n}{(\varepsilon_{j+1})^2}$, for any $\vl\in [w\ast v]_{\omega}$, we have:
			$$\left|\frac{S_{n+M_{i+1}+n^{i+1}_{k}}\Psi(\omega,\vl)}{S_{n+M_{i+1}+n^{i+1}_{k}}\Phi(\omega,\vl)}-d\right|\leq \epsilon'_{i+1},$$
			where $\epsilon'_{i+1}=|d-d_{i+1}|+2\varepsilon_{i+1}$.
		\end{lemma}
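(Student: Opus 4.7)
The plan is to decompose, for $\Upsilon\in\{\Psi,\Phi\}$, the Birkhoff sum $S_N\Upsilon(\omega,\vl)$ with $N=n+M_{i+1}+n^{i+1}_k$ as
\begin{equation*}
S_N\Upsilon(\omega,\vl)=\underbrace{S_{n+M_{i+1}}\Upsilon(\omega,\vl)}_{=:A_\Upsilon}+\underbrace{S_{n^{i+1}_k}\Upsilon(F^{n+M_{i+1}}(\omega,\vl))}_{=:B_\Upsilon},
\end{equation*}
to show that the head $A_\Upsilon$ is negligible compared with the tail $B_\Upsilon$, and then to replace $B_\Upsilon$ by its analogue $\widetilde{B}_\Upsilon:=S_{n^{i+1}_k}\Upsilon_{j_{i+1}}(F^{n+M_{i+1}}(\omega,\vl))$ built from the H\"older approximation $\Upsilon_{j_{i+1}}$, for which the ratio estimate \eqref{control psi/Phi} applies directly.

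The head is controlled as follows. Birkhoff's theorem applied to $\omega\mapsto\|\Upsilon(\omega)\|_\infty$ gives $|S_n\Upsilon(\omega,\vl)|\leq nC$ (since $n$ was chosen large enough in step~1), while \eqref{Control S-n upper} applied to $\sigma^n\omega\in\Omega_{i+1}$ yields $|S_{M_{i+1}}\Upsilon(F^n(\omega,\vl))|\leq M_{i+1}C$. The hypothesis $n^{i+1}_k\geq n/(\varepsilon_{i+1})^2$ combined with \eqref{control M} (which gives $M_{i+1}\leq(\varepsilon_{i+1})^4 n^{i+1}_k$) then implies $|A_\Upsilon|\leq 2C(\varepsilon_{i+1})^2 n^{i+1}_k$, while \eqref{control Upsilon small} applied to $\sigma^n\omega\in\Omega_{i+1}$ with $p=n^{i+1}_k$ gives $|B_\Upsilon|\geq n^{i+1}_k c_\Upsilon/2$. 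Thus $A_\Upsilon$ is a $O((\varepsilon_{i+1})^2)$-perturbation of $B_\Upsilon$. Since $\Upsilon_{j_{i+1}}$ is obtained by conditioning on cylinders of depth $j_{i+1}$, one has $|\Upsilon-\Upsilon_{j_{i+1}}|\leq \var_{j_{i+1}}\Upsilon$ pointwise, whence
\begin{equation*}
|B_\Upsilon-\widetilde{B}_\Upsilon|\leq S_{n^{i+1}_k}(\var_{j_{i+1}}\Upsilon)(\sigma^{n+M_{i+1}}\omega)\leq 2n^{i+1}_k(\varepsilon_{i+1})^4,
\end{equation*}
by \eqref{control Sp-var-ji} together with property~(3) in the choice of $j_{i+1}$ (i.e., $\int\var_{j_{i+1}}\Upsilon\, d\mathbb P\leq (\varepsilon_{i+1})^4$).

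Since $v\in G^w(i+1,d_{i+1},k)$ has been produced from a point $y_l\in E(\omega,i+1,w,q_{i+1})$, its preimage $y_l'$ under the natural identification belongs to $E_{i+1,q_{i+1}}(\sigma^{n+M_{i+1}}\omega)$ and $v$ itself plays the role of the cylinder word in \eqref{control psi/Phi}, giving
\begin{equation*}
\left|\frac{\widetilde{B}_\Psi}{\widetilde{B}_\Phi}-d_{i+1}\right|\leq (\varepsilon_{i+1})^2.
\end{equation*}
The three estimates are combined via
\begin{equation*}
\left|\frac{S_N\Psi}{S_N\Phi}-\frac{\widetilde{B}_\Psi}{\widetilde{B}_\Phi}\right|\leq \frac{|S_N\Psi-\widetilde{B}_\Psi|}{|S_N\Phi|}+\frac{|\widetilde{B}_\Psi|}{|\widetilde{B}_\Phi|}\cdot\frac{|S_N\Phi-\widetilde{B}_\Phi|}{|S_N\Phi|},
\end{equation*}
where the numerators are $O((\varepsilon_{i+1})^2 n^{i+1}_k)$, the denominators are at least $n^{i+1}_k c_\Phi/3$, and $|\widetilde{B}_\Psi|/|\widetilde{B}_\Phi|\leq d_{i+1}+1$. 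This yields an upper bound $K(\varepsilon_{i+1})^2$ for a constant $K=K(C,c_\Phi,d_{i+1})$; the smallness assumptions \eqref{condeps}, in particular the factors involving $C^2$ and $c_\Phi/(20(d_{i+1}+1))$, are precisely calibrated so that $(K+1)(\varepsilon_{i+1})^2\leq 2\varepsilon_{i+1}$. Adding $|d-d_{i+1}|$ via the triangle inequality produces the claimed bound $\epsilon'_{i+1}=|d-d_{i+1}|+2\varepsilon_{i+1}$.

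The main obstacle is the bookkeeping: carefully tracking the error terms through the decomposition, verifying that the constants in the various error bounds match the smallness conditions on $\varepsilon_{i+1}$ listed in \eqref{condeps}, and ensuring that the relative sizes of $n$, $M_{i+1}$ and $n^{i+1}_k$ yield the required control. No conceptually new ingredient is required beyond the three reductions already described.
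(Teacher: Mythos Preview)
Your proposal is correct and follows essentially the same approach as the paper's proof. Both decompose $S_N\Upsilon$ into the head over the first $n+M_{i+1}$ iterates and the tail $S_{n^{i+1}_k}\Upsilon\circ F^{n+M_{i+1}}$, bound the head by $O((\varepsilon_{i+1})^2 n^{i+1}_k)$ via $n\le (\varepsilon_{i+1})^2 n^{i+1}_k$ and \eqref{control M}, replace the tail by its H\"older approximation using \eqref{control Sp-var-ji}, and invoke \eqref{control psi/Phi}; the only cosmetic difference is that the paper writes the final triangle inequality as $\big|\tfrac{S_N\Psi}{S_N\Phi}-d_{i+1}\big|\le \tfrac{|S_N\Psi-\widetilde B_\Psi|}{|S_N\Phi|}+\tfrac{|\widetilde B_\Psi-d_{i+1}\widetilde B_\Phi|}{|S_N\Phi|}+d_{i+1}\tfrac{|\widetilde B_\Phi-S_N\Phi|}{|S_N\Phi|}$ rather than going through $\widetilde B_\Psi/\widetilde B_\Phi$ first, which is an equivalent rearrangement.
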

		\begin{proof}
			First, for any $\Upsilon\in\{\Phi,\Psi\}$,
			\begin{eqnarray*}
				&&\left|S_{n+M_{i+1}+n^{i+1}_{k}}\Upsilon(\omega,\vl)-S_{n^{i+1}_{k}}\Upsilon_{j_{i+1}}(F^{n+M_{i+1}}(\omega,\vl))\right|\\
				&\leq&2(n+M_{i+1})C+|S_{n^{i+1}_{k}}(\Upsilon-\Upsilon_{j_{i+1}})(F^{n+M_{i+1}}(\omega,\vl))|\\
				&\leq& 2(n+M_{i+1})C+S_{n^{i+1}_{k}}\mathrm{var}_{j_{i+1}}\Upsilon(\sigma^{n+M_{i+1}}\omega)\\
				&\leq&4n^{i+1}_{k}(\varepsilon_{j+1})^2+2n^{i+1}_{k}(\varepsilon_{j+1})^4\ (\text{ see } \eqref{control Sp-var-ji} \text{ and } \int_{\Omega} \var_{j_{i+1}}\Upsilon\ d\mathbb{P}\le (\varepsilon_{i+1})^4)\\        	 
				&\leq&5n^{i+1}_{k}(\varepsilon_{j+1})^2.
			\end{eqnarray*}
			Next, applying again the shorthand $\widetilde F^n=F^{n+M_{i+1}}$, 
			\begin{eqnarray*}
				&&\left|\frac{S_{n+M_{i+1}+n^{i+1}_{k}}\Psi(\omega,\vl)}{S_{n+M_{i+1}+n^{i+1}_{k}}\Phi(\omega,\vl)}-d\right|\\
				&\leq&\left|\frac{S_{n+M_{i+1}+n^{i+1}_{k}}\Psi(\omega,\vl)}{S_{n+M_{i+1}+n^{i+1}_{k}}\Phi(\omega,\vl)}-d_{i+1}\right|+|d-d_{i+1}|\\
				&\leq&\left|\frac{S_{n+M_{i+1}+n^{i+1}_{k}}\Psi(\omega,\vl)-S_{n^{i+1}_{k}}\Psi_{j_{i+1}}(\widetilde F^n(\omega,\vl))}{S_{n+M_{i+1}+n^{i+1}_{k}}\Phi(\omega,\vl)}\right|\\
				&&+\left|\frac{S_{n^{i+1}_{k}}\Psi_{j_{i+1}}(\widetilde F^n(\omega,\vl))-d_{i+1}S_{n^{i+1}_{k}}\Phi_{j_{i+1}}(\widetilde F^n(\omega,\vl))}{S_{n+M_{i+1}+n^{i+1}_{k}}\Phi(\omega,\vl)}\right|\\
				&&+d_{i+1}\left|\frac{S_{n^{i+1}_{k}}\Phi_{j_{i+1}}(\widetilde F^n(\omega,\vl))-S_{n+M_{i+1}+n^{i+1}_{k}}\Phi(\omega,\vl)}{S_{n+M_{i+1}+n^{i+1}_{k}}\Phi(\omega,\vl)}\right|\\
				&&+|d-d_{i+1}|\\
				&\leq&\left|\frac{(\varepsilon_{j+1})^2 S_{n^{i+1}_{k}}\Phi_{j_{i+1}}(\widetilde F^n(\omega,\vl))}{S_{n+M_{i+1}+n^{i+1}_{k}}\Phi(\omega,\vl)}\right|+\left|\frac{5(d_{i+1}+1)n^{i+1}_{k}(\varepsilon_{j+1})^2}{S_{n+M_{i+1}+n^{i+1}_{k}}\Phi(\omega,\vl)}\right|+|d-d_{i+1}|,
			\end{eqnarray*}	
			where we have used \eqref{control psi/Phi} and \eqref{control Sp-var-ji}. Thus, 
			\begin{eqnarray*}     
				&&\left|\frac{S_{n+M_{i+1}+n^{i+1}_{k}}\Psi(\omega,\vl)}{S_{n+M_{i+1}+n^{i+1}_{k}}\Phi(\omega,\vl)}-d\right|\\  &\leq&(\varepsilon_{j+1})^2+\left|\frac{5n^{i+1}_{k}(\varepsilon_{j+1})^4+5(d_{i+1}+1)n^{i+1}_{k}(\varepsilon_{j+1})^2}{\frac{(n+M_{i+1}+n^{i+1}_{k})c_{\Phi}}{2}}\right|+|d-d_{i+1}|\\
				&\leq&(\varepsilon_{j+1})^2+\frac{10(\varepsilon_{j+1})^4+10(d_{i+1}+1)(\varepsilon_{j+1})^2}{c_{\Phi}}+|d-d_{i+1}|\\
				&\leq&(\varepsilon_{j+1})^2+\varepsilon_{j+1}+|d-d_{i+1}|\leq\epsilon'_{i+1}.
			\end{eqnarray*}	
		\end{proof}

		Now we turn to  estimate the lower Hausdorff dimension of $m^d_\xi$. If $\mathcal T^*(d)=0$, there is nothing need to prove. So we assume that $\mathcal T^*(d)>0$.
		
		For any $J\in G(d_1,d_2,\cdots,d_i)$, define $g(J)=i$.	
		Let us fix $B$ a subinterval  of $[0,1]$ of length smaller than that of every element in $G(d_1)$, and assume that $B\cap K_{\xi}^{\widetilde{d}}\neq \emptyset$. Let $L=\overline{I_{\omega}^w}$ be the element of largest diameter in $\bigcup_{i\geq 1} G(d_1\cdots d_i)$ such that $B$ intersects at least two elements of $G(d_1\cdots d_{g(L)+1})$ and is included in $L\in G_{d_1\cdots d_{g(L)}}$. We remark that this implies that $B$ does not intersect any other element of $G(d_1,d_2,\cdots,d_i)$, where $i=g(L)$, and as a consequence $m^d_\xi(B)\leq m^d_\xi(L)$.
		
		\medskip
		
		Let us distinguish three cases:

		$\bullet$ $|B|\geq |L|$: then
		\begin{equation}\label{control B>L}
		m^d_\xi(B)\leq m^d_\xi(L)\leq |L|^{\frac{(\mathcal{T}^{*}(d_{i})-\varepsilon_{s})}{\xi}}\leq |B|^{\frac{(\mathcal{T}^{*}(d_{i})-\varepsilon_{i})}{\xi}}.
		\end{equation}
		
		$\bullet$  $|B|\leq \frac{1}{4}|L|\exp(-|w|(\varepsilon_{i+1})^2)$.
		Assume $L_1,\dots,L_p$ are the elements of $G_{i+1}$ which have non-empty intersection with $B$.
		From property \ref{iv}, we can choose $q_{i+1}\in\mathcal{Q}_{i+1}$ so that $\mathcal{T}_{j_{i+1}}'(q_{i+1})=d_{i+1}$, and  get
		\begin{equation*}
		m^d_\xi(B)=\sum_{l=1}^p m^d_\xi(B\cap L_l)\leq 4\Gamma_1 m^d_\xi(L)\sum_{i=1}^p \zeta_{\omega,w, q_{i+1}}(\widehat{L_l}).
		\end{equation*}
		
		From property \ref{i} we can also deduce that $\max\{|\widehat{L_l}|:1\leq l\leq p\}\leq 3|B|$. From property \ref{ii} we can get $E(\omega,i+1,w,q_{i+1})\cap\widehat{L_l}\neq\emptyset$. If $y$ is taken in the intersection, we have $B(y,4|B|)\supset(\bigcup_{l=1}^p\widehat{L_l}) $.
		
		Now we notice that $L$ is the closure of $I_{\omega}^{w}$ for some $w\in\Sigma_{\omega,n}$ with $n\in\N$, and we have  $\sigma^n\omega\in {\Omega}_{i+1}$. Using \eqref{control zeta B(y,r)} in lemma \ref{lemma control zeta B(y,r)}, we can get:  
		$$\zeta_{\omega,w,q_{i+1}}(B(y,4|B|))\leq \left(\frac{4|B|}{|I_{\omega}^{w}|}\right )^{\mathcal{T}^*(d_{i+1})-\frac{\varepsilon_{i+1}}{2}}.$$

		Now, since $L=\overline{I_{\omega}^{w}}$ is the closure of $I_{\omega}^{w}$, we have:
		\begin{equation*}\label{}
		\begin{split}
		m^d_\xi(B) \leq& 4\Gamma_1 m^d_\xi(L)\sum_{l=1}^p \zeta_{\omega,v,q_{i+1}}(\widehat{L_l})  \\
		\leq&  4\Gamma_1 m^d_\xi(L)\zeta_{\omega,w,q_{i+1}}(B(y,4|B|))\\
		\leq&  4\Gamma_1|L|^{\frac{\mathcal{T}^{*}(d_{i})-\varepsilon_{i}}{\xi}}\left(\frac{4|B|}{|L|}\right)^{\mathcal{T}^*(d_{i+1})-\frac{\varepsilon_{i+1}}{2}}
		\leq 4\Gamma_1(4|B|)^{\frac{\mathcal{T}^{*}(d_{i})-\varepsilon_{i}}{\xi}} \left(\frac{4|B|}{|L|}\right)^{\alpha_i},
		\end{split}
		\end{equation*}
		where $\alpha_i=\mathcal{T}^*(d_{i+1})-\frac{\varepsilon_{i+1}}{2}-\frac{\mathcal{T}^{*}(d_{i})-\varepsilon_{i}}{\xi}$ is positive for $i$ large enough since $\lim_{i\to\infty}\mathcal{T}^{*}(d_{i})=\mathcal T^*(d)>0$. Moreover, $4|B|/|L|\le 1$, so
		\begin{equation}\label{control B<<L}
		m^d_\xi(B) \leq 4\Gamma_1(4|B|)^{\frac{\mathcal{T}^{*}(d_{i})-\varepsilon_{i}}{\xi}}.
		\end{equation}
		
		$\bullet$  $\frac{1}{4}|L|\exp(-|w|(\varepsilon_{i+1})^2)\leq |B|\leq |L|$:
		
		We need at most $M(B)=\lfloor4\exp(|w|(\varepsilon_{i+1})^2)\rfloor+1$ contiguous intervals $(B(k))_{1\leq k \leq M(B)}$ with diameter $\frac{1}{4}|L|\exp(-|w|(\varepsilon_{i+1})^2)$ to cover $B$. For these intervals we have the estimate above. Consequently,
		
		\begin{equation*}\label{}
		\begin{split}
		m^d_\xi(B) \leq& \sum_{k=1}^{M(B)}4\Gamma_1(4|B(k)|)^{\frac{\mathcal{T}^{*}(d_{i})-\varepsilon_{i}}{\xi}}\\
		\leq&  4 \Gamma_1 M(B) (4|B|)^{\frac{\mathcal{T}^{*}(d_{i})-\varepsilon_{i}}{\xi}} \\
		\leq& 20\Gamma_1\exp(|w|(\varepsilon_{i+1})^2)(4|B|)^{\frac{\mathcal{T}^{*}(d_{i})-\varepsilon_{i}}{\xi}} .\\
		\end{split}
		\end{equation*}
		Since $|B|\leq |L|\leq \exp(-\frac{n c_{\Phi}}{2})$, we get $\exp(|w|(\varepsilon_{i+1})^2)\leq |L|^{-\frac{2(\varepsilon_{i+1})^2}{c_{\Phi}}}\leq |B|^{-\frac{2(\varepsilon_{i+1})^2}{c_{\Phi}}}.$
		Finally,			\begin{equation}\label{control B<L}
		m^d_\xi(B)\leq 20\Gamma_1(4|B|)^{\frac{\mathcal{T}^{*}(d_{i})-\varepsilon_{i}}{\xi}}|B|^{-\frac{2(\varepsilon_{i+1})^2}{c_{\Phi}}}.
		\end{equation} 
		
		It follows from the estimations \eqref{control B>L},\eqref{control B<<L} and \eqref{control B<L} that 			$\underline \dim_H (m^d_\xi)\geq\frac{\mathcal{T}^{*}(d)}{\xi}$.
		
	\end{description}
	
	We have finished the proof of theorem \ref{ubiqu}.
\end{proof}

\section{Conclusion on the lower bound for the lower Hausdorff spectrum}
Next proposition is both a complement to proposition~\ref{lower bound-1}, and an improvement over the interval $[\mathcal T'(+\infty),\mathcal{T}'(t_0-))$.
\begin{proposition}\label{lower bound-2}
	For $\mathbb P$-a.e. $\omega$, for any $d\in [0,\mathcal{T}'(t_0-)]$, one has
	$$\dim_H(\El(\nu_{\omega},d))\geq t_0 d=(\dim_H X_{\omega})\cdot d .$$
\end{proposition}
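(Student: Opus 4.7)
The plan is to use the conditioned ubiquity theorem~\ref{ubiqu} to construct, for each target exponent $d \in (0, \mathcal{T}'(t_0-)]$, a Borel probability measure concentrated on $\El(\nu_\omega,d)$ whose Hausdorff dimension equals $t_0 d$ (the case $d=0$ is trivial). Set $d^* = \mathcal{T}'(t_0-)$ and $\xi = d^*/d \geq 1$. Since $\mathcal{T}(t_0)=0$ and, by concavity of $\mathcal{T}$, the supremum in $\mathcal{T}^*(d^*) = \sup_q(d^* q - \mathcal{T}(q))$ is attained at $q=t_0$ (as $d^*$ lies in the superdifferential of $\mathcal{T}$ at $t_0$), we get $\mathcal{T}^*(d^*) = d^* t_0$. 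Because $\mathcal{T}'$ is non-increasing, $d^* \in [\mathcal{T}'(+\infty),\mathcal{T}'(-\infty)]$, so theorem~\ref{ubiqu} produces a set $K^{d^*}(\xi)$ and a Borel probability measure $m := m^{d^*}_\xi$ supported on $K^{d^*}(\xi)$ with $\dim_H m \geq \mathcal{T}^*(d^*)/\xi = t_0 d$.

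By remark~\ref{S to El}, every point of $K^{d^*}(\xi) \subset S(\omega, d^*, \xi, \widetilde\epsilon)$ satisfies $\dl(\nu_\omega,\cdot) \leq d^*/\xi = d$, giving the inclusion $K^{d^*}(\xi) \subset \bigcup_{h \leq d}\El(\nu_\omega,h)$. To promote this to $m$-a.e.\ equality $\dl(\nu_\omega,x) = d$, I would invoke corollary~\ref{control Fh} combined with the monotonicity of $d\mapsto F(\omega,d)$ which is manifest from definition~\ref{def710}: if $h \leq d'$ then $F(\omega,h) \subset F(\omega,d')$. For each $\eta > 0$ the sub-level set $A_\eta := \{x : \dl(\nu_\omega,x) \leq d-\eta\}$ then decomposes as
\[
A_\eta \setminus \Xi_\omega \;=\; \bigcup_{h \leq d-\eta}\bigl(\El(\nu_\omega,h)\setminus \Xi_\omega\bigr) \;\subset\; \bigcup_{h \leq d-\eta} F(\omega,h) \;=\; F(\omega, d-\eta),
\]
by proposition~\ref{prop8.3}, so $\dim_H A_\eta \leq (d-\eta) t_0$ by corollary~\ref{control Fh} (the countable set $\Xi_\omega$ contributing zero).

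Now the introduction's characterization $\dim_H m = \inf\{\dim_H E : m(E) > 0\}$ together with $\dim_H m \geq t_0 d > (d-\eta) t_0 \geq \dim_H A_\eta$ forces $m(A_\eta) = 0$ for every $\eta > 0$. Taking $\eta_n \downarrow 0$ yields $m(\{x : \dl(\nu_\omega, x) \geq d\}) = 1$, and combining with the upper bound $\dl(\nu_\omega,\cdot) \leq d$ valid on $\mathrm{supp}(m) \subset K^{d^*}(\xi)$ shows that $m$ is concentrated on $\El(\nu_\omega,d)$. Hence $\dim_H \El(\nu_\omega,d) \geq \dim_H m \geq t_0 d$, as desired.

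The heavy lifting, namely the conditioned ubiquity construction, is already in hand via theorem~\ref{ubiqu}; the remaining argument is a soft dimension comparison. The key observation making everything fit is that the upper bound of corollary~\ref{control Fh} is \emph{linear} in $d$, producing the strict gap $(d-\eta) t_0 < t_0 d$ which forces the mass of $m$ to concentrate precisely on the target level set $\El(\nu_\omega,d)$ rather than leak onto its sub-level sets. Aside from this clean dimension gap, the only delicate point is the monotonicity of the approximation sets $F(\omega,\cdot)$, which fortunately reads directly off definition~\ref{def710}.
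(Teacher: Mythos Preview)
Your proof is correct and follows essentially the same route as the paper: apply theorem~\ref{ubiqu} at the anchor exponent $d^{*}=\mathcal{T}'(t_0-)$ with $\xi=d^{*}/d$, then use proposition~\ref{prop8.3} and corollary~\ref{control Fh} together with the monotonicity of $F(\omega,\cdot)$ to show the sub-level sets carry no $m$-mass, exactly as the paper does. One small slip: in this paper the Legendre transform is defined as an $\inf$, not a $\sup$, so your displayed formula $\mathcal{T}^*(d^*)=\sup_q(d^*q-\mathcal{T}(q))$ is mis-stated (that supremum is $+\infty$ for concave $\mathcal{T}$), but your conclusion $\mathcal{T}^*(d^*)=t_0 d^*$ is nonetheless correct since the infimum is attained at $q=t_0$ and $\mathcal{T}(t_0)=0$.
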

\begin{proof}
	%
	If $d\in (0,\mathcal{T}'(t_0-)]$, we write $d=\mathcal{T}'(t_0-)/\xi$ with $\xi\geq 1$. We can find a suitable sequence $\widetilde{\varepsilon} $ such that theorem \ref{ubiqu} and remark~\ref{S to El} hold. This provides us with a positive Borel measure $m^{\mathcal{T}'(t_0-)}_{\xi}$ on $K^{\mathcal{T}'(t_0-)}(\xi)$, with the following properties:
	\begin{itemize}
		\item $m^{\mathcal{T}'(t_0-)}_{\xi}(K^{\mathcal{T}'(t_0-)}(\xi))=1$ and $\dim_H (m^{\mathcal{T}'(t_0-)}_{\xi})\geq \frac{\mathcal{T}^*(\mathcal{T}'(t_0-)) }{\xi}=d\, t_0$.
		\item $m^{\mathcal{T}'(t_0-)}_{\xi}(E)=0$ as soon as $\dim_H E< d\, t_0$.
		\item For any $x\in K^{\mathcal{T}'(t_0-)}(\xi)$, we have that $\dl(\vw,x)\leq d$.
	\end{itemize}
	It follows from lemma \ref{prop8.3} that
	$$(K^{\mathcal{T}'(t_0-)}(\xi)\setminus (\bigcup_{0\leq h<d}F(h)))\subset (\El(\vw,d)\cup \Xi_\omega). $$
	Also, corollary \ref{control Fh} tells $\dim_H F(h)\leq ht_0  < d t_0$ for all $0\leq h< d$, so $m^{\mathcal{T}'(t_0-)}_{\xi}(F(h))=0$ for all $0\leq h< d$. Moreover, the family of sets $(F(h))_{0< h< d}$ is nondecreasing. Thus, we have
	$$m^{\mathcal{T}'(t_0-)}_{\xi}(\El(\vw,d)\cup \Xi_\omega)>0,$$
	hence
	$$\dim_H (\El(\vw,d)\cup \Xi_\omega)\geq dt_0.$$
	Finally, $\dim_H \El(\vw,d)\geq d t_0$ since $\Xi_\omega$ is a countable set.
	
	If $d=0$ or $t_0=0$, we have $$\emptyset\neq\Xi'_\omega\subset \El(\vw,0),$$
	thus $\dim_H \El(\vw,d)\geq d t_0$ for $d=0$.
\end{proof}

Next proposition collects all the information required to conclude regarding the lower bound for the lower Hausdorff spectrum. Its claim (iii) is the desired sharp lower bound.

\begin{proposition}\label{lower bound-3}For $\mathbb P$-a.e. $\omega$:
	\begin{enumerate}
		\item if $d\in [0,\mathcal{T}'(t_0-)]$, then $\dim_H(\El(\nu_{\omega},d))\geq t_0 d$,
		\item if $d\in [\mathcal{T}'(+\infty),\mathcal{T}'(-\infty)]$, then $\dim_H(\El(\nu_{\omega},d))\geq \mathcal{T}^*(d)$,
		\item for any $d\in [0,\mathcal{T}'(-\infty)]$, $\dim_H(\El(\nu_{\omega},d))\geq \widetilde{\mathcal{T}}^{*}(d)$.
	\end{enumerate}
\end{proposition}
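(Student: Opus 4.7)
The plan is to observe that claim (i) is exactly Proposition~\ref{lower bound-2} and claim (ii) is exactly Proposition~\ref{lower bound-1}, so the only genuine task is to deduce (iii) by computing $\widetilde{\mathcal{T}}^*$ explicitly and splicing the two bounds together on complementary subintervals of $[0,\mathcal{T}'(-\infty)]$.

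First I would unravel $\widetilde{\mathcal{T}}=\min(\mathcal{T},0)$. Since $\mathcal{T}$ is concave and non-decreasing with $\mathcal{T}(t_0)=0$, one has $\widetilde{\mathcal{T}}(q)=\mathcal{T}(q)$ for $q\le t_0$ and $\widetilde{\mathcal{T}}(q)=0$ for $q\ge t_0$. For $d\ge 0$ this immediately gives
$$\widetilde{\mathcal{T}}^*(d)=\min\Big(\inf_{q\ge t_0}dq,\ \inf_{q\le t_0}(dq-\mathcal{T}(q))\Big)=\min\Big(dt_0,\ \inf_{q\le t_0}(dq-\mathcal{T}(q))\Big).$$
Next I would split at the slope $\mathcal{T}'(t_0-)$. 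For $d\in[0,\mathcal{T}'(t_0-)]$ the concave function $q\mapsto dq-\mathcal{T}(q)$ on $(-\infty,t_0]$ has left derivative $d-\mathcal{T}'(t_0-)\le 0$ at $t_0$, so it is non-increasing up to $t_0$; its infimum on that half-line is therefore attained at $t_0$ and equals $dt_0$, whence $\widetilde{\mathcal{T}}^*(d)=dt_0$. Part (i), which has already been established, then yields $\dim_H\underline{E}(\nu_\omega,d)\ge t_0 d=\widetilde{\mathcal{T}}^*(d)$. For $d\in[\mathcal{T}'(t_0-),\mathcal{T}'(-\infty)]$, the concavity of $\mathcal{T}$ forces the (unrestricted) infimum defining $\mathcal{T}^*(d)$ to be realized at some $q_\ast\le t_0$, so $\inf_{q\le t_0}(dq-\mathcal{T}(q))=\mathcal{T}^*(d)\le dt_0$ and therefore $\widetilde{\mathcal{T}}^*(d)=\mathcal{T}^*(d)$. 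Since this range is contained in $[\mathcal{T}'(+\infty),\mathcal{T}'(-\infty)]$, part (ii) applies and gives $\dim_H\underline{E}(\nu_\omega,d)\ge\mathcal{T}^*(d)=\widetilde{\mathcal{T}}^*(d)$.

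I do not expect a real obstacle, as everything reduces to the elementary Legendre-transform computation above together with the two previously established bounds. The only points worth double-checking are the consistency at the junction $d=\mathcal{T}'(t_0-)$, where both formulae agree since $\mathcal{T}^*(\mathcal{T}'(t_0-))=\mathcal{T}'(t_0-)\,t_0-\mathcal{T}(t_0)=\mathcal{T}'(t_0-)\,t_0$, and the fact that $[0,\mathcal{T}'(-\infty)]$ is indeed covered by $[0,\mathcal{T}'(t_0-)]\cup[\mathcal{T}'(t_0-),\mathcal{T}'(-\infty)]$, which is clear because $\mathcal{T}'(t_0-)\le\mathcal{T}'(-\infty)$ and $\mathcal{T}'(t_0-)\ge 0$ (since $\mathcal{T}$ is non-decreasing). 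This completes the deduction of (iii) from (i) and (ii).
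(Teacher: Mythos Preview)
Your proposal is correct and follows essentially the same approach as the paper: both note that (i) and (ii) are exactly Propositions~\ref{lower bound-2} and~\ref{lower bound-1}, and both deduce (iii) by computing $\widetilde{\mathcal{T}}^*$ piecewise as $dt_0$ on $[0,\mathcal{T}'(t_0-)]$ and $\mathcal{T}^*(d)$ on $[\mathcal{T}'(t_0-),\mathcal{T}'(-\infty)]$, then invoking (i) and (ii) on these respective ranges. Your write-up simply supplies more detail for the Legendre-transform computation than the paper does.
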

\begin{proof}
	(i) and (ii) come from proposition~\ref{lower bound-2} and proposition~\ref{lower bound-1}.
	
	To prove (iii), since $\widetilde{\mathcal{T}}(q)=\min\{\mathcal{T}(q),0\}$,$\mathcal{T}(t_0)=0$ and $\mathcal{T}$ is increasing,
	$$\widetilde{\mathcal{T}}^{*}(d)=\inf_{q\in\mathbb{R}}\{qd-\widetilde{\mathcal{T}}(q)\}=\left\{
	\begin{array}{ll}
	t_0 d, & d\in [0,\mathcal{T}'(t_0-)], \\
	\mathcal{T}^*(d), & d\in [\mathcal{T}'(t_0-),\mathcal{T}'(-\infty)].
	\end{array}
	\right.
	$$		
\end{proof}

\section{Hausdorff dimensions of the level sets $E(\nu_{\omega},d)$ and $\overline E(\nu_{\omega},d)$}

Recall that $v(\omega,n,x)$ has been defined in definition~\ref{def 4.1}. We need to introduce another approximation rate.

For any $x\in [0,1]\setminus\{x_{\omega}^{vs}:v\in\Sigma_{\omega,\ast},s\in S'(\omega,v,1)\} $, define
$$\widehat\xi(\omega,n,x)=\frac{\log(\inf\{|x-x_{\omega}^{vs}|:|v|\leq n, s\in S'(\omega,v,1)\})}{\log |I_{\omega}^{v(\omega,n,x)}|}$$
and then
$$\widehat\xi(\omega,x)=\liminf_{n\to \infty}\widehat\xi(\omega,n,x).$$
The desired conclusion on the sets $E(\nu_{\omega},d)$ and $\overline E(\nu_{\omega},d)$ will follow from  two lemmas and one proposition.
\begin{lemma}\label{lem AR=1}
	For $\mathbb P$-a.e. $\omega\in \Omega$, we have
	$$\{x\in [0,1]\setminus\Xi_\omega: \widehat\xi(\omega,x)>1\}=\emptyset.$$
	In other words, for any $x\in [0,1]$, if $x\notin\Xi_\omega$, then  $\widehat\xi(\omega,x)=1.$
\end{lemma}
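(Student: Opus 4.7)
\medskip

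\noindent\textbf{Plan of proof.}

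The inequality $\widehat\xi(\omega,x)\ge 1$ is immediate and deterministic. Setting $v^{(n)}=v(\omega,n,x)$ and $\delta_n=\inf\{|x-x_\omega^{vs}|:|v|\le n,\ s\in S'(\omega,v,1)\}$, I would observe that, apart from $0$ and $1$, every endpoint of the interval $I_\omega^{v^{(n)}}$ belongs to the set $\{x_\omega^{vs}:|v|\le n-1,\ s\in S'(\omega,v,1)\}$. Indeed, an interior endpoint of $I_\omega^{v^{(n)}}$ is $F_{\mu_\omega}(M_\omega^{u})$ for some $u=v's$ with $|v'|<n$ and $s\in S'(\omega,v',1)$ (one traces the nested partition back to the coarsest level at which the endpoint first appears as a separator between two siblings). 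Hence $\delta_n\le|I_\omega^{v^{(n)}}|$, and since both logarithms are negative, the quotient $\widehat\xi(\omega,n,x)$ is at least $1$ for every $n$.

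For the matching upper bound I would sharpen this by going one level deeper: $x\in I_\omega^{v^{(n+1)}}\subset I_\omega^{v^{(n)}}$, and by the same argument the endpoints of $I_\omega^{v^{(n+1)}}$ are (generically) in our atom set with index $|v|\le n$. This yields the refined estimate
\[
\delta_n\ \le\ |I_\omega^{v^{(n+1)}}|.
\]
The trivial sub-partition case $|S(\omega,v^{(n)},1)|=1$ is harmless since then $|I_\omega^{v^{(n+1)}}|=|I_\omega^{v^{(n)}}|$ and one recovers the trivial bound, which already forces $\widehat\xi(\omega,n,x)\le 1$; the configurations where $x$ sits near $0$ or $1$ are handled symmetrically via the other endpoint of $I_\omega^{v^{(n+1)}}$ (or by the adjacent sub-interval if $0$ or $1$ is not itself an atom).

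Finally I would compare $|I_\omega^{v^{(n+1)}}|$ with $|I_\omega^{v^{(n)}}|$ via proposition~\ref{for n}: for any $\underline v\in[v^{(n+1)}]_\omega$,
\[
\log|I_\omega^{v^{(n+1)}}|-\log|I_\omega^{v^{(n)}}|=\Phi(F^{n}(\omega,\underline v))+o(n),
\]
where the $o(n)$ comes from the $\epsilon(\Phi,\omega,\cdot)$ terms. Since $\int_\Omega\|\Phi(\omega)\|_\infty\,d\mathbb P=C_\Phi<\infty$, the classical consequence of Birkhoff's theorem for stationary integrable sequences gives $\|\Phi(\sigma^n\omega)\|_\infty=o(n)$ $\mathbb P$-a.s., so the left-hand side is $o(n)$. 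On the other hand, by assumption~\eqref{cphi} and the ergodic theorem, $|\log|I_\omega^{v^{(n)}}||\ge \frac{c_\Phi}{2}n$ for $n$ large enough. Dividing through, and using that the denominator $\log|I_\omega^{v^{(n)}}|$ is negative,
\[
\widehat\xi(\omega,n,x)\ \le\ \frac{\log|I_\omega^{v^{(n+1)}}|}{\log|I_\omega^{v^{(n)}}|}\ =\ 1\ -\ \frac{o(n)}{\log|I_\omega^{v^{(n)}}|}\ =\ 1+o(1).
\]
Combined with $\widehat\xi(\omega,n,x)\ge 1$, this shows $\widehat\xi(\omega,n,x)\to 1$, hence $\widehat\xi(\omega,x)=1$.

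The main obstacle is bookkeeping rather than anything deep: one must verify rigorously that an endpoint of $I_\omega^{v^{(n+1)}}$ really is of the form $x_\omega^{vs}$ with $|v|\le n$ and $s\in S'(\omega,v,1)$. This requires tracing a potentially long chain of ``rightmost/leftmost sibling'' relations across several levels, and then arguing that the two remaining configurations (where the chain reaches the extremity $0$ or $1$) are harmless because one may instead use the opposite endpoint or the next interior boundary within $I_\omega^{v^{(n+1)}}$, which loses at most an $o(n)$ factor in $\log\delta_n$ and is therefore absorbed in the same $1+o(1)$ estimate.
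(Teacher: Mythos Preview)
Your argument is correct and takes a different route from the paper's. The paper proceeds by contradiction: assuming $\widehat\xi(\omega,x)>1+1/k$, one has $\delta_n\le|I_\omega^{v(\omega,n,x)}|^{1+1/k}$ for all large $n$, and the minimizing atom $x_\omega^{w(\omega,n,x)}$ realizing $\delta_n$ must then be an endpoint of $I_\omega^{v(\omega,n,x)}$. The paper shows this nearest atom stabilizes in $n$: if $x_\omega^{w(\omega,n+1,x)}\neq x_\omega^{w(\omega,n,x)}$, a full child interval $I_\omega^{v(\omega,n,x)s}$ sits between them, of length $|I_\omega^{v(\omega,n,x)}|^{1+o(1)}\ge|I_\omega^{v(\omega,n,x)}|^{1+1/(2k)}$ by the same one-step ratio estimate you invoke, while both candidates lie within $|I_\omega^{v(\omega,n,x)}|^{1+1/k}$ of $x$---a contradiction. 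Hence $\delta_n$ is eventually constant and positive only if $x\in\Xi_\omega$.

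Your direct approach sidesteps the stabilization argument by exploiting the structural fact that every interior boundary of the level-$(n+1)$ partition is some $x_\omega^{us}$ with $|u|\le n$; this immediately gives $\delta_n\le|I_\omega^{v^{(n+1)}}|$, and the ratio estimate finishes. The bookkeeping you flag is indeed routine: for adjacent $v,\tilde v\in\Sigma_{\omega,n+1}$ with longest common prefix $u$ of length $k\le n$, the word $v$ is the rightmost level-$(n+1)$ descendant of $uv_k$, so the shared boundary is $F_{\mu_\omega}(M_\omega^{uv_k})=x_\omega^{uv_k}$ with $v_k\in S'(\omega,u,1)$. Your route is more direct and in fact yields the stronger conclusion $\lim_n\widehat\xi(\omega,n,x)=1$ rather than just the $\liminf$; the paper's route trades the endpoint bookkeeping for the short stabilization step.
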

\begin{proof}
	We just need to prove that for any $k\in \mathbb{Z}^+$,
	$$\{x\in[0,1]\setminus\Xi_\omega:\widehat\xi(\omega,x)>1+1/k\}=\emptyset.$$
	
	For any $x\in [0,1]\setminus\Xi_\omega$ such that $\widehat\xi(\omega,x)>1+1/k$, there exists $N(x)\in \mathbb{Z}^+$ such that for any $n\geq N(x)$ one has
	$$\inf\{|x-x_{\omega}^{vs}|:|v|\leq n, s\in S'(\omega,v,1)\}\leq |I_{\omega}^{v(\omega,n,x)}|^{1+1/k}.$$	
	Furthermore, the infimum must be  attained at a point $x_{\omega}^{vs}$ which is in the closure of $I_{\omega}^{v(\omega,n,x)}$. We denote  $vs$ by $w(\omega,n,x)$. We just need to prove that  $x$ is the point $x_{\omega}^{w(\omega,n,x)}$ for $n$ large enough. And $x_{\omega}^{w(\omega,n,x)}\in \Xi_\omega$ yields the lemma.
	
	The choice of $x^{w(\omega,n+1,x)}$ must be made in  $\{x^{v(\omega,n+1,x)s}:\ s\in S'(\omega,v(\omega,n+1,x),1)\}\cup\{x^{w(\omega,n,x)}\}$. Otherwise it is easily seen  that it is in  contradiction with  the choice  of $w(\omega,n,x)$ and $x\in I_{\omega}^{v(\omega,n+1,x)}$.
	
	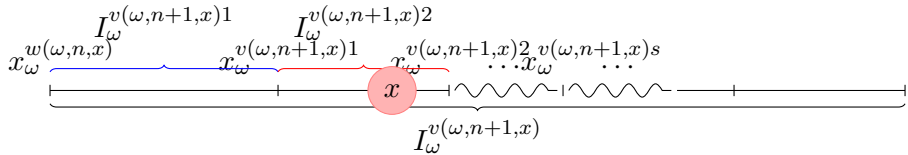
\begin{figure}[!htb]
		\begin{tikzpicture}[scale=0.75]
		\draw[-] (0,0) -- (7,0);
		\draw[snake=coil,segment aspect=0] (7.1,0) -- (8.9,0);
		\node (b0) at (8,0.4)   {$\cdots$};
		\draw[snake=coil,segment aspect=0] (9.1,0) -- (10.9,0);
		\node (b0) at (10,0.4)   {$\cdots$};
		\draw[-] (11,0) -- (15,0);
		\draw[snake=brace,mirror snake,raise snake=5pt,]   (0,0) -- (15,0);
		\node (b1) at (7.5,-0.8)   {$I_{\omega}^{v(\omega,n+1,x)}$};
		\draw[-] (0,0.1) -- (0,-0.1);
		\node (b2) at (0.2,0.6)   {$x_{\omega}^{w(\omega,n,x)}$};
		\draw[-] (4,0.1) -- (4,-0.1);
		\node (b3) at (4.2,0.6)   {$x_{\omega}^{v(\omega,n+1,x)1}$};
		\draw[-] (7,0.1) -- (7,-0.1);
		\node (b4) at (7.2,0.6)   {$x_{\omega}^{v(\omega,n+1,x)2}$};
		\draw[-] (9,0.1) -- (9,-0.1);
		\node (b5) at (9.5,0.6)   {$x_{\omega}^{v(\omega,n+1,x)s}$};
		\draw[-] (12,0.1) -- (12,-0.1);
		\draw[-] (15,0.1) -- (15,-0.1);
		\draw[snake=brace,snake,raise snake=7pt,blue]   (0,0) -- (4,0);
		\node (b6) at (2,1.2)   {$I_{\omega}^{v(\omega,n+1,x)1}$};
		\draw[snake=brace,snake,raise snake=7pt,red]   (4,0) -- (7,0);
		\node (b7) at (5.5,1.2)   {$I_{\omega}^{v(\omega,n+1,x)2}$};
		\node at (6,0) [circle,draw=red!50,fill= red!30] {$x$};
		\end{tikzpicture}
		\caption{The choice for $w(\omega,n+1,x)$}
		\label{fig9}
	\end{figure}
	
	We have
	\begin{eqnarray*}
		&&\inf\{|x-x_{\omega}^{vs}|:|v|\leq n+1, s\in S'(\omega,v,1)\}\\
		&=&|x_{\omega}^{w(\omega,n+1,x)}-x|\leq |I_{\omega}^{v(\omega,n+1,x)}|^{1+1/k}
		\leq |I_{\omega}^{v(\omega,n,x)}|^{1+1/k} .
	\end{eqnarray*}
	Now suppose that $x^{w(\omega,n+1,x)}\neq x^{w(\omega,n,x)}$. On the one hand, since $x^{w(\omega,n,x)}$ must be an endpoint of $I_{\omega}^{v(\omega,n+1,x)}$, there exists $s\in S(\omega,v(\omega,n+1,x),1)$ such that
	$$|I_{\omega}^{v(\omega,n,x)s}|\le |x_{\omega}^{w(\omega,n+1,x)}-x_{\omega}^{w(\omega,n,x)}|\leq |x_{\omega}^{w(\omega,n+1,x)}-x|+|x_{\omega}^{w(\omega,n,x)}|\leq 2|I_{\omega}^{v(\omega,n,x)}|^{1+1/k},$$
	and on the other hand,  from ~\eqref{cphi} and proposition~\ref{for n}, it is easy to prove that for $n$ large enough we have
	$$|I_{\omega}^{v(\omega,n,x)s}|=|I_{\omega}^{v(\omega,n,x)}|\cdot e^{o(\log(|I_{\omega}^{v(\omega,n,x)}|))}\geq |I_{\omega}^{v(\omega,n,x)}|^{1+\frac{1}{2k}}> 4|I_{\omega}^{v(\omega,n,x)}|^{1+1/k},$$
	which yields a the contradiction. Thus, $x^{w(\omega,n+1,x)}=x^{w(\omega,n,x)}$ for $n$ large enough, i.e. it is the point $x$.
\end{proof}

\begin{lemma}\label{lemAR2} For $\mathbb P$-a.e. $\omega\in \Omega$, for any $x\in[0,1]\setminus\Xi_\omega$, we have
	$\du(\nu_{\omega},x)\geq \liminf_{n\to \infty}\frac{\log \nu_{\omega}(\widering{I}_{\omega}^{v(\omega,n,x)})}{\log |I_{\omega}^{v(\omega,n,x)}|},$ where $v(\omega,n,x)$ is defined after definition~\ref{def 4.1} in section~\ref{section:Pointwise behavior} .
\end{lemma}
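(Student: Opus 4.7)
The plan is to use Lemma~\ref{lem AR=1} ($\widehat\xi(\omega,x)=1$) to exhibit, for each $\varepsilon>0$, a subsequence of radii $r_n\to 0$ along which $B(x,r_n)\subset \widering I_\omega^{v(\omega,n,x)}$ with $r_n$ only polynomially smaller than $|I_\omega^{v(\omega,n,x)}|$. The inclusion will give $\nu_\omega(B(x,r_n))\le \nu_\omega(\widering I_\omega^{v(\omega,n,x)})$, and a logarithmic comparison followed by passage to $\limsup$ and $\varepsilon\to 0$ will conclude.

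More precisely, fix $x\in(0,1)\setminus\Xi_\omega$ (the corner cases $x\in\{0,1\}$ are handled by one-sided variants of the argument) and $\varepsilon>0$, and write $v_n=v(\omega,n,x)$ and $I_\omega^{v_n}=[a_n,b_n)$. The preparatory observation is that, for $n$ sufficiently large, both $a_n$ and $b_n$ belong to $\Xi_\omega^{(n)}:=\{x_\omega^{vs}:|v|\le n,\,s\in S'(\omega,v,1)\}$. Climbing the tree of refining partitions, the right endpoint $b_n=F_{\mu_\omega}(M_\omega^{v_n})$ coincides with $x_\omega^{v_n|_k}$ at the smallest level $k\le n$ for which $v_n|_k$ admits a right sibling in $\Sigma_{\omega,k}$; such a level $k$ must exist for $n$ large because $b_n\to x<1$ (since $|I_\omega^{v_n}|\to 0$ thanks to $c_\phi>0$, see proposition~\ref{for n}). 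A symmetric argument handles $a_n$, provided $x>0$.

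By Lemma~\ref{lem AR=1}, $\widehat\xi(\omega,x)=1$, so one can pick a subsequence $n_k\to\infty$ with $\widehat\xi(\omega,n_k,x)\le 1+\varepsilon$, i.e.
\[
\inf\bigl\{|x-x_\omega^{vs}|:|v|\le n_k,\, s\in S'(\omega,v,1)\bigr\}\ge |I_\omega^{v_{n_k}}|^{1+\varepsilon}.
\]
Combined with the preparatory observation, this forces $\min(x-a_{n_k},b_{n_k}-x)\ge |I_\omega^{v_{n_k}}|^{1+\varepsilon}$, whence $B(x,r_{n_k})\subset \widering I_\omega^{v_{n_k}}$ for $r_{n_k}:=|I_\omega^{v_{n_k}}|^{1+\varepsilon}$. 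Since $\log r_{n_k}=(1+\varepsilon)\log|I_\omega^{v_{n_k}}|<0$, taking $\log$ of the resulting mass inequality and dividing gives
\[
\frac{\log\nu_\omega(B(x,r_{n_k}))}{\log r_{n_k}}\ge \frac{1}{1+\varepsilon}\cdot\frac{\log\nu_\omega(\widering I_\omega^{v_{n_k}})}{\log|I_\omega^{v_{n_k}}|}.
\]
Because $r_{n_k}\to 0$ and $\limsup_k a_{n_k}\ge \liminf_k a_{n_k}\ge \liminf_n a_n$ (writing $a_n$ for the right-hand ratio above), passing to $\limsup$ on the left and then letting $\varepsilon\to 0$ yields the desired bound $\du(\nu_\omega,x)\ge \liminf_n\frac{\log\nu_\omega(\widering I_\omega^{v_n})}{\log|I_\omega^{v_n}|}$.

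The only non-trivial step is the preparatory tree-climbing claim that $\{a_n,b_n\}\subset \Xi_\omega^{(n)}$ for large $n$; this amounts to a careful bookkeeping of the endpoints of the partitions $\mathcal{F}_\omega^k$, combined with the elementary fact that $b_n\to x$ and $a_n\to x$ (both consequences of $|I_\omega^{v_n}|\to 0$).
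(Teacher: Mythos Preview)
Your proof is correct and follows essentially the same route as the paper: both invoke Lemma~\ref{lem AR=1} to extract a subsequence along which $\widehat\xi(\omega,n_k,x)$ is close to $1$, use this to produce radii $r_{n_k}$ only polynomially smaller than $|I_\omega^{v_{n_k}}|$ with $B(x,r_{n_k})\subset\widering I_\omega^{v_{n_k}}$, and then pass to the $\limsup$. The paper uses the variable exponent $\widehat\xi(\omega,n_k,x)+1/n_k$ where you use the fixed $1+\varepsilon$, but this is cosmetic. Your preparatory tree-climbing observation (that the endpoints of $I_\omega^{v_n}$ lie in $\Xi_\omega^{(n)}$ once $n$ is large enough to force $0<a_n<b_n<1$) makes explicit precisely the step the paper compresses into the phrase ``by definition of $\widehat\xi(\omega,n_k,x)$''; without that observation the inclusion $B(x,r)\subset\widering I_\omega^{v_{n_k}}$ does not follow directly from the infimum defining $\widehat\xi$.
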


\begin{proof} Suppose that the conclusion of lemma \ref{lem AR=1} holds for $\omega$. If $x\in[0,1]\setminus\Xi_\omega$,  there exists a subsequence $\{n_k\}_{k\in \mathbb{Z}^+}$ such that $\widehat\xi(\omega,n_k,x)\to 1$ as $k\to \infty$.
	Now,
	\begin{eqnarray*}
		\limsup_{r\to 0}\dfrac{\log \nu_{\omega}(B(x,r))}{\log r}
		&\geq&\limsup_{k\to \infty}\dfrac{\log \nu_{\omega}(B(x,|I_{\omega}^{v(\omega,n_k,x)}|^{\widehat\xi(\omega,n_k,x)+1/n_k}))}{\log |I_{\omega}^{v(\omega,n_k,x)}|^{\widehat\xi(\omega,n_k,x)+1/n_k}}\\
		&\geq& \limsup_{k\to \infty}\dfrac{\log \nu_{\omega}(\widering{I}_{\omega}^{v(\omega,n_k,x)})}{\log |I_{\omega}^{v(\omega,n_k,x)}|}\geq \liminf_{n\to \infty}\frac{\log \nu_{\omega}(\widering{I}_{\omega}^{v(\omega,n,x)})}{\log |I_{\omega}^{v(\omega,n,x)}|}.
	\end{eqnarray*}
	The second inequality follows from the fact that $\widehat\xi(\omega,n_k,x) \to 1$ as $k\to \infty$ and  
	
	$$B(x,|I_{\omega}^{v(\omega,n_k,x)}|^{\widehat\xi(\omega,n_k,x)+1/n_k})\subset \widering{I}_{\omega}^{v(\omega,n_k,x)}$$ by definition of $\widehat\xi(\omega,n_k,x)$. 	
\end{proof}

\begin{proposition}
	For $\mathbb P$-a.e. $\omega\in\Omega$,
	\begin{enumerate}
		\item\   if $d\in [\mathcal{T}'(+\infty),\mathcal{T}'(t_0-)]$, then $\dim_H(\{x\in [0,1]: \du(\nu_{\omega},x)\leq d\})\leq \mathcal{T}^{*}(d);$
		\item \ if $x\in [0,1]\setminus \Xi'_\omega$ then $\du(\nu_{\omega},x)\ge  \mathcal{T}'(+\infty)$.
		
		\item\
		$\overline{E}(\nu_{\omega},0)=E(\nu_{\omega},0)
		=\Xi'_\omega$,	so $\dim_H(\overline{E}(\nu_{\omega},0))=\dim_H(E(\nu_{\omega},0))=0.$
		
	\end{enumerate}
\end{proposition}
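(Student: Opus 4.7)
The plan is to handle (i) via Lemma~\ref{lemAR2} combined with a standard covering/Chebyshev argument, to obtain (ii) as a consequence of a uniform Birkhoff-ratio lower bound derived from the identity $P(q\Psi-\mathcal{T}(q)\Phi)=0$, and then to deduce (iii) as an immediate corollary of (ii) once one checks that $\mathcal{T}'(+\infty)>0$.

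For (i), I would first show that for any $x\in[0,1]\setminus\Xi_\omega$ one has $\du(\nu_\omega,x)\ge \alpha_\omega(x)$: Proposition~\ref{for n} yields $\nu_\omega(\widering{I}_\omega^v)\le|X_\omega^v|\le\exp(\widetilde{\Psi}(\omega,v)+o(|v|))$ and $\log|I_\omega^v|=\widetilde{\Phi}(\omega,v)+o(|v|)$, so Lemma~\ref{lemAR2}, after dividing by the negative denominator $\log|I_\omega^v|$, produces $\du(\nu_\omega,x)\ge\liminf_n\alpha_\omega^n(x)=\alpha_\omega(x)$. Since $\Xi_\omega$ is countable, this reduces the problem to proving $\dim_H\{x:\alpha_\omega(x)\le d\}\le\mathcal{T}^*(d)$. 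The latter set is covered at scale $N$ by $\{I_\omega^v:n\ge N,\,v\in\Sigma_{\omega,n},\,\alpha_\omega^v\le d+\epsilon\}$. For $d\in[\mathcal{T}'(+\infty),\mathcal{T}'(t_0-)]$ pick $q\ge t_0\ge 0$ with $d\in[\mathcal{T}'(q+),\mathcal{T}'(q-)]$, so $\mathcal{T}^*(d)=qd-\mathcal{T}(q)$. On $\{\alpha_\omega^v\le d+\epsilon\}$ one has $\widetilde{\Psi}(\omega,v)\ge(d+\epsilon)\log|I_\omega^v|$, and since $q\ge 0$, the Chebyshev-type bound $\mathbf{1}\le\exp(q(\widetilde{\Psi}-(d+\epsilon)\log|I_\omega^v|))$ converts the Hausdorff sum at exponent $s=q(d+\epsilon)-\mathcal{T}(q)+\eta$ (with $\eta>0$) into a partition-function estimate for the pressure-zero potential $q\Psi-\mathcal{T}(q)\Phi$, times an extra factor $|I_\omega^v|^\eta\le\exp(-\eta c_\phi n/2)$; geometric summability over $n\ge N$ yields $\mathcal{H}^s_{\delta_N}\to 0$, hence $\dim_H\le s$, and letting $\eta,\epsilon\to 0$ produces $\mathcal{T}^*(d)$.

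For (ii), the central observation is that, for every $q>0$, every $v\in\Sigma_{\omega,n}$ and every $\vl\in[v]_\omega$, Proposition~\ref{for n}(ii) applied to $q\Psi-\mathcal{T}(q)\Phi$ together with $\widetilde{\mu}_\omega^{q\Psi-\mathcal{T}(q)\Phi}([v]_\omega)\le 1$ forces
\[qS_n\Psi(\omega,\vl)-\mathcal{T}(q)S_n\Phi(\omega,\vl)\le o(n),\]
with an $o(n)$ uniform in $v,\vl$. Dividing by $q|S_n\Phi|\ge qc_\phi n/2$ and then sending $n\to\infty$ followed by $q\to\infty$ (so that $\mathcal{T}(q)/q\to\mathcal{T}'(+\infty)$) yields $\liminf_n S_n\Psi(\omega,\vl)/S_n\Phi(\omega,\vl)\ge\mathcal{T}'(+\infty)$ uniformly in $\vl\in\Sigma_\omega$. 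For $x\in[0,1]\setminus\Xi_\omega$, Lemma~\ref{lemAR2} combined with \eqref{alphaomega} then gives $\du(\nu_\omega,x)\ge\alpha_\omega(x)\ge\mathcal{T}'(+\infty)$. For $x\in\Xi_\omega\setminus\Xi'_\omega$ one has $\nu_\omega(\{x\})=0$, and $x$ lies on the common boundary of two adjacent cylinders $I_\omega^{vs}$ and $I_\omega^{v\widetilde{s}}$ of arbitrarily large depth, so any small ball $B(x,r)$ is contained in the union of two neighbouring cylinders $I_\omega^{v'_1}\cup I_\omega^{v'_2}$ at depth $n(r)\to\infty$; the bound $\nu_\omega(B(x,r))\le|X_\omega^{v'_1}|+|X_\omega^{v'_2}|$ together with the Birkhoff-ratio inequality again yields $\du(\nu_\omega,x)\ge\mathcal{T}'(+\infty)$.

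Assertion (iii) is then immediate: $\Xi'_\omega\subset E(\nu_\omega,0)\subset\overline{E}(\nu_\omega,0)$ is trivial since every atom satisfies $\dl(\nu_\omega,x)=\du(\nu_\omega,x)=0$, while (ii) shows that any $x\notin\Xi'_\omega$ has $\du(\nu_\omega,x)\ge\mathcal{T}'(+\infty)>0$, forcing $\overline{E}(\nu_\omega,0)=E(\nu_\omega,0)=\Xi'_\omega$, whose Hausdorff dimension is zero because $\Xi'_\omega$ is countable; the strict positivity $\mathcal{T}'(+\infty)>0$ follows from the variational representation $\mathcal{T}'(+\infty)=\sup_\mu\int\Psi\,d\mu/\int\Phi\,d\mu$ combined with $c_\phi,c_\psi>0$, which makes both integrals strictly negative for every invariant~$\mu$. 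The main obstacle I foresee is the treatment of the exceptional points $x\in\Xi_\omega\setminus\Xi'_\omega$ in (ii): Lemma~\ref{lemAR2} is not available for them, and in the random subshift context they are genuinely present (they are absent in the cookie-cutter setting of~\cite{BS2009}), so the geometry of $B(x,r)$ has to be reworked by hand via the two neighbouring cylinders, with the diameter/mass estimates re-derived accordingly.
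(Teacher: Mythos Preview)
Your proposal is correct and, for parts (i) and (iii), follows essentially the paper's proof: the same covering via the intervals $I_\omega^v$ selected by the condition $\alpha_\omega^v\le d+\epsilon$ (equivalently $\nu_\omega(\widering I_\omega^v)\ge |I_\omega^v|^{d+\epsilon}$), followed by the Chebyshev/pressure estimate with $q\ge 0$ and the potential $q\Psi-\mathcal T(q)\Phi$.

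For part (ii) there is a genuine, if modest, difference in how the key uniform lower bound is obtained. The paper does \emph{not} prove the Birkhoff-ratio inequality directly; instead it reruns the Hausdorff-measure computation of (i) at any $d<\mathcal T'(+\infty)$, observes that then $\mathcal T^*(d)<0$ so the corresponding set must be empty, and concludes that for every $x$ one has $\liminf_n \frac{\log\nu_\omega(\widering I_\omega^{v(\omega,n,x)})}{\log|I_\omega^{v(\omega,n,x)}|}\ge \mathcal T'(+\infty)$ (and the analogue with the mirrored half-open intervals $I'_\omega^v$). Your route---bounding $qS_n\Psi-\mathcal T(q)S_n\Phi\le o(n)$ from $\widetilde\mu_\omega^{q\Psi-\mathcal T(q)\Phi}([v]_\omega)\le 1$ and sending $n\to\infty$ then $q\to\infty$---yields the equivalent Birkhoff-ratio bound more directly and is arguably cleaner; the paper's approach has the virtue of being a pure corollary of (i). The subsequent handling of $x\in\Xi_\omega\setminus\Xi'_\omega$ is the same in both: the paper formalises your ``two neighbouring cylinders'' by introducing the mirrored partition $I'_\omega^v=(\,\cdot\,,\,\cdot\,]$ and the word $v'(\omega,n,x)$, then writes $B(x,r)\subset \widering I_\omega^{v(\omega,n,x)}\cup\widering{I'}_\omega^{v'(\omega,n',x)}\cup\{x\}$ with $\nu_\omega(\{x\})=0$. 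Your sketch ``$\nu_\omega(B(x,r))\le |X_\omega^{v'_1}|+|X_\omega^{v'_2}|$'' is correct once this inclusion is made precise, and the comparability $|I_\omega^{v'_i}|=r^{1+o(1)}$ (needed to pass from the cylinder scale to $r$) follows from $|I_\omega^{v|_{n+1}}|/|I_\omega^{v|_n}|\ge \exp(-o(n))$, exactly as in the paper's implicit use of ``the choices of $n$ and $n'$''.
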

\begin{proof}$(i)$ For any $d\in [\mathcal{T}'(+\infty),\mathcal{T}'(t_0-)]$, for any $\varepsilon>0$, there exists $q\geq 0$ such that $\mathcal{T}^*(d)\geq qd-\mathcal{T}(q)-\varepsilon/2$. Choose $\epsilon>0$ such that $q\epsilon\leq \varepsilon/4$. Due to the previous lemma, for any $N\in \N$ we have
	\begin{eqnarray*}
		\{x\in [0,1]\setminus \Xi_\omega: \du(\nu_{\omega},x)\leq d\}&\subset&\left\{x\in [0,1]:\liminf_{n\to \infty}\frac{\log \nu_{\omega}(\widering{I}_{\omega}^{v(\omega,n,x)})}{\log |I_{\omega}^{v(\omega,n,x)}|}\leq d\right\}\\
		&\subset&\bigcup_{n\geq N}\bigcup_{v\in \Sigma_{\omega,n},\nu_{\omega}(\widering{I}_{\omega}^{v})\geq |I_{\omega}^{v}|^{d+\epsilon}}I_{\omega}^{v}.
	\end{eqnarray*}
	For any $\delta>0$, for $N$ large enough, and $v\in\Sigma_{\omega,n}$, one has $|I_{\omega}^{v}|<\delta$. Choosing $h=\mathcal{T}^*(d)+\varepsilon$ we get for $N$ large enough,
	\begin{multline*}
	\mathcal{H}_{\delta}^h(\{x\in [0,1]\setminus \Xi_\omega: \du(\nu_{\omega},x)\leq d\})\leq \sum_{n\geq N}\sum_{v\in \Sigma_{\omega,n}:\, \nu_\omega(\widering I_{\omega}^{v})\geq |I_{\omega}^{v}|^{d+\epsilon}}|I_{\omega}^{v}|^{\mathcal{T}^*(d)+\varepsilon}\\
	\leq \sum_{n\geq N}\sum_{v\in \Sigma_{\omega,n}:\, \nu_\omega(\widering I_{\omega}^{v})\geq |I_{\omega}^{v}|^{d+\epsilon}}|I_{\omega}^{v}|^{qd-\mathcal{T}(q)+\varepsilon/2}
	\leq\sum_{n\geq N}\sum_{v\in \Sigma_{\omega,n}}|I_{\omega}^{v}|^{-\mathcal{T}(q)+\varepsilon/2-q \epsilon }(\nu_\omega(\widering{I}_{\omega}^{v}))^{q}\\
	\leq \sum_{n\geq N}\sum_{v\in \Sigma_{\omega,n}}|I_{\omega}^{v}|^{-\mathcal{T}(q)+\varepsilon/4}(\nu_\omega(\widering{I}_{\omega}^{v}))^{q}
	\leq \sum_{n\geq N} \exp(n P(q\Psi-\mathcal{T}(q)\Phi)-\frac{nc_{\Phi}\varepsilon}{8})\\
	\leq \sum_{n\geq N} \exp(-\frac{nc_{\Phi}\varepsilon}{8}).
	\end{multline*}
	%
	Here we used the fact that $\nu_\omega(\widering{I}_{\omega}^{v})\leq |X_{\omega}^v|\leq |U_{\omega}^v|\leq\exp(S_n\Psi(\omega,\vl)+o(n))$ for any $\vl\in [v]_{\omega}$ and $v\in \Sigma_{\omega,*}$.
	
	Letting $N$ go to $\infty$   we get $\mathcal{H}^h_\delta(\{x\in [0,1]\setminus \Xi_\omega: \du(\nu_{\omega},x)\leq d\})=0$ for any $\delta>0$, so $\mathcal{H}^h(\{x\in [0,1]\setminus \Xi_\omega: \du(\nu_{\omega},x)\leq d\})=0$. This holds for any $h>-\mathcal{T}^*(d)$, so $\dim_H \{x\in [0,1]\setminus \Xi_\omega: \du(\nu_{\omega},x)\leq d\}\leq \mathcal{T}^*(d).$ Since $\Xi_\omega$ is countable we get the desired conclusion.
	
	\medskip
	
	$(ii)$
	%
	For any $v\in \Sigma_{\omega,\ast}$ define 
	$${I'}_{\omega}^v:=(F_{\mu_{\omega}}(m_{\omega}^v),F_{\mu_{\omega}}(M_{\omega}^v)]
	=F_{\mu_{\omega}}(X_{\omega}^v)\setminus\{F_{\mu_{\omega}}(m_{\omega}^v)\}.$$
	Then for $x\in (0,1]$ and $n\geq 1$, let ${v'}(\omega,n,x)$ be the unique element $v$ in $\Sigma_{\omega,n}$ such that $x\in {I'}_{\omega}^{v}$. If $x=0$,  $v'(\omega,n,0)$ is the unique $v\in\Sigma_{\omega,n}$ such that $1\in \overline{{I'}_{\omega}^{v}}$.
	In $(i)$ we proved that $\{x\in [0,1]:\liminf_{n\to \infty}\frac{\log \nu_{\omega}(\widering{I}_{\omega}^{v(\omega,n,x)})}{\log |I_{\omega}^{v(\omega,n,x)}|}\leq d\}=\emptyset$ for $d<\mathcal{T}'(+\infty)$.
	Using the same arguments we can also prove that 
	$\{x\in [0,1]:\liminf_{n\to \infty}\frac{\log \nu_{\omega}(\widering{{I'}}_{\omega}^{{v'}(\omega,n,x)})}{\log |{I'}_{\omega}^{{v'}(\omega,n,x)}|}\leq d\}=\emptyset$ for $d<\mathcal{T}'(+\infty)$.
	Thus,  for any $x\in [0,1]$,
	\begin{equation}\label{37}
	\min\left ( \liminf_{n\to \infty}\frac{\log \nu_{\omega}(\widering{I}_{\omega}^{v(\omega,n,x)})}{\log |I_{\omega}^{v(\omega,n,x)}|},  \liminf_{n\to \infty}\frac{\log \nu_{\omega}(\widering{{I'}}_{\omega}^{{v'}(\omega,n,x)})}{\log |{I'}_{\omega}^{{v'}(\omega,n,x)}|}\right )\geq \mathcal{T}'(+\infty).
	\end{equation}
	Now we come to the proof of the assertion. From the proof of item 1, we just need to deal with the set $\Xi_\omega\setminus \Xi'_\omega$. For $x\in \Xi_\omega\setminus \Xi'_\omega$ and $r>0$ small enough, there exist $n,n'\in \mathbb{N}$ such that $|I_{\omega}^{v(\omega,n,x)}|>r,\ |{I'}_{\omega}^{{v'}(\omega,n',x)}|>r$ and $|I_{\omega}^{v(\omega,n+1,x)}|\leq r,\ |{I'}_{\omega}^{{v'}(\omega,n'+1,x)}|\leq r$.
	Now we have $B(x,r)\subset \widering{I}_{\omega}^{v(\omega,n,x)}\cup \widering{I'}_{\omega}^{v'(\omega,n',x)}\cup\{x\}$.
	Since $\nu_{\omega}(\{x\})=0$, we have 
	$$\nu_{\omega}(B(x,r))\leq \nu_{\omega}(\widering{I}_{\omega}^{v(\omega,n,x)})+\nu_{\omega}({I'}_{\omega}^{{v'}(\omega,n',x)}),$$
	and it follows from \eqref{37} and the choices of $n$ and  $n'$ that  $\du(\nu_{\omega},x)\ge  \mathcal{T}'(+\infty)$.

	%
	%
	%
	%
	%
	\medskip
	$(iii)$ It clearly follows from $(ii)$.\end{proof}

\begin{theorem}
	For $\mathbb P$-a.e. $\omega\in \Omega$, for any $d\in [\mathcal{T}'(+\infty),\mathcal{T}'(-\infty)]$, we have
	$$\dim_H E(\nu_{\omega},d)=\dim_H \overline{E}(\nu_{\omega},d)=\mathcal{T}^*(d).$$
\end{theorem}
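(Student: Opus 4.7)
The lower bound $\dim_H E(\nu_\omega,d) \geq \mathcal{T}^*(d)$ for every $d \in [\mathcal{T}'(+\infty),\mathcal{T}'(-\infty)]$ is already furnished by proposition~\ref{lower bound-1}, and transfers immediately to $\overline E(\nu_\omega,d) \supset E(\nu_\omega,d)$. So the task reduces to proving the matching upper bound $\dim_H \overline E(\nu_\omega,d) \leq \mathcal{T}^*(d)$ on the whole interval; then the chain $\mathcal{T}^*(d) \leq \dim_H E(\nu_\omega,d) \leq \dim_H \overline E(\nu_\omega,d) \leq \mathcal{T}^*(d)$ closes both identities.

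I would split the interval according to the sign of the Legendre parameter $q$ with $\mathcal{T}'(q)=d$. For $d \in [\mathcal{T}'(+\infty),\mathcal{T}'(0-)]$ (which contains $[\mathcal{T}'(+\infty),\mathcal{T}'(t_0-)]$), the optimal $q$ is nonnegative and item~(i) of the preceding proposition (whose proof in fact works on this larger interval, since it only uses $q\geq 0$) gives $\dim_H\{x:\du(\nu_\omega,x)\leq d\}\leq \mathcal{T}^*(d)$, hence $\dim_H\overline E(\nu_\omega,d)\leq \mathcal{T}^*(d)$ via $\overline E(\nu_\omega,d)\subset\{\du\leq d\}$. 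For $d \in [\mathcal{T}'(t_0-),\mathcal{T}'(-\infty)]$, the multifractal formalism (theorem~\ref{multifractal inverse}(1)) combined with $E(\nu_\omega,d)\subset \underline E(\nu_\omega,d)$ already yields $\dim_H E(\nu_\omega,d)\leq \widetilde{\mathcal{T}}^*(d)=\mathcal{T}^*(d)$. So the only genuinely new piece is the upper bound for $\overline E(\nu_\omega,d)$ on $[\mathcal{T}'(0+),\mathcal{T}'(-\infty)]$, where the optimal $q$ is nonpositive.

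To handle this range I would develop a dual of item~(i). The starting point is the counterpart of lemma~\ref{lemAR2},
$$\du(\nu_\omega,x) \leq \limsup_{n\to\infty} \frac{\log\nu_\omega(\widering{I}^{v(\omega,n,x)}_\omega)}{\log |I^{v(\omega,n,x)}_\omega|}\qquad (x\in[0,1]\setminus\Xi_\omega),$$
proved by the same comparison of balls and intervals: when $|I^{v(\omega,n,x)}_\omega|\leq 2r$ one has $I^{v(\omega,n,x)}_\omega\subset B(x,2r)$, and $\widehat\xi(\omega,x)=1$ (lemma~\ref{lem AR=1}) lets one pass from radii to interval lengths. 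Any $x\in\overline E(\nu_\omega,d)\setminus\Xi_\omega$ then satisfies $\nu_\omega(I^{v(\omega,n,x)}_\omega)\leq |I^{v(\omega,n,x)}_\omega|^{d-\epsilon}$ for infinitely many $n$, so
$$\overline E(\nu_\omega,d)\setminus\Xi_\omega \;\subset\; \bigcap_{N\geq 1}\bigcup_{n\geq N}\bigcup_{\substack{v\in\Sigma_{\omega,n}\\ \nu_\omega(I_\omega^v)\leq |I_\omega^v|^{d-\epsilon}}} I_\omega^v.$$
Picking $q\leq 0$ with $\mathcal{T}'(q)=d$, I would then apply Markov's inequality in the reversed form $\mathbf 1_{\{X\leq 0\}}\leq e^{qX}$ (valid for $q\leq 0$) to $X = S_n\Psi-(d-\epsilon)S_n\Phi$. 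Using proposition~\ref{for n} to identify $|I_\omega^v|$ with $\exp(S_n\Phi)$ up to $e^{o(n)}$, the $(\mathcal{T}^*(d)+\varepsilon)$-Hausdorff sum becomes $\sum_v \exp(S_n(q\Psi-\mathcal{T}(q)\Phi)+(\varepsilon+q\epsilon)S_n\Phi+o(n))$. The Gibbs factor contributes $e^{o(n)}$ since $P(q\Psi-\mathcal{T}(q)\Phi)=0$, and the remaining factor gives exponential decay in $n$ thanks to~\eqref{cphi}, provided $\epsilon<\varepsilon/|q|$ so that $\varepsilon+q\epsilon>0$. Summing in $n$ gives $\dim_H\overline E(\nu_\omega,d)\leq\mathcal{T}^*(d)+\varepsilon$, and $\varepsilon\to 0$ finishes the argument.

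The main obstacle is the sign asymmetry of Markov's inequality: the liminf inequality of lemma~\ref{lemAR2} pairs with $q\geq 0$ and covers $d\leq\mathcal{T}'(0-)$, while the limsup inequality displayed above pairs with $q\leq 0$ and covers $d\geq\mathcal{T}'(0+)$; a direct argument at $d=\mathcal{T}'(0)$ with $q=0$ closes any residual gap when $\mathcal{T}$ is not differentiable at $0$. Apart from this bookkeeping no new machinery is needed beyond the tools of sections~\ref{section:The basic notations}--\ref{section:Pointwise behavior}, since the whole argument is a mirror image of the proof of the preceding proposition with $\liminf\leftrightarrow\limsup$, $\geq\leftrightarrow\leq$, and the sign of $q$ interchanged.
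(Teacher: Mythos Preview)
Your proof is correct, but you have worked much harder than necessary on the range $d\in[\mathcal{T}'(t_0-),\mathcal{T}'(-\infty)]$. The general inequality \eqref{MFI} recalled in the introduction bounds \emph{both} $\dim_H \El(\mu,d)$ and $\dim_H \overline E(\mu,d)$ by $\tau_\mu^*(d)$; it is not restricted to the lower level sets. Hence once $\tau_{\nu_\omega}=\widetilde{\mathcal T}$ is known, one has immediately $\dim_H \overline E(\nu_\omega,d)\le \tau_{\nu_\omega}^*(d)=\widetilde{\mathcal T}^*(d)=\mathcal T^*(d)$ on $[\mathcal{T}'(t_0-),\mathcal{T}'(-\infty)]$, with no further work. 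This is exactly how the paper closes the argument: item~(i) of the preceding proposition handles $d\in[\mathcal{T}'(+\infty),\mathcal{T}'(t_0-)]$, and \eqref{MFI} handles the rest.

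Your dual covering argument for $q\le 0$ is nonetheless valid. The displayed inequality $\du(\nu_\omega,x)\le \limsup_n \frac{\log\nu_\omega(\widering I_\omega^{x|_n})}{\log|I_\omega^{x|_n}|}$ does hold, though not quite for the reason you give: lemma~\ref{lem AR=1} is irrelevant here, and the clean route is via remark~\ref{limsupalpha} together with the two-sided estimate $\nu_\omega(\widering I_\omega^{v})=\exp(S_{|v|}\Psi+o(|v|))$ (the lower bound coming from the atom at $z_\omega^v$, remark~\ref{rem3}). With that in hand your Markov/Chebyshev computation with $q\le 0$ goes through. So the difference is purely one of economy: the paper invokes the universal upper bound \eqref{MFI} and avoids reproving a special case of it, while you re-derive that special case by hand. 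Your approach buys nothing extra here, but it is self-contained and would survive in settings where the $L^q$-spectrum is not yet computed.
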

\begin{proof}
	The expected lower bound for the Hausdorff dimensions of $E(\nu_{\omega},d)\subset\overline E(\nu_{\omega},d)$ was already obtained in the proof of proposition~\ref{lower bound-1}, while the upper bound was obtained in the previous proposition for $d\in [\mathcal{T}'(+\infty),\mathcal{T}'(t_0-)]$, and it follows from the multifractal formalism for $d\in [\mathcal{T}'(t_0-),\mathcal{T}'(-\infty)]$, since $\tau_{\nu_\omega}^*\le\mathcal T^*$.
\end{proof}

%

\end{document}